\newcommand{\tips}{{Stealth[length=3mm, width=2mm]}}
\newcommand{\tipss}{{Stealth[length=2.5mm, width=1.5mm]}}
\newcommand{\nc}{\newcommand}
\nc{\dmo}{\DeclareMathOperator}
\nc{\nt}{\newtheorem}
\newtheorem{theorem}{Theorem}[section]
\newtheorem{lemma}[theorem]{Lemma}
\newtheorem{corollary}[theorem]{Corollary}
\newtheorem{proposition}[theorem]{Proposition}
\newtheorem{claim}[theorem]{Claim}
\theoremstyle{definition}
\newtheorem{remark}[theorem]{Remark}
\newtheorem{definition}[theorem]{Definition}
\nc{\Mod}{\mbox{\rm{Mod}}}
\nc{\PMod}{\mbox{\rm{PMod}}}
\nc{\Homeo}{\mbox{\rm{Homeo}}}
\nc{\Diff}{\mbox{\rm{Diff}}}
\nc{\Aut}{\mbox{\rm{Aut}}}
\nc{\Out}{\mbox{\rm{Out}}}
\nc{\Hom}{\mbox{\rm{Hom}}}
\nc{\Ker}{\mbox{\rm{Ker}}}
\nc{\PSL}{\mbox{\rm{PSL}}}
\nc{\Inn}{\mbox{\rm{Inn}}}
\nc{\Curr}{\mbox{\rm{Curr}}}
\nc{\QI}{\mbox{\rm{QI}}}
\nc{\Isom}{\mbox{\rm{Isom}}}
\nc{\rk}{\mbox{\rm{rk}}}
\nc{\dis}{\mbox{\rm{d}}}
\nc{\bn}{B_n}
\nc{\pbn}{PB_n}
\nc{\cn}{A(B_{n-1})}
\nc{\hGa}{\hat{\Ga}}
\nc{\Q}{\mathbb{Q}}
\nc{\G}{\mathcal{G}}
\nc{\K}{\mathcal{K}}
\nc{\T}{\mathcal{T}}
\nc{\hG}{\hat{\G}}
\nc{\h}{\frak{H}}
\nc{\PML}{\mathcal{PML}}
\nc{\ML}{\mathcal{ML}}
\nc{\EL}{\mathcal{EL}}
\nc{\PMF}{\mathcal{PMF}}
\dmo{\fh}{H^1}
\nc{\invlim}{\ensuremath{\displaystyle{\lim_{\longrightarrow}\fh(\hG_i;\bz)}}}
\nc{\ilim}{\ensuremath{\displaystyle{\lim \fh(\hGa_i,\bz)}}}
\nc{\WH}{\frak{H}}
\nc{\C}{\mathcal{C}}
\dmo{\aut}{Aut}
\dmo{\comm}{Comm}
\nc{\Z}{\mathbb{Z}}
\nc{\I}{\mathcal{I}}
\nc{\R}{\mathbb{R}}
\nc{\BH}{\mathbb{H}}
\nc{\Pa}{\mathcal{P}}
\nc{\TT}{\mathbb{T}}
\nc{\BS}{\mathbb{S}}
\nc{\sys}{\mathrm{sys}}
\nc{\LL}{\mathcal L}
\nc{\M}{\mathcal M}
\nc{\Flat}{\mathrm{Flat}}
\nc{\poly}{\frak{m}}
\nc{\flow}{\psi}
\nc{\A}{{\mathcal A}}
\nc{\AC}{{\mathcal A\mathcal C}}
\nc{\HH}{\mathrm{H}}
\nc{\SL}{\mathrm{SL}}
\nc{\CAT}{\rm{CAT}}
\nc{\SO}{\rm{SO}}
\nc{\Aff}{\rm{Aff}}
\nc{\Stab}{{\rm{Stab}}}
\nc{\supp}{\rm{supp}}
\nc{\Vol}{{\rm{{Vol}}}}
\nc{\shrink}{\vspace{-.2cm}}
\nc{\Area}{{\rm{{Area}}}}
\nc{\SAff}{{\rm{SAff}}}
\nc{\diam}{{\rm{diam}}}
\nc{\hull}{{\frak H}}
\nc{\ssm}{\! \smallsetminus \!}
\nc{\ee}{\varepsilon}
\title[Pseudo-Anosov subgroups of Fibered $3$--manifolds]{Pseudo-Anosov subgroups of general fibered $3$--manifold groups}
\author[Leininger]{Christopher J. Leininger}
	\address{Department of Mathematics, Rice University, Houston, TX}
	\email{cjl12@rice.edu}
\author[Russell]{Jacob Russell}
	\address{Department of Mathematics, Rice University, Houston, TX}
	\email{jr92@rice.edu}
\begin{document}

\begin{abstract} 
	We show that finitely generated and purely pseudo-Anosov subgroups of fundamental groups of fibered 3--manifolds  with reducible monodromy are convex cocompact as subgroups of the mapping class group via the Birman exact sequence.  Combined with results of Dowdall--Kent--Leininger and Kent--Leininger--Schleimer, this establishes the result for the image of all such fibered 3--manifold groups in the mapping class group.
\end{abstract}

\maketitle


\section{Introduction}
Farb and Mosher defined convex cocompactness in $\Mod(S)$---the mapping class group of a finite type orientable surface $S$ of negative Euler characteristic---via analogy with convex cocompactness of Kleinian groups \cite{FMcc}. The convex cocompact subgroups of $\Mod(S)$ play an important role in the geometry of surface group extensions and surfaces bundles \cite{FMcc,hamenstadt_hyp_ext,mjsardar} and have a rich  dynamical and geometric structure \cite{hamenstadtcc,kentleiningershadows,kentleiningeruniform,DurhamTaylor,BBKLcc}. One basic property is that convex cocompact subgroups of $\Mod(S)$ are finitely generated and \emph{purely pseudo-Anosov}, that is, every infinite order element is pseudo-Anosov. In their introductory paper, Farb and Mosher asked if this pair of properties characterized convex cocompactness \cite[Question 1.5]{FMcc}. 
A ``no'' answer  can be used to produce a relatively simple example of a finitely generated group that is not hyperbolic, but has no Baumslag--Solitar subgroups, see \cite[\S8]{kentleiningerabc}. While, a ``yes'' answer would limit the possibilities for convex compact subgroups by requiring every finitely generated subgroup of such a group to again be convex cocompact. 
We establish that the answer to Farb and Mosher's question is yes for subgroups that are contained in the image of the fundamental groups of fibered 3--manifold groups inside the mapping class group, as we now explain.

Every orientable 3--manifold that fibers over a circle is the mapping torus $M_f = S \times [0,1]/(x,1) \sim (f(x),0)$ of an orientation preserving surface homeomorphism $f \colon S \to S$. Fixing a basepoint $z \in S \subset M_f$, the fundamental group, $\Gamma_f  = \pi_1M_f =\pi_1(M_f,z)$, splits as a semi-direct product, $\Gamma_f \cong \pi_1S \rtimes_{f_*} \mathbb Z$, where $f_*$ is an automorphism of $\pi_1S=\pi_1(S,z)$ induced by $f$.
If $\phi \colon \Gamma_f \to \mathbb Z$ is the homomorphism of this splitting, then we write  $\mu \colon \mathbb Z \to  \langle f \rangle < \Mod(S)$ for the monodromy homomorphism, so that $\mu (\phi(g)) = f^{\phi(g)}$ for $g \in \Gamma_f$. Setting $S^z = S \ssm \{z\}$, the monodromy is the descent of a homomorphism $\mu^z \colon \Gamma_f \to \Mod(S^z)$, with image in the finite index subgroup $ \Mod(S^z,z) < \Mod(S^z)$ consisting of isotopy classes of homeomorphisms that fix the $z$--puncture. These homomorphisms fit into a commutative diagram with the {\em Birman exact sequence}, defined by the homomorphism $\Phi_* \colon \Mod(S^z,z) \to \Mod(S)$ induced by the inclusion $\Phi \colon S^z \to S$: 

\[
\begin{tikzcd}
1 \ar[r] 		& \pi_1S \ar[r] \ar[d, "="] 	& \Gamma_f \ar[r, "\phi"] \ar[d,"\mu^z_{\phantom{\frac..}}"] 	& \mathbb Z \ar[r] \ar[d,"\mu"] 	&1 \\
1 \ar[r] 		& \pi_1S \ar[r]				& \Mod(S^z,z) \ar[r,"\Phi_*"]								& \Mod(S) \ar[r] 			&1.\\
\end{tikzcd}
\]

The  Nielsen--Thurston Classification Theorem \cite{FLP} says that every element of a mapping class group is either pseudo-Anosov, reducible, or finite order. Our main result proves that the answer to Farb and Mosher's question is ``yes'' for subgroups of $\mu^z(\Gamma_f)$ when $f$ is an infinite order, reducible mapping class.

\begin{theorem} \label{T:reducible} Suppose $\chi(S) < 0$ and $f \colon S \to S$ is a reducible, infinite order mapping class. For any subgroup $G< \Gamma_f$, the group $\mu^z(G) <\Mod(S^z)$ is convex cocompact if and only if it is finitely generated and purely pseudo-Anosov.
\end{theorem}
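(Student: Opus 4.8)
The forward direction is standard: convex cocompact subgroups of any mapping class group are finitely generated and purely pseudo-Anosov by Farb–Mosher, so I would dispose of that immediately and concentrate on the converse. So suppose $G < \Gamma_f$ with $\mu^z(G)$ finitely generated and purely pseudo-Anosov; the goal is convex cocompactness of $\mu^z(G)$ in $\Mod(S^z)$. My first move is to split into cases according to how $G$ interacts with the fiber subgroup $\pi_1 S \triangleleft \Gamma_f$. If $\phi(G) = 0$, then $G < \pi_1 S$ and $\mu^z(G)$ lands in the Birman kernel $\pi_1 S < \Mod(S^z, z)$; but that subgroup contains no pseudo-Anosov elements (point-pushing classes along filling curves are pseudo-Anosov, but a finitely generated purely pseudo-Anosov subgroup of $\pi_1 S$ — thought of via point-pushing — would have to be a surface subgroup all of whose nontrivial elements push along filling loops, and I would want to argue this forces $G$ trivial or invoke that such subgroups are exactly handled by Kent–Leininger–Schleimer). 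If $\phi(G) \ne 0$, then $G$ surjects onto a finite-index subgroup of $\mathbb Z$, and I get a short exact sequence $1 \to G \cap \pi_1 S \to G \to \mathbb Z \to 1$, exhibiting $G$ itself as the fundamental group of a fibered $3$–manifold (or a surface-by-cyclic group) with monodromy an appropriate power of $f_*$.

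The heart of the matter is the reducibility of $f$. Since $f$ is reducible and infinite order, fix an $f$–invariant multicurve $\mathcal C$ on $S$ (up to isotopy). Cutting along $\mathcal C$ decomposes $S$ into pieces permuted by $f$; on each piece $f$ acts (after a power) as either a pseudo-Anosov or a finite-order/identity map, with Dehn twisting along $\mathcal C$. The key point is to understand, for a pseudo-Anosov $g \in \mu^z(G)$, how the support of (a representative of) the underlying mapping class on $S^z$ must fill $S^z$: since $\mu^z(g)$ projects to $\mu(\phi(g)) = f^{\phi(g)}$ in $\Mod(S)$, and $f^{\phi(g)}$ is reducible fixing $\mathcal C$, the mapping class $\mu^z(g)$ cannot be pseudo-Anosov unless $\phi(g) = 0$ — wait, that is false, because point-pushing can promote a reducible class on $S$ to a pseudo-Anosov class on $S^z$ exactly when the pushing curve interacts correctly with $\mathcal C$. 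So the real structure I want to extract: an element $g \in G$ with $\phi(g) \ne 0$ has $\mu^z(g)$ pseudo-Anosov on $S^z$ if and only if the $z$–orbit data forces the reducing system $\mathcal C$ to be "destroyed" at the puncture, and I would make this precise using the theory of partial pseudo-Anosov / the canonical reduction system of $\mu^z(g)$ relative to that of $f^{\phi(g)}$, combined with Kra's theorem on when point-pushing maps are pseudo-Anosov.

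Given that structural analysis, the strategy is to reduce to the irreducible (stretch-factor) case handled by Dowdall–Kent–Leininger and Kent–Leininger–Schleimer, as the abstract promises. Concretely: I expect that the purely pseudo-Anosov hypothesis on $\mu^z(G)$, together with $f$ reducible, forces $G$ (or a finite-index subgroup) to be carried by a single piece of the decomposition — i.e., there is an $f$–invariant essential subsurface $Y \subset S$ with $z \in Y$ on which the induced monodromy $f|_Y$ is pseudo-Anosov, and $G$ maps into $\Mod(Y^z, z)$ — at which point the mapping torus of $f|_Y$ is a fibered $3$–manifold with pseudo-Anosov (hence irreducible) monodromy, and convex cocompactness of $\mu^z(G)$ follows by applying the known result to $Y$ and then observing that convex cocompactness in $\Mod(Y^z)$ implies convex cocompactness in $\Mod(S^z)$ via the (quasi-isometric) subsurface projection / the inclusion $\Mod(Y^z,z) \hookrightarrow \Mod(S^z)$ being a convex-cocompact-preserving embedding for this type of subgroup. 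If $G$ is \emph{not} carried by such a $Y$ — say its image interacts with several pieces or with the twisting along $\mathcal C$ — then I would argue $\mu^z(G)$ must contain a reducible infinite-order element (a power of a multitwist or a partial pseudo-Anosov supported off the relevant piece), contradicting pure pseudo-Anosovness.

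The main obstacle, I expect, is exactly this last dichotomy: showing that a finitely generated purely pseudo-Anosov $\mu^z(G)$ over a reducible monodromy is genuinely "concentrated" on a single pseudo-Anosov piece. Two sub-difficulties: first, the puncture $z$ may wander among the pieces under the $f$–action (if $f$ permutes the components of $S \ssm \mathcal C$ nontrivially), so one must pass to the finite-index subgroup $\mu^z{}^{-1}(\Mod(S^z,z)) \cap \ker(\text{permutation action})$ and track $z$ carefully; second, and more seriously, ruling out reducible elements requires controlling \emph{all} infinite-order elements of $\mu^z(G)$ simultaneously, not just a generating set — this is where I anticipate needing a genuine argument (perhaps via the action on the curve complex of $S^z$, showing that if $G$ is not carried by a pseudo-Anosov piece then some orbit map $G \to \mathcal C(S^z)$ fails to be a quasi-isometric embedding, i.e., there is a stable curve, which then produces the reducible element) rather than a short reduction. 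I would structure the paper so that the curve-complex / subsurface-projection estimate isolating the invariant pseudo-Anosov subsurface is the central lemma, with everything before it being setup on the Birman sequence and everything after it a citation to \cite{FMcc} and the Dowdall–Kent–Leininger / Kent–Leininger–Schleimer results.
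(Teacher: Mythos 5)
Your plan hinges on the claim that pure pseudo-Anosovness of $\mu^z(G)$ (with $f$ reducible) forces $G$, or a finite-index subgroup, to be carried by a single pseudo-Anosov piece $Y$ of the decomposition along the canonical reducing system $\alpha$. This is exactly backwards. If $G$ were carried by such a proper subsurface $Y \subsetneq S$ with $\partial Y \subset \alpha$, then every element of $G$ would preserve (the isotopy class of) $\partial Y$; since $\alpha$ survives as an essential multicurve in $S^z$, every infinite-order element of $\mu^z(G)$ would be reducible, directly contradicting the hypothesis you are trying to exploit. So the ``carried by a pseudo-Anosov piece'' horn of your dichotomy is vacuous unless $Y = S$, i.e.\ unless $f$ is already pseudo-Anosov, which is the case you were hoping to reduce to. The generic situation is the other horn, and the paper shows that in that situation there need \emph{not} be any reducible element to extract a contradiction from: the correct picture is that $G$ acts \emph{freely} on the Bass--Serre tree $T$ dual to $\alpha$ (Lemma~\ref{L:free action of G}), because a nontrivial element fixing a vertex or edge of $T$ would stabilize a simplex of $\C^s(S^z)$ and hence be reducible. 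Consequently the minimal invariant subtree $T_G$ is an unbounded cocompact geometric model for $G$ that ranges across infinitely many complementary pieces; $G$ is as spread out as possible, not concentrated on one piece.

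Because the reduction to \cite{dowdallkentleininger} is unavailable, the paper does genuinely new work rather than citing the pseudo-Anosov case. It proves that the orbit map $G \to \C(S^z)$ is a quasi-isometric embedding by uniformly bounding $\diam(T_G \cap T_u)$ over all simplices $u \subset \C(S^z)$ (Proposition~\ref{P:bounded diameter}), using the tree $T_G$ as the geometric model in place of a convex hull in $\mathbb H^3$. That bound in turn requires the quotient surface $\Sigma_0 = \hull_G/G_0$ with spine $\sigma_0 = T_G/G_0$, a reduction to ``deep'' simplices, a decomposition of $T_G \cap T_u$ into ``hull-type'' and ``parallel-type'' subtrees, Kra's theorem to rule out long parallel stretches, and Masur--Minsky subsurface projections to the annuli $A_e$ and pieces $Y_t$ to handle the hull-type part. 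None of this is a citation to \cite{FMcc} or a one-line application of the pseudo-Anosov case. Your preliminary reduction of the $\phi(G)=0$ case to \cite{kentleiningerschleimer} is fine (and is what the paper does), and the observation that convex cocompactness persists under finite-index passage is used in the paper, but the heart of your plan --- finding an invariant pseudo-Anosov subsurface carrying $G$ --- is a step that would fail.
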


\begin{remark} We note that any finitely generated, purely pseudo-Anosov subgroup $G<\Gamma_f$ as in Theorem~\ref{T:reducible} is necessarily free; see Lemma~\ref{L:free action of G}.
\end{remark}

The analogue of Theorem \ref{T:reducible} when  $f$ is pseudo-Anosov was previously shown to be true  in \cite{dowdallkentleininger}.  The analogue for $f$ finite order is a consequence of the result for subgroups $G < \pi_1S$, proved in \cite[Theorem 6.1]{kentleiningerschleimer}.  Indeed, in this case $\mu^z(\Gamma_f)$ contains $\pi_1S$ with finite index, and convex cocompactness is preserved by passage to finite index super- and subgroups.  Combining these results, we see that the conclusion holds for any $f \in \Mod(S)$.

\begin{theorem} \label{T:general} Suppose $\chi(S) < 0$ and let $f \colon S \to S$ be any mapping class. For any subgroup $G< \Gamma_f$, the group $\mu^z(G)<\Mod(S^z)$ is convex cocompact if and only if it is finitely generated and purely pseudo-Anosov.
\end{theorem}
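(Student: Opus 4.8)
The plan is to deduce Theorem~\ref{T:general} from Theorem~\ref{T:reducible} together with the two previously known cases, organized by the Nielsen--Thurston type of $f$. First, the forward implication needs no hypothesis on $f$: convex cocompact subgroups of any mapping class group are finitely generated and purely pseudo-Anosov by \cite{FMcc}, so it suffices to prove that if $\mu^z(G)$ is finitely generated and purely pseudo-Anosov, then it is convex cocompact. By the Nielsen--Thurston Classification Theorem~\cite{FLP}, the mapping class $f$ is either (i) pseudo-Anosov, (ii) infinite order and reducible, or (iii) finite order, and these three cases are exhaustive (though not mutually exclusive).

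In case (i) the desired equivalence is the main theorem of \cite{dowdallkentleininger}, and in case (ii) it is exactly Theorem~\ref{T:reducible}. For case (iii), since $f$ has finite order the monodromy image $\mu(\mathbb Z) = \langle f\rangle < \Mod(S)$ is finite, so the bottom row of the Birman diagram exhibits $\mu^z(\Gamma_f)$ as an extension of a finite cyclic group by $\pi_1S$; equivalently, the copy of $\pi_1S$ inside $\Mod(S^z,z)$ coming from the Birman exact sequence has finite index in $\mu^z(\Gamma_f)$. Hence for any $G < \Gamma_f$ the subgroup $H := \mu^z(G) \cap \pi_1S$ has finite index in $\mu^z(G)$, and $H$ is a subgroup of $\pi_1S < \Mod(S^z,z)$.

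To conclude in case (iii), I would note that all three properties in play are commensurability invariant: finite generation passes to and from finite-index subgroups; a finitely generated group is purely pseudo-Anosov if and only if one (equivalently every) finite-index subgroup is, because an infinite-order mapping class is pseudo-Anosov precisely when some power of it is; and convex cocompactness in $\Mod(S^z)$ is inherited by and from finite-index super- and subgroups. Thus $\mu^z(G)$ is convex cocompact if and only if $H$ is, and $\mu^z(G)$ is finitely generated and purely pseudo-Anosov if and only if $H$ is, so applying \cite[Theorem 6.1]{kentleiningerschleimer} to $H$ closes the loop.

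The substantive content is entirely in Theorem~\ref{T:reducible}; once that is in hand the assembly above is formal. The only point requiring a little care is the finite-index bookkeeping in case (iii)---verifying that the three properties really are commensurability invariant in the precise forms needed and that $H$ sits in the intended copy of $\pi_1S$ inside $\Mod(S^z,z)$---but this is routine.
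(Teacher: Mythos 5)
Your proposal is correct and matches the paper's approach exactly: the paper likewise assembles Theorem~\ref{T:general} from the pseudo-Anosov case in \cite{dowdallkentleininger}, the reducible infinite order case in Theorem~\ref{T:reducible}, and the finite order case by passing to the finite-index subgroup $\mu^z(G) \cap \pi_1 S$ and invoking \cite[Theorem~6.1]{kentleiningerschleimer}, using that convex cocompactness passes to finite-index super- and subgroups. (A very minor quibble: as you have phrased cases (i)--(iii) they actually \emph{are} mutually exclusive, so the parenthetical remark is unnecessary, but this does not affect the argument.)
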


\subsection{Known results}  There are a number of other settings where finitely generated, purely pseudo-Anosov subgroups have been shown to be convex cocompact, providing an affirmative answer to Farb and Mosher's question \cite[Question 1.5]{FMcc}. As mentioned above, if $G$ is a subgroup of either $\pi_1 S$ or $\Gamma_f$ for $f$ pseudo-Anosov, then the inclusion of $G$ into $\Mod(S^z)$ via the Birman exact sequence is convex cocompact if and only if it is finitely generated and purely pseudo-Anosov \cite{kentleiningerschleimer,dowdallkentleininger}.  The same result has been proved under the assumption that $G$ is either a subgroup of  an \emph{admissibly embedded}\footnote{It was shown in \cite{clayleiningermangahas}  that admissibly embedded right-angled Artin subgroups are quite abundant in mapping class groups.  See also \cite{Runnels}.} right-angled Artin subgroup $A < \Mod(S)$ \cite{KobMangTay} or if $G$ is contained in the genus-2 Goeritz group \cite{Goeritz_genus-2_cc}.  In \cite{DurhamTaylor}, it was shown that $G < \Mod(S)$ is convex cocompact if and only if $G$ is a stable subgroup (except for two sporadic surfaces $S$), providing a purely geometric group theoretic characterization.  This was strengthened in \cite{BBKLcc} where it was shown that $G < \Mod(S)$ is  convex cocompact if and only if $G$ is finitely generated, purely pseudo-Anosov, and undistorted.

\subsection{Proof summary} 
When $f \colon S \to S$ has infinite order, $\mu^z \colon \Gamma_f \to \Mod(S^z,z)$ is injective, and we identify $\Gamma_f$ with $\mu^z(\Gamma_f)$. For simplicity, we assume $S$ is closed in this summary, ensuring that $\Phi \colon S^z \to S$ sends every essential curve on $S^z$ to an essential curve on $S$.
To prove Theorem \ref{T:reducible}, we fix a finitely generated and purely pseudo-Anosov $G < \Gamma_f$ and show that the orbit map of $G$ to the curve complex $\C(S^z)$ is a quasi-isometric embedding.  From \cite{hamenstadtcc,kentleiningershadows}, this is equivalent to  $G$ being convex cocompact; see Theorem~\ref{T:ccc char}. The central task is then to find a way to relate distances in $G$ to distances in the curve complex. 

For subgroups $G < \pi_1S$, such a relationship was established in \cite{kentleiningerschleimer} by examining $K_u$, the stabilizer in $\pi_1 S <\Mod(S^z)$ of a simplex $u \subset \C(S^z)$. Using the isometric action of $\pi_1 S$ by deck transformations on the universal cover $p \colon \mathbb H^2 \to S$, we define $\hull_u$ to be  the convex hull of the limit set of $K_u$ in $\partial \mathbb H^2$. The group $G$ also has a convex hull for its limit set, $\hull_G$, on which it will act geometrically and which therefore serves as a geometric model for $G$.  A key argument in \cite{kentleiningerschleimer} proves that $\hull_G \cap \hull_u$ has uniformly bounded diameter, independent $u$.  The simplices that make up a geodesic edge path between $G$--orbit points in $\C(S^z)$ then give rise to a chain of bounded diameter sets in $\hull_G$. The total diameter of this chain bounds distance by a linear function of distance in $\C(S^z)$, as required.
 A similar approach is used in \cite{dowdallkentleininger} for $G < \Gamma_f$, when $f$ is a pseudo-Anosov element of $\Mod(S)$. In this case, the mapping torus $M_f$ is a hyperbolic 3--manifold, thus the convex hulls for $G$ and for simplex stabilizers can be taken in $\mathbb H^3$ instead of $\mathbb H^2$.  Once again, the key result is that these convex hulls intersect in uniformly bounded diameter sets.
 
Our proof in the reducible case is inspired by these methods. The first obstacle is that $M_f$ is not hyperbolic when $f$ is reducible, and consequently convex hulls in the universal cover are not as well-behaved.  Instead, we use the Bass--Serre tree $T$ dual to the canonical reducing multicurve $\alpha$ for $f$.  Suspending this canonical multicurve $\alpha$ in the mapping torus $M_f$ produces the torus decomposition, and $T$ is the tree dual to the tori.  The action of $\pi_1S$ on $T$ thus extends to an action of $\Gamma_f = \pi_1M_f$; see \S\ref{S:actions on H and T}.  The analogues of the hull for $G$ and for a multicurve $u \subset \C(S^z)$ are then played by a $G$--invariant subtree $T_G \subset T$ and a $K_u$--invariant subtree $T_u \subset T$, respectively; see \S\ref{S:hulls and trees}. 
Being purely pseudo-Anosov implies that $G$ acts freely on $T$, and we show that $T_G$ is a geometric model for $G$; see Lemma~\ref{L:free action of G}.  The key to proving Theorem~\ref{T:reducible} rests on showing that $T_G \cap T_u$ has bounded diameter, independent of $u \subset \C(S^z)$; see Proposition~\ref{P:bounded diameter}.
 
To understand $T_G \cap T_u$, we return to examining the convex hulls in $\mathbb H^2$.  The splitting of $\Gamma_f \cong \pi_1S \rtimes\langle f \rangle$ gives an action of $\Gamma_f$ on $\partial \mathbb{H}^2$ by homeomorphisms, extending the isometric action of $\pi_1S$ by deck transformations; see \S\ref{S:actions on H and T}.  This allows us to define $\hull_G$, which admits an isometric action by $G_0 = G \cap \pi_1S$.  Further, there is a $G_0$--equivariant inclusion $T_G \to \hull_G$, since the action on $T_G$ is free.
The quotient $p_0 \colon \hull_G \to \hull_G / G_0 = \Sigma_0$ is an infinite type, two ended surface, and $T_G/G_0 = \sigma_0$ is a spine; see \S\ref{S:G0 quotient}.  The quotient group $G/G_0 \cong \mathbb Z$ admits a cocompact, non-isometric action on $\Sigma_0$.
While the action of $G /G_0$ on $\Sigma_0$ is not isometric, the induced action on the spine $\sigma_0$ is; see \S\ref{S:polygons and G quotient}.
 
A critical technical step in our proof is the construction of a compact subsurface $\Sigma_1 \subset \Sigma_0$ so that for any simplex $u \subset \C(S^z)$, there is an element $g \in G$ for which $p_0(T_G \cap T_{g(u)}) \subset \sigma_0$ is contained in the subsurface $\Sigma_1$; see Lemma~\ref{L:translating deep}.  We say a simplex $u\subset \C(S^z)$ is {\em deep} if $p_0(T_G \cap T_{ u}) \subset \Sigma_1$, and by the previous sentence, it suffices to bound the diameter of $T_G \cap T_u$ for deep simplices.  The construction of $\Sigma_1$ is outlined in the subsection below, but we note that $\pi_1 \Sigma_1 = G_1 < G_0$ is a finitely generated, purely pseudo-Anosov subgroup.

For a deep simplex $u \subset \C(S^z)$, the intersection $\hull_G \cap \hull_u$ is contained in a uniformly bounded neighborhood of $\hull_{G_1} \cap \hull_u$.  Since $G_1 <\pi_1S$ is finitely generated and purely pseudo-Anosov, this has uniformly bounded diameter from \cite{kentleiningerschleimer}, as described above.   The vertices of the subtrees $T_G$ and $T_u$ are precisely those which are dual to regions of $\mathbb H^2 \ssm p^{-1}(\alpha)$ that $\hull_G$ and $\hull_u$ respectively intersect.  If we also knew that the vertices of $T_G \cap T_u$ were dual to regions intersected by $\hull_G \cap \hull_u$, then for deep simplices, the bound on the diameter of $\hull_G \cap \hull_u$ would imply one for $T_G \cap T_u$, and we would be done.
However, it is possible for both $\hull_G$ and $ \hull_u$ to intersect the region dual to a vertex $t \in T$, while $\hull_G \cap \hull_u$ is disjoint from it.  Our proof thus splits into two parts: bounding the diameter of a single subtree spanned by ``hull type''  vertices of $T_G \cap T_u$ that \emph{do} come from $\hull_G \cap \hull_u$, and the complementary subtrees of ``non-hull type'' vertices that do not; see \S\ref{S:subtrees}.  The former are handled as just explained; see Lemma~\ref{L:hull type subtree}.  For the latter, we proceed as follows.

For every path $\ell \subset T_G \cap T_u$ containing only ``non-hull type" vertices, we produce geodesics  $\delta_G \subseteq \partial \hull_G$ and $ \delta_u \subseteq \partial \hull_u$  that ``run parallel'' through the regions of $\mathbb H^2 \ssm p^{-1}(\alpha)$ corresponding the vertices of $\ell$. Hence we call the non-hull type vertices,  ``parallel type'' vertices.  If $\ell$ is long enough, then we show that $p_0(\delta_G)\subset \Sigma_1$ must project to a closed boundary geodesic of $\Sigma_1$.  Since this geodesic represents an element of $G_1$, it is pseudo-Anosov, and so further projects to a filling geodesic in $S$ by a result of Kra \cite{kra}; see Theorem~\ref{T:Kra}.  On the other hand, $\delta_u$ projects to a simple closed geodesic in $S$ (isotopic to a component of $\Phi(u)$; see \S\ref{S:fibers and trees}).  But if $\ell$ is too long, then the simple closed geodesic image of $\delta_u$ runs parallel to the filling geodesic image of $\delta_G$ for a long time, which is a contradiction.  This proves a uniform bound on the diameter of $\ell$; see Lemma~\ref{L:bounding parallel subtrees}.  Combining the hull-type and parallel-type subtree bounds for deep simplices, proves a uniform bound on $\diam(T_G \cap T_u)$, for every simplex $u \subset \C(S^z)$, as required.

     \subsection{Construction of $\Sigma_1$} We now outline the construction of the compact subsurface $\Sigma_1 \subset \Sigma_0$ with the property that for every simplex $u \subset \C(S^z)$, the image $p_0(T_G \cap T_u)$ can be translated into $\Sigma_1$ by an element of $G$.
Since the spine $\sigma_0 = p_0(T_G)$ of $\Sigma_0$ has an isometric action of $\mathbb Z \cong G/G_0$, it suffices to prove a uniform bound on $p_0(T_G \cap T_u)$ in $\sigma_0$:  we then simply take a sufficiently large neighborhood $\sigma_1$ of a fundamental domain for the action of $G/G_0$ on the spine $\sigma_0 \subset \Sigma_0$, and take $\Sigma_1$ to be a thickening of $\sigma_1$ in $\Sigma_0$.

To bound the diameter of  $p_0(T_G \cap T_u)$ in $\sigma_0$, we utilize Masur and Minsky's subsurface projections \cite{MM2}. For simplicity we describe the idea in the case where our reducible surface homeomorphism $f \colon S \to S$ is a Dehn twist about a single curve $\alpha$. First, we let $A \to S$ be the annular cover whose core curve is $\alpha$, and for every simplex $u \subset \C(S^z)$, let $\pi(v)$ be the subsurface projection of $v=\Phi(u) \subset \C(S)$ to the {\em arc graph} of $A$; see \S\ref{S:MM projections}.  Next, for every edge $e \subset T_G$, there is a dual geodesic $\widetilde \alpha_e \subset p^{-1}(\alpha)$, and we can identify the annulus $A$ with the quotient $A = \mathbb H^2/\Stab_{\pi_1S}(\widetilde \alpha_e)$.   There are two boundary components of $\partial \hull_G$ that non-trivially intersect $\widetilde \alpha_e$, and we let $\Delta_e$ denote their image in $A$, viewed as a subset of the arc graph of $A$; see \S\ref{S:projections}.
These sets $\Delta_e$ decorate the edges $e$ of $T_G$ and are $G$--equivariant, with $\Delta_{g(e)} = f^{\phi(g)}  (\Delta_e)$ where $f^{\phi(g)}$ is the image of $g \in G$ under the homomorphism $G \to \langle f \rangle$; see Lemma~\ref{L:equivariant decoration}.

     The key idea is now the following: any edge $e$ for which the intersection number of an arc in $\Delta_e$ with one from $\pi(v)$ is sufficiently large is a ``dead end", beyond which the hull type subtree and any parallel type subtree cannot extend; see Lemma~\ref{L:small_proj_parallel_type} and Lemma~\ref{L:dead ends}.
Since $\sigma_0$ has finite valence, distant vertices in $\sigma_0$ should basically differ by a large power of the generator of $G / G_0$, which is isotopic to a large power of the Dehn twist $f$. Thus for a geodesic $\ell$ in $T_G \cap T_u$ with $p_0(\ell)$ sufficiently long,  there must be two edges $e,e'$ of $\ell$  and an element of $g\in G$ so that $e' = g(e)$ and $|\phi(g)|$ is  large.  However, if  $\Delta_e$ and $\pi(v)$ have small intersection number (because $e$ is not a ``dead end''), then  $\Delta_{e'} = \Delta_{g(e)} = f^{\phi(g)} (\Delta_e)$ and $\pi(v)$ will have large intersection number (depending on $|\phi(g)|$). Thus we get a bound on how large $|\phi(g)|$ can be, and hence a bound on how long $p_0(\ell)$ can be.
     
 Our proof in the general case of arbitrary reducible $f$ follows the same basic idea using the subsurface projection to the complementary components of $S \ssm \alpha$ to give a decoration on the vertices of $T_G$ in addition to decorations of edges coming from the annular covers.  The argument is complicated by the fact that $f$ may act trivially on some subsurfaces and some annuli; see \S\ref{S:Bounding_intersection_in_sigma_0} for details.

\medskip

\noindent
{\bf Acknowledgments.} The first author thanks Spencer Dowdall, Autumn Kent, and Saul Schleimer for their collaboration on \cite{kentleiningerschleimer} and \cite{dowdallkentleininger}, which served as motivation for the current work.   Both authors thank Jason Behrstock and Dan Margalit for comments on an earlier draft of the paper.  The first author was supported by NSF grant DMS-2106419 and the second by   NSF grant DMS-2103191.

     \section{Preliminaries}
     
     Throughout, $S$ will denote a connected orientable finite type surface with negative Euler characteristic. We will equip this surface with a complete hyperbolic metric of finite area that identifies $\mathbb H^2$ with the universal cover $p \colon \mathbb H^2 \to S$.  Given a point $z \in S$, we let $S^z$ denote the surface obtained by puncturing $S$ at $z$. We also equip $S^z$ with a complete, finite area hyperbolic metric. The \emph{curve complex} of $S$ (or $S^z$) is the flag simplicial complex $\C(S)$ whose vertices are isotopy classes of essential, simple closed curves on $S$ with two isotopy classes joined by an edge if they have disjoint representatives. Each vertex of $\C(S)$ has a unique geodesic representative and two vertices will be joined by an edge if and only if these geodesic representative are disjoint. Hence, each simplex of $\C(S)$ corresponds to a multicurve on $S$, which has a unique geodesic representative.  Whenever convenient, we will assume that a simplex/multicurve $v \subset \C(S)$ is represented in $S$ as a geodesic multicurve.
     
     Given a surface with boundary $Y$, we define the \emph{arc and curve complex}  to be the flag simplicial complex $\AC(Y)$ whose vertices are isotopy classes of both essential, simple closed curves and isotopy classes of essential arcs meeting the boundary of $Y$ precisely in their endpoints.\footnote{One often allows properly embedded arcs with ends in cusps of $Y$, if any, but we will not need such arcs in our work, so omit them in our definition.}  As with the curve complex, two vertices of $\AC(Y)$ are joined by an edge if there are disjoint representatives for the isotopy classes.  
     
 When $S$ is a once-punctured torus or four-punctured sphere, one usually makes a different definition for $\C(S)$, but we do not do that here.  In particular, these curve complexes are discrete, countable sets.  On the other hand, if $Y$ is a torus with one boundary component or a sphere with at least one boundary component and the sum of the number of boundary components and punctures equal to $4$, then we do take the usual modified definition for $\AC(Y)$ in which vertices are joined by an edge if they intersect once or twice for these two types of surfaces, respectively.  The reason is that for $\C(S)$, we need Theorem~\ref{T:fibers_are_trees} below to hold, while for $\AC(Y)$, we will use coarse geometric properties in \S\ref{S:Bounding_intersection_in_sigma_0}.

     If $A \to S$ is an annular cover, let $\bar A$ denote the compact annulus obtained from $A$ by adding its ideal boundary from the hyperbolic metric on $S$.  This compactification, $\bar A$, of $A$ is independent of the choice of metric.  The \emph{arc complex} $\A(A)$ is the flag simplicial complex whose vertices are isotopy classes of essential arcs on $\bar A$, where unlike other surfaces with boundary, isotopies of $\bar A$ are required to be the identity on $\partial \bar A$.  Edges of $\A(A)$ correspond to pairs of isotopy classes with representatives having disjoint interiors.  The annuli of primary interest come from curves $w \in \C(S)$.  More precisely, every such curve $w$ determines a conjugacy class of cyclic subgroups of $\pi_1S$ and hence an annular covering (unique up to isomorphism) $A = A_w \to S$ for which $w$ lifts to the core curve.

     \subsection{Mapping class groups and Birman exact sequence} \label{S:MCG-Birman}
     
     We recall that the mapping class group of $S$ is the group of orientation preserving homeomorphisms (or diffeomorphisms) of $S$, modulo the normal subgroup of those homeomorphisms that are isotopic to the identity,
     \[\Mod(S) = \Homeo^+(S)/\Homeo_0(S).\]
     Every element of $\Mod(S)$ is thus the isotopy class of a homeomorphism.
     
     Recall that we have fixed a basepoint $z \in S$, and $S^z = S \ssm \{z\}$. We write $\Phi \colon S^z \to S$ for the inclusion.  The puncture of $S^z$ that accumulates on $z$ via $\Phi$ is called the \emph{$z$--puncture} and we often refer to $\Phi$ as the map that ``fills the $z$--puncture back in".
     
     We are interested in the finite index subgroup $\Mod(S^z,z) < \Mod(S^z)$ consisting of isotopy classes of homeomorphisms that fix the $z$--puncture.  Any homeomorphism $\varphi \colon S^z \to S^z$ defining an element of $\Mod(S^z,z)$ uniquely determines a homeomorphism $\varphi' \colon S \to S$ extending over the point $z$ by sending it to itself and by the formula $\varphi' \circ \Phi = \Phi \circ \varphi$ on $S^z$.  When the context makes the meaning clear, we usually abuse notation and use the same symbol $\varphi$ to denote the mapping class in $\Mod(S^z,z)$, a representative homeomorphism of $S^z$, as well as the unique extension to a homeomorphism of $S$.
     
     The extension of a homeomorphism of $(S^z,z)$ over the point $z$ via the map $\Phi$ defines a surjective homomorphism $\Phi_* \colon \Mod(S^z,z) \to \Mod(S)$, and the \textit{Birman's exact sequence} \cite{birmanbraids} identifies an isomorphism of the kernel of $\Phi_*$ with $\pi_1S$:
     
     \begin{equation*} \label{E:BES}
     	\begin{tikzcd}
     		1 \ar[r] 		& \pi_1S \ar[r]				& \Mod(S^z,z) \ar[r,"\Phi_*"]								& \Mod(S) \ar[r] 			&1.
     	\end{tikzcd}
     \end{equation*}
    
     It will be useful to describe explicitly the isomorphism of the kernel of $\Phi_*$ with $\pi_1S$. 
     If $\varphi \colon S^z \to S^z$ represents an element of the kernel, then the extension $\varphi \colon S \to S$ over the point $z$ is isotopic to the identity, by an isotopy that does {\em not preserve} $z$.  If $\varphi_t \colon S \to S$ is the isotopy so that $\varphi_0 = \varphi$ and $\varphi_1 = {\bf 1}_S$, then defining $\gamma(t) = \varphi_t(z)$, we see that $\gamma$ is a loop based at $z$.  The isomorphism of the kernel with $\pi_1S$ assigns the homotopy class of $\gamma$ to $\varphi \in \Mod(S^z,z)$.  Alternatively, we can think of producing a homeomorphism $\varphi \colon S^z \to S^z$ by {\em pushing} $z$ around the loop $\gamma^{-1}$ by an isotopy on $S$; we call this the {\em point push around $\gamma^{-1}$}.
     
     Another perspective is useful in our setting.  Fix a point $\widetilde z \in p^{-1}(z)$.   Any homeomorphism $\varphi \colon S^z \to S^z$ (representing an element of $\Mod(S^z,z)$) has a unique lift $\widetilde \varphi \colon \mathbb H^2 \to \mathbb H^2$ fixing $\widetilde z$.  The lift $\widetilde \varphi$ is a quasi-isometry,\footnote{When $S$ has cusps we assume any homeomorphism $\varphi$ of $S$ is an isometry in some neighborhood of the cusps; this is a convenience, however, as the extension of the lift to $\partial \mathbb H^2 \to \partial \mathbb H^2$ exists independent of this assumption.}
     and so has a unique extension to a homeomorphism $\partial \mathbb H^2 \to \partial \mathbb H^2$. Any other representative of the isotopy class of $\varphi$ in $\Mod(S^z,z)$ has the same extension, since the lift of the isotopy moves all points a bounded hyperbolic distance, thus we obtain an action of $\Mod(S^z,z)$ on $\partial \mathbb H^2$.
     
     Next observe that if $\varphi_0 \colon S^z \to S^z$ represents an element in the kernel of $\Phi_*$, and $\varphi_t \colon S \to S$ is the isotopy to the identity.  This isotopy lifts to an isotopy $\widetilde \varphi_t$ from the lift $\widetilde \varphi_0$ fixing $\widetilde z$ to a lift of the identity.  The resulting lift of the identity,   $\widetilde \varphi_1$, is thus a covering transformation, namely the one associated to $\gamma$ (as defined above by $\gamma(t) = \varphi_t(z)$).  Thus, we have the following:
     \begin{proposition}[{c.f.~\cite[\S1.2.3]{leiningermjschleimer}}] \label{P:extensions} The restriction of the action of $\Mod(S^z,z)$ on $\partial \mathbb H^2$ to $\pi_1(S)$ agrees with the extension of the isometric covering action of $\pi_1S$ on $\mathbb H^2$.
     \end{proposition}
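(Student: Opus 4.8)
The plan is to unwind both actions on $\partial\mathbb{H}^2$ on a single element of the kernel and check they agree on the nose. Fix $\varphi_0\colon S^z\to S^z$ representing an element of $\ker\Phi_*$, let $\varphi_t\colon S\to S$ be an isotopy (not preserving $z$) from its extension $\varphi_0$ to $\mathbf 1_S$, and let $\gamma(t)=\varphi_t(z)$, so that under Birman's isomorphism $\varphi_0$ corresponds to $[\gamma]\in\pi_1S$. By construction, the action of $\varphi_0$ on $\partial\mathbb{H}^2$ coming from $\Mod(S^z,z)$ is the boundary extension of the unique lift $\widetilde\varphi_0\colon\mathbb{H}^2\to\mathbb{H}^2$ fixing $\widetilde z$, while the isometric $\pi_1S$--action of $[\gamma]$ on $\partial\mathbb{H}^2$ is the boundary extension of the deck transformation $g_\gamma$ determined by $\gamma$ and the basepoint $\widetilde z$. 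So it suffices to show $\widetilde\varphi_0$ and $g_\gamma$ induce the same map on $\partial\mathbb{H}^2$.

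Next I would lift the isotopy. Since $\widetilde\varphi_0$ fixes $\widetilde z$, the isotopy $\varphi_t$ lifts uniquely to an isotopy $\widetilde\varphi_t\colon\mathbb{H}^2\to\mathbb{H}^2$ of lifts of $\varphi_t$ with the prescribed initial term; in particular $\widetilde\varphi_1$ is a lift of $\mathbf 1_S$, hence a covering transformation. To identify it, track the basepoint: $t\mapsto\widetilde\varphi_t(\widetilde z)$ is a path starting at $\widetilde z$ and projecting to $t\mapsto\varphi_t(z)=\gamma(t)$, so it is the lift $\widetilde\gamma$ of $\gamma$ based at $\widetilde z$, whence $\widetilde\varphi_1(\widetilde z)=\widetilde\gamma(1)$. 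A covering transformation is determined by the image of one point, so $\widetilde\varphi_1=g_\gamma$. It remains to compare $\widetilde\varphi_0$ and $\widetilde\varphi_1$ on the boundary, and here we invoke the same bounded--displacement principle that makes the $\Mod(S^z,z)$--action on $\partial\mathbb{H}^2$ well defined: the isotopy $\varphi_t$ moves points a bounded distance on $S$ (which is compact, or along which $\varphi_t$ may be taken isometric near the cusps), so the lifted isotopy $\widetilde\varphi_t$ moves every point of $\mathbb{H}^2$ a uniformly bounded hyperbolic distance, giving $\sup_{x\in\mathbb{H}^2}d\big(\widetilde\varphi_0(x),\widetilde\varphi_1(x)\big)<\infty$. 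Two quasi-isometries of $\mathbb{H}^2$ at bounded distance have the same continuous extension to $\partial\mathbb{H}^2$, so the extension of $\widetilde\varphi_0$ equals that of $g_\gamma=\widetilde\varphi_1$, as desired.

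The one point requiring care is a bookkeeping issue rather than a genuine obstacle: one must make sure the identification of $\ker\Phi_*$ with $\pi_1S$ via $\gamma(t)=\varphi_t(z)$ is compatible with the convention identifying $\pi_1(S,z)$ with the deck group relative to $\widetilde z$, so that $\widetilde\varphi_1$ is genuinely $g_\gamma$ and not its inverse; the basepoint--tracking computation above is exactly what pins this down. One should also observe that $[\gamma]$, and hence $g_\gamma$, does not depend on the chosen isotopy $\varphi_t$, since homotopic loops have lifts with the same endpoint and a covering transformation is determined by the image of a point.
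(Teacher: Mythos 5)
Your argument is correct and follows essentially the same route the paper takes: lift the isotopy $\varphi_t$ from $\varphi_0$ to $\mathbf{1}_S$ starting at the lift $\widetilde\varphi_0$ fixing $\widetilde z$, identify the resulting lift of the identity $\widetilde\varphi_1$ as the deck transformation $g_\gamma$, and conclude that $\widetilde\varphi_0$ and $g_\gamma$ have the same boundary extension. You spell out two details the paper leaves implicit---basepoint tracking to pin down $\widetilde\varphi_1=g_\gamma$ rather than $g_{\gamma}^{-1}$, and the bounded-displacement principle at the end---but the structure of the proof is the same.
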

     
     Kra's Theorem \cite{kra} describes precisely which elements of $\pi_1S$ represent pseudo-Anosov elements of $\Mod(S^z,z)$.  Recall that a loop is {\em filling} if it cannot be homotoped disjoint from any essential simple closed curve (and is thus a property of the homotopy class).
     \begin{theorem}[\cite{kra}] \label{T:Kra} An element of $\pi_1S$ represents a pseudo-Anosov element of $\Mod(S^z,z)$ if and only if it is represented by a filling loop.
     \end{theorem}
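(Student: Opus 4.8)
To prove Theorem~\ref{T:Kra}, the plan is to analyze the point-pushing mapping class $\mathrm{Push}(\gamma)\in\pi_1S<\Mod(S^z,z)$ determined by $\gamma$ (``push $z$ around $\gamma$''). Since $\mathrm{Push}(h\gamma h^{-1})$ is conjugate to $\mathrm{Push}(\gamma)$ in $\Mod(S^z,z)$, and both ``$\gamma$ is filling'' and ``$\mathrm{Push}(\gamma)$ is pseudo-Anosov'' depend only on the conjugacy class of $\gamma$, I may replace $\gamma$ by any loop in its free homotopy class. Because the Birman kernel $\pi_1S$ is torsion free, $\mathrm{Push}(\gamma)$ has infinite order when $\gamma\ne 1$ and is trivial otherwise, so by the Nielsen--Thurston classification it suffices to prove that, for $\gamma\ne 1$, the class $\mathrm{Push}(\gamma)$ is reducible if and only if $\gamma$ is non-filling. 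One direction is immediate: if $\gamma$ is non-filling, then (after conjugating $\gamma$, and moving the basepoint by an isotopy if necessary) we may take the representative loop $\gamma$ to lie in $S\ssm c$ for some essential simple closed curve $c\subset S$; such a $c$ remains essential in $S^z$ (puncturing at $z$ can make an essential curve of $S$ missing $z$ neither null-homotopic nor peripheral), and since $\mathrm{Push}(\gamma)$ then has a representative supported in $S\ssm c$ it fixes the isotopy class of $c$ in $S^z$, so it is reducible.

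For the converse I would argue by contradiction through the mapping torus. Suppose $\gamma$ is filling but $\mathrm{Push}(\gamma)$ is reducible; after replacing $\gamma$ by a power (still filling), some representative homeomorphism $\varphi$ of $\mathrm{Push}(\gamma)$ fixes an essential simple closed curve $c\subset S^z$ setwise. Because the extension of $\varphi$ over $z$ is isotopic to $\mathbf 1_S$ with $z$ tracing out $\gamma$, the mapping torus of $\mathrm{Push}(\gamma)\colon S^z\to S^z$ is homeomorphic to the knot complement $(S\times S^1)\ssm\hat\gamma$, where $\hat\gamma=\{(\gamma(t),t)\}$ is the suspension of $z$; inside this manifold, suspending $c$ produces an incompressible torus $T$ that is not boundary-parallel. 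I would then examine $T$ inside the ambient $S\times S^1$. If $T$ is incompressible there, then since $S\times S^1$ (with $\chi(S)<0$) is Seifert fibered over $S$ and its only incompressible tori are the vertical ones $c'\times S^1$ with $c'$ an essential simple closed curve on $S$, the torus $T$ is isotopic in $S\times S^1$ to some $c'\times S^1$; because $T$ is disjoint from $\hat\gamma$, this exhibits $\gamma$ as freely homotopic off $c'$, i.e.\ $i(\gamma,c')=0$, contradicting that $\gamma$ fills $S$.

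The step I expect to be the main obstacle is the complementary case, in which $T$, while incompressible in $(S\times S^1)\ssm\hat\gamma$, is compressible in $S\times S^1$ itself. Then $T$ bounds a solid torus $V\subset S\times S^1$ with $\hat\gamma\subset V$, and one must analyze the position of $\hat\gamma$ inside $V$ directly---ruling out, in particular, that $\hat\gamma$ is a nontrivial satellite with companion torus $T$---and it is precisely here that the full topological force of ``$\gamma$ fills $S$'' has to be brought to bear. An alternative route, which is closer to Kra's original argument \cite{kra}, avoids $3$--manifold topology altogether: one studies the isometric action of $\mathrm{Push}(\gamma)$ on the Teichm\"uller space $\mathcal T(S^z)$ through the Bers fiber space projection $\mathcal T(S^z)\to\mathcal T(S)$---on whose fibers $\mathrm{Push}(\gamma)$ acts by the deck transformation $\gamma$, in accordance with Proposition~\ref{P:extensions}---and uses the collar lemma to show that, when $\gamma$ fills, the hyperbolic length function $X\mapsto \ell_X(\gamma)$ is proper on $\mathcal T(S)$; this properness forces the minimal Teichm\"uller translation length of $\mathrm{Push}(\gamma)$ to be positive and attained, so that $\mathrm{Push}(\gamma)$ is pseudo-Anosov by Bers' classification. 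On either route, the filling hypothesis is the crux.
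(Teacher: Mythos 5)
The paper offers no proof of Theorem \ref{T:Kra}: the statement is quoted from Kra \cite{kra} and the reader is referred to the exposition in \cite[\S14.1.4]{farbmargalit}, with only the remark that the point--pushing description of Birman's isomorphism plus the characterization of pseudo-Anosov maps as those with no periodic system of essential isotopy classes ``suggests a proof.''  So there is no argument in the paper to compare yours to; you are supplying a proof.  (For what it is worth, the Farb--Margalit argument works directly on $\partial\mathbb H^2$ via Proposition~\ref{P:extensions}: one lifts a candidate reducing curve and shows that, because $\mathrm{Push}(\gamma)$ acts on $\partial\mathbb H^2$ as the deck transformation $\gamma$, a reducing system would force $\gamma$ to miss an essential simple closed curve.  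That route avoids both $3$--manifold topology and Teichm\"uller theory.)

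Judged on its own merits, your easy direction is correct.  But both of your proposed routes for the hard direction stop exactly at the step that carries the content of the theorem, and this is a genuine gap, not a deferral of routine details.  In the $3$--manifold route, after $T$ is found to be compressible in $S\times S^1$ and to bound a solid torus $V$ with $\hat\gamma\subset V$, homology forces $\hat\gamma$ to have winding number $\pm 1$ in $V$ (since $[\hat\gamma]=[\{z\}\times S^1]$ is primitive in $H_1(S\times S^1)$); but winding-number-one nontrivial satellite patterns exist, so no algebraic argument dismisses this case.  One would have to exploit the product structure directly---for instance, analyze how $V$ meets the levels $S\times\{t\}$ and extract from that a curve missed by $\gamma$---and that is precisely where ``$\gamma$ fills'' must enter.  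In the Teichm\"uller route, the inference from properness of $X\mapsto \ell_X(\gamma)$ on $\mathcal T(S)$ to positivity and realization of the \emph{Teichm\"uller} translation length of $\mathrm{Push}(\gamma)$ on $\mathcal T(S^z)$ is not automatic: properness controls translation in the hyperbolic metric on each Bers fiber, and transferring this to the Teichm\"uller metric upstairs requires a Schwarz-type comparison for the Bers fibration, which is the technical core of Kra's argument.  You flag the first gap explicitly, which is good, but the second is left unflagged, and as written neither sketch constitutes a complete proof.
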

     Since being pseudo-Anosov is equivalent to not having any isotopy classes of periodic simple closed curves, the point pushing description of Birman's isomorphism suggests a proof of Theorem~\ref{T:Kra}; see \cite[\S14.1.4]{farbmargalit}.
     
     \subsection{Fibers and trees} \label{S:fibers and trees}

     We let $\C^s(S^z) \subset \C(S^z)$ denote the subcomplex spanned by curves whose image under $\Phi \colon S^z \to S$ is an essential curve on $S$.  We call the vertices of $\C^s(S^z)$ the \emph{surviving curves} of $S^z$. Since $\Phi$ maps disjoint curves  to disjoint curves, it induces a simplicial, surjective map which we also denote $\Phi \colon \C^s(S^z) \to \C(S)$, by an abuse of notation. Given any simplex, $v \subset \C(S)$, we let $\Phi^{-1}(v)$ denote the preimage of the barycenter of $v$.  The following is proved in \cite{kentleiningerschleimer}.
     
     \begin{theorem}\label{T:fibers_are_trees} For any simplex $v \subset \C(S)$, there is a $\pi_1S$--equivariant homeomorphism from the Bass--Serre tree $T$ dual to $v$ to $\Phi^{-1}(v) \subset \C^s(S^z)$. The image of a vertex $t \in T$ under this homeomorphism is the barycenter of a simplex $u_t \subset \C^s(S^z)$ for which $\Phi(u_t) = v$ and $\Phi \vert_{u_t}$ is injective. Moreover, $t,t'\in T$ are joined by an edge if and only if $u_t \cup u_{t'}$ spans a simplex of $\C^s(S^z)$.
     \end{theorem}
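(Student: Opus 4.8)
The plan is to construct an explicit $\pi_1S$--equivariant simplicial bijection $\rho\colon\Phi^{-1}(v)\to T$ and to verify that it is a homeomorphism. After perturbing $v$ so that $z\notin v$ (which changes neither $\Phi^{-1}(v)$ nor $T$), write $R_0,\dots,R_m$ for the components of $S\ssm v$, with $z\in R_0$, and $w_1,\dots,w_k$ for the components of $v$; these data present $\pi_1(S,z)$ as a graph of groups with Bass--Serre tree $T$, with vertex groups the conjugates of the $\pi_1R_i$ and edge groups the conjugates of the cyclic subgroups $\langle c_j\rangle$ carried by the $w_j$. Via the Birman exact sequence, $\pi_1S<\Mod(S^z,z)$ acts on $\C^s(S^z)$; being $\ker\Phi_*$, it acts trivially on $\C(S)$, so it preserves each fiber $\Phi^{-1}(v)$.

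The first point to settle is that $\Phi^{-1}(v)$ is genuinely a $1$--complex. Its vertices are barycenters of simplices $u_t\subset\C^s(S^z)$ with $\Phi(u_t)=v$ and $\Phi|_{u_t}$ injective, that is, isotopy classes on $S^z$ of multicurves isotopic to $v$ in $S$, disjoint from $z$, and with a single component over each $w_j$. If two distinct curves $a,b$ of $\C^s(S^z)$ are disjoint and both map to $w_j$, then they cobound an annulus in $S$ whose interior contains $z$ (else they would already be isotopic in $S^z$). I would observe, using $\chi(S)<0$, that a simplex of $\C^s(S^z)$ mapping onto $v$ has at most two curves over each $w_j$ (a pair of pants argument) and contains at most one such ``doubled'' pair (two disjoint subsurfaces of $S^z$ cannot both contain a neighborhood of the puncture $z$). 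Consequently every simplex of $\C^s(S^z)$ over $v$ meets $\Phi^{-1}(v)$ in a single point (its barycenter, when all fibers of $\Phi$ on it are singletons) or in a single edge joining the barycenters of two vertex simplices $u_t,u_{t'}$ with $u_t\cup u_{t'}$ a simplex; hence $\Phi^{-1}(v)$ is exactly the $1$--complex with these vertices and edges.

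To define $\rho$, fix $\widetilde z\in p^{-1}(z)$. For a vertex $u_t$, the preimage $p^{-1}(u_t)\subset\mathbb{H}^2$ is a $\pi_1S$--invariant family of disjoint geodesics that is $\pi_1S$--equivariantly isotopic to $\widetilde v:=p^{-1}(v)$; since each line in such a family is the unique line preserved by its stabilizer, matching lines by their stabilizers gives a canonical $\pi_1S$--equivariant identification of the dual tree of $p^{-1}(u_t)$ with $T$, and we let $\rho(u_t)\in T$ be the vertex corresponding to the component of $\mathbb{H}^2\ssm p^{-1}(u_t)$ that contains $\widetilde z$. We extend $\rho$ affinely over each edge of $\Phi^{-1}(v)$; this is consistent because along such an edge exactly one component of the multicurve is being pushed across one line of the configuration. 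For equivariance, let $g\in\pi_1S$ be represented by a homeomorphism $\varphi$ of $S^z$ and let $\widetilde\varphi$ be the lift fixing $\widetilde z$ from the discussion preceding Proposition~\ref{P:extensions}; then $p^{-1}(g\cdot u_t)=\widetilde\varphi\bigl(p^{-1}(u_t)\bigr)$ and $\widetilde\varphi$ conjugates deck transformations by $g$ (this is precisely the point--push/deck identification underlying Proposition~\ref{P:extensions}), so $\widetilde\varphi$ carries the stabilizer of each boundary line of the $\widetilde z$--region to $g$ times it, whence $\rho(g\cdot u_t)=g\cdot\rho(u_t)$.

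It then remains to check that $\rho$ is bijective on both vertices and edges, after which we are done: a simplicial bijection is automatically a homeomorphism, $T$ is a tree, and the assertions $\Phi(u_t)=v$, injectivity of $\Phi|_{u_t}$, and the edge criterion all hold by construction. For surjectivity, take $u_t=v$ to realize the vertex of the $\widetilde z$--region; then induct on distance in $T$, realizing a vertex at distance $n$ (and the connecting edge) from a realization of its predecessor by replacing the appropriate component by its push ``past $z$'' across the line corresponding to the connecting edge, and note that connectedness of $T$ exhausts it. For injectivity, if $\rho(u_t)=\rho(u_{t'})$, then the components of $\mathbb{H}^2\ssm p^{-1}(u_t)$ and $\mathbb{H}^2\ssm p^{-1}(u_{t'})$ containing $\widetilde z$ correspond to the same vertex of $T$; I would upgrade this to a $\pi_1S$--equivariant isotopy from $p^{-1}(u_t)$ to $p^{-1}(u_{t'})$ fixing $\widetilde z$ (it moves one $\widetilde z$--region onto the other, and both contain $\widetilde z$), then descend it to an isotopy of $S$ fixing $z$ and carrying $u_t$ to $u_{t'}$, so that $u_t=u_{t'}$ in $\C^s(S^z)$. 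I expect the injectivity step to be the main obstacle --- converting the combinatorial equality of vertices of $T$ into an honest $\widetilde z$--fixing equivariant isotopy of $\mathbb{H}^2$, equivalently checking that the stabilizer in $\pi_1S$ of a vertex of $T$ acts trivially on the corresponding element of $\Phi^{-1}(v)$ --- and the bookkeeping behind the canonical identification of the dual tree of $p^{-1}(u_t)$ with $T$ (that adjacency of lines is detected by stabilizers, compatibly with the equivariant isotopy to $\widetilde v$) needs comparable care.
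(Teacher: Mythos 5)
Note first that the paper does not actually prove this theorem but cites it from \cite{kentleiningerschleimer}, offering only a brief sketch of the key mechanism: the stabilizer $K_u<\pi_1S$ of a simplex $u\in\Phi^{-1}(v)$ and its convex hull $\hull_u\subset\mathbb{H}^2$, whose interior $p$ carries onto a component of $S\ssm v$, so that vertices of $\Phi^{-1}(v)$ match complementary regions, i.e.\ vertices of $T$. Your construction --- sending $u_t$ to the component of $\mathbb{H}^2\ssm p^{-1}(u_t)$ containing $\widetilde z$ and matching regions by their $\pi_1S$--stabilizers --- is that same mechanism written as the inverse map (the stabilizer of your $\widetilde z$--region is precisely $K_{u_t}$, and its hull in the geodesic picture is the corresponding region of $\mathbb{H}^2\ssm p^{-1}(v)$), and you correctly flag the injectivity check and the canonical-identification bookkeeping as the steps needing the most care; so the proposal is essentially the same approach as the cited reference.
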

     
     The proof of Theorem~\ref{T:fibers_are_trees} involves some ideas that will be useful for us, which we briefly describe.  Given a simplex $u \subset \C(S^z)$, we let $K_u$ denote the stabilizer of $u$ in $\pi_1S < \Mod(S^z,z)$ and $\hull_u \subset \mathbb H^2$ denote the convex hull of the limit set of $K_u$  in $\partial \mathbb H^2$ (if it is nonempty).  If $u \subset \C^s(S^z)$, $v = \Phi(u)$, and $\Phi|_u$ is injective, then $p \colon \mathbb H^2 \to S$ maps the interior $\hull_u^\circ \subset \hull_u$ to a component of $S \ssm v$ (where $v$ is realized by its geodesic representative).
     Up to isotopy, $p(\hull_u^\circ)$ is the $\Phi$--image of the component $U \subset S^z\ssm u$ containing the $z$--puncture.  One way to think about this fact is that point pushing around a loop preserves $u$ precisely when the loop is disjoint from $u$, that is, when the loop (intersected with $S^z$) is contained in $U$.  When $\Phi|_u$ is not injective,  the component of $S^z \ssm u$  containing the $z$--puncture is a once-punctured annulus, making $K_u$ an infinite cyclic group.  In any case, the stabilizer of $\hull_u$ is exactly $K_u$; see \cite[Theorem~4.1]{kentleiningerschleimer}.

     \subsection{Convex cocompactness}
     
     Farb and Mosher originally defined convex compactness in the mapping class group using the action on Teichm\"uller space; see \cite{FMcc}. For our purposes, it will be most convenient to use the following formulation due to Kent--Leininger  and independently Hamenst\"adt.
     
     \begin{theorem}[\cite{kentleiningershadows,hamenstadtcc}] \label{T:ccc char} A subgroup of the mapping class group is convex cocompact if and only if it is finitely generated and the orbit map to the curve complex is a quasi-isometric embedding.
     \end{theorem}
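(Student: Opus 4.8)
The plan is to deduce this from Farb and Mosher's original Teichm\"uller-theoretic definition of convex cocompactness together with the coarse comparison between $\mathcal{T}(S)$ and the curve complex. Recall that $G<\Mod(S)$ is convex cocompact exactly when, for some (equivalently every) $X_0\in\mathcal{T}(S)$, the orbit $G\cdot X_0$ is quasiconvex in the Teichm\"uller metric, equivalently $G$ acts properly and cocompactly on the weak hull $H\subset\mathcal{T}(S)$ of its limit set $\Lambda G\subset\PMF$. The tools I would invoke are: (i) $\C(S)$ is Gromov hyperbolic, and the shortest-curve map $\sys\colon\mathcal{T}(S)\to\C(S)$ is coarsely Lipschitz and coarsely $\Mod(S)$-equivariant (Masur--Minsky), so $\sys$ carries the orbit map $G\to\mathcal{T}(S)$ to the orbit map $G\to\C(S)$ up to bounded error; and (ii) $\sys$ sends every Teichm\"uller geodesic to an unparametrised quasigeodesic of $\C(S)$, with the parametrisation coarsely linear precisely along portions that stay in a thick part $\mathcal{T}_\varepsilon(S)$, while deep excursions of a Teichm\"uller geodesic into the thin part are detected by large subsurface projections of its endpoint foliations (Masur--Minsky, Rafi).

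For the forward direction I would argue as follows. If $G$ is convex cocompact then it is finitely generated, since it acts properly and cocompactly on the length space $H$; and compactness of $H/G$ forces $H$ (and any bounded neighbourhood of it) into a thick part $\mathcal{T}_\varepsilon(S)$, by Wolpert's length estimate. For $g,h\in G$, the Teichm\"uller geodesic $[gX_0,hX_0]$ stays uniformly near the quasiconvex set $G\cdot X_0\subset H$, hence is $\varepsilon'$-thick, so by (ii) its $\sys$-image is a uniform parametrised quasigeodesic joining $\sys(gX_0)$ to $\sys(hX_0)$. Comparing lengths, $d_{\C}(\sys(gX_0),\sys(hX_0))\asymp d_{\mathcal{T}}(gX_0,hX_0)\asymp d_G(g,h)$, the last step because convex cocompactness makes $G\to\mathcal{T}(S)$ a quasi-isometric embedding. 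Hence $G\to\C(S)$ is a quasi-isometric embedding.

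For the converse, assume $G$ is finitely generated and the orbit map $\iota\colon G\to\C(S)$ is a quasi-isometric embedding. First, $G$ is purely pseudo-Anosov: an infinite-order reducible or finite-order element generates a $\mathbb{Z}$ acting with bounded orbit on $\C(S)$ (Masur--Minsky), which would contradict $\iota$ being a quasi-isometric embedding. Next, images under $\iota$ of word geodesics of $G$ are uniform quasigeodesics in the hyperbolic space $\C(S)$, so the Morse lemma makes $\iota(G)$ quasiconvex; therefore $G$ is word hyperbolic and $\iota$ induces a $G$-equivariant embedding $\partial G\hookrightarrow\partial\C(S)=\mathcal{EL}(S)$ (Klarreich), so $\Lambda G$ consists of minimal filling laminations, and composing $\iota$ with the coarsely Lipschitz $\sys$ already yields that $G\to\mathcal{T}(S)$ is a quasi-isometric embedding. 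The substantive step is then to show $\Lambda G$ is in fact uniquely ergodic and that $G$ acts cocompactly on the weak hull $H$ of $\Lambda G$; granting this, each bi-infinite Teichm\"uller geodesic spanning $H$ is cobounded (Masur's recurrence criterion, applied uniformly over the compact family $\Lambda G$), so $H$ lies in a fixed thick part, $\sys|_H$ is a quasi-isometric embedding onto a set within bounded Hausdorff distance of $\iota(G)$, and hence $H$ is quasi-isometric to $\iota(G)$, hence to $G$, with $G\cdot X_0$ coarsely dense in $H$. Together with properness of the action this is exactly Farb--Mosher convex cocompactness.

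The main obstacle is that substantive step of the converse: promoting the curve-complex hypothesis to genuine control in $\mathcal{T}(S)$, equivalently ruling out deep thin excursions of the Teichm\"uller geodesics between orbit points, equivalently showing $\Lambda G$ is uniquely ergodic. A quasi-isometric embedding into $\C(S)$ a priori allows the orbit to be smeared into thin parts of $\mathcal{T}(S)$, where $\sys$ collapses unbounded distances---exactly what happens for $\langle T_\alpha,T_\beta\rangle\cong\mathbb{Z}^2$ with $\alpha,\beta$ disjoint, whose $\C(S)$-orbit is bounded. Showing that the quasi-isometric-embedding hypothesis (as opposed to mere quasiconvexity of $\iota(G)$) forbids this requires the full Masur--Minsky subsurface-projection machinery, Rafi's thick--thin description of Teichm\"uller geodesics, Klarreich's boundary theorem, and Masur's criterion, together with the purely pseudo-Anosov property (finite $G$-stabilisers of curves); I expect essentially all of the work to be concentrated here.
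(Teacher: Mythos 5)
This statement is not proved in the paper: Theorem~\ref{T:ccc char} is quoted verbatim as a citation to Kent--Leininger \cite{kentleiningershadows} and Hamenst\"adt \cite{hamenstadtcc}, so there is no internal proof to compare against. That said, your sketch follows the same broad strategy as those references, and most of it is sound. The forward direction is essentially complete and correct: cocompactness of the action on the weak hull $H$ gives finite generation; a $G$--invariant set with compact quotient lies in a thick part $\mathcal{T}_\varepsilon(S)$ (Mumford); Teichm\"uller geodesics between orbit points fellow-travel the quasiconvex orbit and hence stay thick; and the Masur--Minsky systole map sends thick Teichm\"uller geodesics to parametrized quasigeodesics in $\C(S)$, which chains together to a quasi-isometric embedding. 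Two small imprecisions worth flagging in the converse: a finite-order element does not generate a copy of $\mathbb{Z}$, but since finite-order elements are permitted in convex cocompact subgroups (``purely pseudo-Anosov'' only constrains infinite-order elements) this is harmless and in fact you only need to rule out infinite-order reducible elements; and the reduction ``composing $\iota$ with the coarsely Lipschitz $\sys$ already yields a quasi-isometric embedding $G\to\mathcal{T}(S)$'' is correct but should be stated as deducing the lower distance bound in $\mathcal{T}(S)$ from the lower bound in $\C(S)$ together with the Lipschitz property, with the upper bound coming separately from finite generation.

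The genuine gap is the one you identify yourself: the converse requires showing that quasiconvexity of the $\C(S)$--orbit promotes to cocompactness of the $G$--action on the weak hull of $\Lambda G$ in $\mathcal{T}(S)$, equivalently that the Teichm\"uller geodesics witnessing the weak hull stay uniformly thick. You correctly name the ingredients (Klarreich's boundary theorem identifying $\partial\C(S)$ with the ending lamination space, so that $\Lambda G$ consists of filling laminations; Masur's criterion linking recurrence to unique ergodicity; Rafi's thick--thin description of Teichm\"uller geodesics via subsurface projections), but you do not carry out the argument that the quasi-isometric embedding hypothesis actually excludes deep thin excursions. In \cite{kentleiningershadows} this is precisely where the work lies: one has to show compactness of $\Lambda G$ in $\mathcal{EL}(S)$ plus the hyperbolicity of $\partial G$ forces the limit set into the uniquely ergodic locus and forces coboundedness of every geodesic with both endpoints in $\Lambda G$, and your sketch stops at asserting this rather than proving it. So the proposal is a correct and well-informed outline of the standard proof, with the hard half of the converse left as an acknowledged gap.
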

     
     We will apply this to the case of subgroups of $\Mod(S^z)$.  We note that since the inclusion of a finite index subgroup into a bigger group is a quasi-isometry, convex cocompactness survives passage between finite index super- and subgroups.

\section{Set up} \label{S:set up}

We now fix a homeomorphism $f \colon S \to S$ that defines an infinite order, reducible mapping class in $\Mod(S)$.  We let $\Gamma$ denote the $\pi_1 S$--extension group $\Gamma_f$ that is the fundamental group of the mapping torus for $f$. Since $f$ defines an infinite order mapping class, the homomorphism $\mu^z \colon \Gamma \to \Mod(S^z,z)$ is injective and we identify $\Gamma$ with its image in $\Mod(S^z,z)$. Let $\alpha = \alpha_1\cup \ldots \cup \alpha_n \subset S$ be the canonical reduction system for the reducible mapping class defined by $f$; see \cite{ivanov}. 
Since convex cocompactness is preserved by passing to finite index super-groups and $\Gamma_{f^n} < \Gamma_f$ has finite index, we can replace $f$ with a power when it is helpful.  We do so, and thus (after an isotopy if necessary) assume that $f$ fixes each curve $\alpha_i$ and each component of $S \ssm \alpha$. By possibly raising to a further higher power, we can also assume that $f$ restricted to each component of $S\ssm \alpha$ is either the identity or pseudo-Anosov.
We also assume throughout that $\alpha$ is realized as a geodesic multicurve in $S$ with respect to our fixed hyperbolic metric.

A {\em complementary subsurface} of $\alpha$ is defined as the path metric completion $Y$ of a component $Y^\circ \subset S \ssm \alpha$.  Such a complementary subsurface $Y$ is a hyperbolic surface with geodesic boundary and the inclusion $Y^\circ \to S \ssm \alpha$ extends to an immersion $Y \to S$ that is injective on the interior, and at most $2$-to-$1$ at points of $\partial Y$.  By an abuse of notation, we often write $Y \subset S$ or refer to the map $Y \to S$ as the inclusion.

Write $Y_1,\ldots,Y_k$ to denote the complementary subsurfaces of $\alpha$.
Since each $\alpha_i$ and each $Y_j^\circ$ is invariant by $f$,  we obtain ``restricted" maps $f|_{Y_j} \colon Y_j \to Y_j$.   We can re-index the complementary subsurfaces so that there is some $0 \leq m \leq k$ such that for $j \leq m$, $f\vert_{Y_j}$ is pseudo-Anosov on $Y_j$ and for $j > m$, $f\vert_{Y_j}$ is the identity.   We refer to these subsurfaces $Y_j$ as the {\em pseudo-Anosov components} and the {\em identity components}, respectively.

 Given $f$ as above, we fix a finitely generated subgroup $G <\Gamma$ that is purely pseudo-Anosov as a subgroup of $\Mod(S^z)$. If $G$ is in the kernel of the homomorphism $\Phi_* \colon \Gamma \to \langle f \rangle <\Mod(S)$, then $G$ is contained in $\pi_1S$, and hence is convex cocompact in $\Mod(S^z)$ by \cite[Theorem~6.1]{kentleiningerschleimer}.  Thus, we may assume that $\Phi_*$ sends $G$ onto the subgroup of $\langle f \rangle $ generated by $f^n$ for some $n > 0$.  By passing to a further power if necessary, we can assume that $n=1$; this will be convenient for notational purposes later and does not affect any other properties of $f$ we have already assumed.  If $\phi|_G$ is injective, then $G \cong \mathbb Z$, and the theorem also follows, so we assume $G_0 = \ker(\phi|_G) = G \cap \pi_1S$ is nontrivial (hence infinite).

\subsection{Action on $\mathbb H^2$ and $T$.} \label{S:actions on H and T}

We assume that in our fixed hyperbolic metric, the lengths of the components of $\alpha$ are short enough that any two components of $p^{-1}(\alpha)$ are distance at least $2$ apart in $\mathbb H^2$.  
The action of $\pi_1 S$ on $\mathbb H^2$ preserves $p^{-1}(\alpha)$, and we let $T$ denote  Bass--Serre tree dual to $p^{-1}(\alpha)$. 
The action of $\pi_1S$ on $\mathbb H^2$ and $T$ extend to an action of $\Gamma$ as we now explain.

As in \S\ref{S:MCG-Birman}, given any $\varphi \in \Gamma$ we write $\varphi \colon S^z \to S^z$ for a representative and its extension $\varphi \colon S \to S$ (after filling the $z$--puncture back in). Since $\varphi \in\Gamma$, the homeomorphism $\varphi \colon S \to S$ is isotopic (ignoring $z$) to $f^k$, for some $k \in \mathbb Z$.    The lift $\widetilde \varphi \colon \mathbb H^2 \to \mathbb H^2$ fixing $\widetilde z$ is thus isotopic to a lift of $f^k$ (not necessarily fixing $\widetilde z$), and so has the same extension to $\partial \mathbb H^2$.  
Given any lift $\widetilde f \colon \mathbb H^2 \to \mathbb H^2$ of $f$, any lift of $f^k$ is then obtained by composing $\widetilde f^k$ with an element of $\pi_1S$. Conversely, any such composition is a lift of $f^k$.  Hence, the action of $\Gamma$ on $\partial \mathbb H^2$ factors through an isomorphism with the group $\langle \widetilde f, \pi_1S \rangle$ acting on $\partial \mathbb H^2$.
  This isomorphism $\Gamma \cong \langle \widetilde f,\pi_1S \rangle$ then defines an action on $\mathbb H^2$ extending the covering action of $\pi_1S$.
Alternatively, the given lift $\widetilde f$ is equivariantly isotopic to the lift $\widetilde \varphi$ of some $\varphi \in \Gamma$ with $\Phi_*(\varphi) = f$.  Then $\Gamma = \pi_1S \rtimes \langle \varphi \rangle$ acts on $\mathbb H^2$ so that $\pi_1S$ acts by covering transformations and $\varphi^k$ acts by $\widetilde f^k$ for all $k \in \mathbb Z$. 

 An alternative way to see the action of $\Gamma$ on $\mathbb H^2$ is to consider the universal covering $\widetilde M_f$ of the mapping torus, lift the suspension flow, and consider the quotient by the flow lines.  Since the universal cover $\mathbb H^2$ of $S$ intersects each lifted flow line once, the flow space is identified with $\mathbb H^2$ and the action of $\Gamma$ on $\widetilde M_f$ descends to an action of $\Gamma$ on $\mathbb H^2$ which agrees with the covering action when restricted to $\pi_1S$.

Since $f$ preserves $\alpha$, $\widetilde f$ preserves $p^{-1}(\alpha)$.  Therefore, $\Gamma$ acts on the Bass--Serre tree $T$ dual to $\alpha$. Since $f$ fixes each $Y_i$ and each curve in $\alpha$, a pair of vertices/edges of $T$ are in the same $\Gamma$--orbit if and only if they are in the same $\pi_1 S$--orbit. Unlike the action on $\mathbb H^2$, this action on $T$ is by isometries.
For each edge $e \subset T$,  we write $\widetilde{\alpha}_e$ to denote the component of $p^{-1}(\alpha)$ that is dual to $e$.  We choose a $\Gamma$--equivariant map $\mathbb H^2 \to T$ sending $\widetilde{\alpha}_e$ to the midpoint of $e$, and each component of $S \ssm p^{-1}(\alpha)$ to the $\frac12$--neighborhood of the dual vertex.  There are many such choices, and we sometimes make a choice of one that is convenient for certain applications; for example, we may take such a map to be $K$--Lipschitz, where $K$ depends only on the minimal distance between pairs of components of $p^{-1}(\alpha)$.  We also choose a $\pi_1S$--equivariant map $T \to \Phi^{-1}(\alpha) \subset \C^s(S^z) \subset \C(S^z)$ as in Theorem~\ref{T:fibers_are_trees}, identifying vertices and edges of $T$ with simplices of $\C(S^z)$ in $\Phi^{-1}(\alpha)$.

\subsection{Subsurfaces and annuli for vertices and edges}

For each vertex $t \in T$, we let $\widetilde{Y}_t^\circ$ denote the component of $ \mathbb H^2 \ssm p^{-1}(\alpha)$ dual to $t$, and use $\widetilde Y_t$ for its closure.  Let $K_t= \Stab_{\pi_1S}(\widetilde{Y}_t)$ and define $Y_t$ to be $\widetilde Y_t/K_t$.  We can identify each $Y_t$ with exactly one of the complementary subsurfaces $Y_1,\dots,Y_k$ as follows:
 for a vertex $t \in T$, let $\Upsilon_t = \mathbb H^2 / K_t$. The surface $Y_t$ is then the convex core of $\Upsilon_t$ and there is a unique $i(t) \in \{1,\dots,k\}$ so that the covering map $\Upsilon_t \to S$ maps the interior of $Y_t$ isometrically onto $Y_{i(t)}^\circ \in \{Y_1^{\circ},\dots,Y_k^\circ\}$. If $t$ and $t'$ are in the same $\Gamma$--orbit, then  $\Upsilon_t$ and $\Upsilon_{t'}$ are equivalent covers of $S$ with different choices of base point. Hence there is an isomorphism of covering spaces $\Upsilon_t \to \Upsilon_{t'}$ that sends $Y_t$ isometrically to $Y_{t'}$. In particular, $Y_{i(t)} = Y_{i(t')}$ and we use this to identify $Y_t = Y_{i(t)} = Y_{i(t')} = Y_{t'}$.

For each edge $e \subset T$, we let $K_e = \Stab_{\pi_1S}(\widetilde \alpha_e)$ and define $A_e$ to be the annulus $\mathbb H^2 / K_e$.   There exist a unique $i(e)\in \{1,\dots,n\}$ so that $p(\widetilde \alpha_e) = \alpha_{i(e)}$.  When convenient, we will also write $\alpha_e = \alpha_{i(e)}$.  When $e$ and $e'$ are in the same $\Gamma$--orbit, $A_e$ and $A_{e'}$ are equivalent annular covers of $S$ with core curve $\alpha_e$. Hence, we can isometrically identify all these annuli: $A_e = A_{i(e)} = A_{i(e')} =A_{e'}$.

We note that each vertex $t$ and edge $e$ of $T$ is identified with simplices $a_t$ and $a_e$ of $\C^s(S^z)$, respectively, by Theorem~\ref{T:fibers_are_trees}, and $K_t = K_{a_t}$ and $K_e = K_{a_e}$ are indeed special cases of simplex stabilizers (so the notation is compatible with that in \S\ref{S:fibers and trees}).  Moreover, $\widetilde Y_t = \hull_{a_t}$ and $\widetilde \alpha_e = \hull_{a_e}$, as in \S\ref{S:fibers and trees}.  Using this, and the fact that the $\Gamma$--orbits and $\pi_1S$--orbits of vertices and edges of $T$ are the same, it follows that the $\pi_1S$--equivariant map $T \to \Phi^{-1}(\alpha) \subset \C^s(S^z) \subset \C(S^z)$ is also $\Gamma$--equivariant.

	\subsection{Hulls and Trees} \label{S:hulls and trees}

	We now define invariant subtrees of the Bass--Serre tree $T$ for the simplex stabilizers $K_u$ as well as our purely pseudo-Anosov subgroup $G < \Gamma$. These subtrees will allow us to translate distances in $\C(S^z)$ to distances in $G$.
	
	For each simplex $u \subset \C(S^z)$, the stabilizer $K_u <\pi_1 S$ acts by isometries on $T$. If the action of $K_u$ does not have a global fixed point, we let $T_u$ be the minimal invariant subtree  of $K_u$. In this case, $T_u$ is the union of the axes of loxodromic elements;  see, e.g.~\cite[Proposition~2.9]{Bestvina-Trees}.    If $K_u$ has a global fixed point in $T$, we define $T_u$ to be the maximal fixed subtree. We can readily determine  the structure of $T_u$ by examining the component of $S^z \ssm u$ that contains the $z$--puncture.
	
	\begin{lemma}\label{L:options_for_T_u}
		Let $u \subset \C(S^z)$ be a multicurve and $U$ be the component of $S^z \ssm u$  that contains the $z$--puncture.  
		\begin{enumerate}
			\item 	The action of $K_u$ on $T$ has a global fixed point if and only if $\alpha$ can be isotoped to be disjoint from $\Phi(U)$ in $S$. \label{I:global_fixed_point}
			\item When $K_u$ has a global fixed point, $T_u$ is either a single vertex $t \in T$ or a single edge $e \subset T$. Moreover, $T_u$ is an edge $e$ if and only if $U$ is a once-punctured annulus and each component of $\Phi(\partial U)$ is isotopic to the curve $\alpha_e$ of $\alpha$. \label{I:fixed_sets}
			\item If $u$ contains a non-surviving curve, then $T_u$ is a single vertex. \label{I:non-surviving}
			\item When $u$ consist only of surviving curves and $T_u$ is not an edge, then $t \in T_u$ if and only if $\hull_u \cap \widetilde Y_t^\circ \neq \emptyset$. \label{I:T_hull=H-hull}
		\end{enumerate}
	\end{lemma}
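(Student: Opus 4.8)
The plan is to convert each assertion about the action of $K_u$ on $T$ into a statement about $K_u<\pi_1S$ and the subsurface $\Phi(U)\subset S$, using two facts. First, as recalled in \S\ref{S:fibers and trees}, a point-pushing loop preserves $u$ exactly when it homotopes into $U$, so $K_u$ is the image in $\pi_1S$ of the loops carried by $\Phi(U)$; in particular $K_u\neq 1$, since every curve of $u$ is essential, hence non-peripheral, in $S^z$ and so maps to a nontrivial element of $\pi_1S$. Second, from Bass--Serre theory for the splitting of $\pi_1S$ dual to $\alpha$: a subgroup is elliptic on $T$ iff it lies in some vertex stabilizer $K_t=\Stab_{\pi_1S}(\widetilde Y_t)$; each edge stabilizer $K_e=\Stab_{\pi_1S}(\widetilde\alpha_e)$ is infinite cyclic and contained in the two adjacent vertex stabilizers, while two distinct edge stabilizers meet trivially (a nontrivial element of $\pi_1S$ stabilizes at most one geodesic of $p^{-1}(\alpha)$); and since $\widetilde Y_t=\hull_{a_t}$ and $\widetilde\alpha_e=\hull_{a_e}$ are the convex hulls of the limit sets of $K_t$ and $K_e$, the inclusion $K_u<K_t$ forces $\hull_u\subseteq\widetilde Y_t$ by monotonicity of limit sets and hulls.

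I would prove part~\ref{I:non-surviving} first, as a structural warm-up. If $w\in u$ is non-surviving then $\Phi(w)$ is inessential in $S$; as $w$ is essential in $S^z$ it bounds neither a disk nor a once-punctured disk there, which forces $\Phi(w)$ to be parallel to a puncture $p_i$ with $z$ lying in the once-punctured disk it bounds. Two distinct such curves of $u$ would bound nested once-punctured disks both containing $z$, which is impossible, so $w$ is unique; and a twice-punctured disk contains no essential curves, so $U$ is exactly this twice-punctured disk, $\Phi(U)$ is a once-punctured-disk neighborhood of $p_i$, and $K_u$ is a maximal parabolic subgroup of $\pi_1S$ (generated by a loop about $p_i$). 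Such a parabolic lies in the unique vertex stabilizer for the complementary subsurface containing the cusp $p_i$, and in no edge stabilizer (those are generated by the hyperbolic classes $\alpha_i$), so its fixed set is a single vertex.

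For part~\ref{I:fixed_sets}: an elliptic $K_u$ fixes a subtree that contains no two distinct edges (otherwise $K_u$ would lie in the trivial intersection of two edge stabilizers), hence a single vertex or a single closed edge. If $T_u$ is the edge $e$, then $K_u<K_e$ is infinite cyclic with axis $\widetilde\alpha_e$; since $T_u$ is not a single vertex, part~\ref{I:non-surviving} shows $u$ has only surviving curves, and since $\hull_u$ is a single geodesic, $\Phi|_u$ is not injective (else $p(\hull_u^\circ)$ would be an open subsurface), so by \S\ref{S:fibers and trees} $U$ is a once-punctured annulus and $K_u=\langle\Phi(\partial U)\rangle$; as $p(\widetilde\alpha_e)=\alpha_e$, the curve $\Phi(\partial U)$ is isotopic to $\alpha_e$. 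Conversely, if $U$ is a once-punctured annulus with $\Phi(\partial U)\simeq\alpha_e$ then $K_u=K_e$, whose fixed set is the closed edge $e$. Part~\ref{I:global_fixed_point} follows by the same division: if some curve of $u$ is non-surviving, $\Phi(U)$ is a cusp neighborhood, disjoint from the geodesic $\alpha$, and $K_u$ is elliptic by part~\ref{I:non-surviving}; if all survive and $\Phi|_u$ is not injective, $\Phi(U)$ is an annular neighborhood of the geodesic $\Phi(\partial U)$ and $K_u=\langle\Phi(\partial U)\rangle$ is elliptic iff that geodesic is disjoint from $\alpha$; and if all survive and $\Phi|_u$ is injective, then $K_u<K_t$ yields $\hull_u^\circ\subseteq\widetilde Y_t^\circ$, so $\Phi(U)\simeq p(\hull_u^\circ)$ isotopes into the complementary component $Y_{i(t)}^\circ$, while conversely such an isotopy conjugates $K_u$ into a vertex stabilizer; hence $K_u$ has a global fixed point iff $\alpha$ isotopes off $\Phi(U)$.

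Finally part~\ref{I:T_hull=H-hull}, which I expect to be the crux. Under its hypotheses $T_u$ is either a single vertex, where $K_u<K_t$ by the above, or the minimal $K_u$--invariant subtree, namely the union of the translation axes. For the forward inclusion, a vertex $t\in T_u$ lies on the $T$--axis of some loxodromic $g\in K_u$, which is the itinerary through $T$ of the hyperbolic axis $\gamma_g\subseteq\hull_u$, so $\gamma_g$ (hence $\hull_u$) meets $\widetilde Y_t^\circ$; in the single-vertex case $\hull_u\subseteq\widetilde Y_t$, and $\hull_u$---either a geodesic in $\widetilde Y_t^\circ$ or two-dimensional---meets $\widetilde Y_t^\circ$. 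For the reverse inclusion, the point is that $H:=\bigcup_{t\in T_u}\widetilde Y_t$ is a closed convex subset of $\mathbb H^2$: convex because $T_u$ is a tree and each new region is adjoined along the single complete geodesic $\widetilde\alpha_e$ separating it from the rest, and closed because its complement is a union of open regions together with geodesics dual to edges not in $T_u$. Taking a basepoint in $H$ and using $K_u$--invariance gives $\Lambda(K_u)\subseteq\overline H$, hence $\hull_u\subseteq H$; since $\widetilde Y_t^\circ$ is disjoint from $\widetilde Y_{t'}$ for every $t'\neq t$, $\hull_u\cap\widetilde Y_t^\circ\neq\emptyset$ forces $t\in T_u$. (The excluded edge case is exactly where this breaks: there $\hull_u=\widetilde\alpha_e$ lies on the boundary of, but meets the interior of neither, adjacent region.) The main obstacle is the topological bookkeeping---the classification of non-surviving configurations in part~\ref{I:non-surviving} and the convexity and closedness of $H$ in part~\ref{I:T_hull=H-hull}---with the rest a matter of chasing this dictionary.
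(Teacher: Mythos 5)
Your proposal is correct, and the overall route---converting tree-statements about $K_u$ into statements about $\Phi(U)\subset S$ via the point-pushing dictionary, then case-analyzing---is the same as the paper's. The one place you diverge meaningfully is the reverse inclusion of part~(4). The paper goes through axes of loxodromic elements of $K_u$ sweeping out $\hull_u$, which implicitly requires handling the possibility that the axis you find lies entirely inside $\widetilde Y_t^\circ$ (so that the corresponding element is elliptic, not loxodromic, on $T$). Your argument instead packages the key geometry into the closed, convex, $K_u$--invariant set $H=\bigcup_{t\in T_u}\widetilde Y_t$: since $\Lambda(K_u)\subseteq\overline H$, minimality of the convex hull gives $\hull_u\subseteq H$, and disjointness of the interiors $\widetilde Y_t^\circ$ then forces $t\in T_u$ whenever $\hull_u$ meets $\widetilde Y_t^\circ$. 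This is a clean way to sidestep that edge case, and the convexity-by-tree-induction and closedness-of-complement arguments you sketch are both sound. Your treatment of part~(2) also uses a small shortcut (distinct edge stabilizers in the Bass--Serre tree intersect trivially, hence the fixed subtree has at most one edge) that the paper instead obtains as a by-product of its case analysis on $\hull_u$. Both routes prove the same thing; yours is slightly more modular and a bit more explicit at the two points I just named.

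One small caution in your part~(3): when you argue uniqueness of the non-surviving curve, what you actually derive is that two distinct non-surviving curves of $u$ would be isotopic in $S^z$, rather than that nested once-punctured disks containing $z$ are "impossible" outright --- the contradiction is to the curves being distinct vertices of the simplex $u$, not to the geometric configuration per se. This is cosmetic; the conclusion is right.
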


	\begin{proof}
		As described in \S \ref{S:fibers and trees}, $K_u$ is the group of all pushes along loops in $U$ based at $z$ (after filling the $z$--puncture back in). This group is naturally isomorphic to $\pi_1(\Phi(U), z) <\pi_1(S,z) = \pi_1S$.  Hence $K_u$ contains a hyperbolic isometry of $\pi_1S$ if and only if $\Phi(U)$ is not a once-punctured disk.
		
		Now observe that $\Phi(U)$ is a once-punctured disk if and only if $u$ contains a non-surviving curve.  In this case, $K_u$ is an infinite cyclic group generated by a parabolic isometry. Hence, there is an invariant horoball for $K_u$ that is contained in $\widetilde Y^\circ_t$ for some vertex $t\in T$. It follows that $K_u$ fixes no geodesic in $p^{-1}(\alpha)$, but fixes  $\widetilde Y^\circ_t$.  This implies $T_u = \{t\}$, which proves part \eqref{I:non-surviving}.

		We now focus on the case where $\Phi(U)$ is not a once-punctured disk, so $K_u$ contains a hyperbolic isometry of $\pi_1 S$, or equivalently, when $\hull_u$ is non-empty.
		
		The fixed points in $T$ of the hyperbolic elements of $\pi_1 S$ are determined by their axes in $\mathbb H^2$ as follows. Let $g \in \pi_1S$ be hyperbolic and let $\gamma_g$ be the axis of $g$ in $\mathbb H^2$.  If $\gamma_g\subset\widetilde Y_t^\circ$ for some vertex  $t \in T$, then $t$ is the unique fixed point of $g$. If $\gamma _g = \widetilde \alpha_e$ for some geodesic  $\widetilde \alpha_e \subset p^{-1}(\alpha)$, then the edge $e \subset T$ is the maximal fixed subtree of $g$ in $T$. Finally, if $\gamma_g$  crosses a geodesic in $p^{-1}(\alpha)$, then periodicity says the set of geodesics in  $p^{-1}(\alpha)$ that $\gamma_g$ crosses will be the edges of a bi-infinite geodesic  $\ell_g \subset T$. In this case, $g$ acts loxodromically on $T$, and its axis in $T$ is $\ell_g$. Conversely, if $g$ acts loxodromically on $T$ with axis $\ell_g \subset T$, then $\gamma_g$ crosses all the geodesics in $p^{-1}(\alpha)$ corresponding to the edges of $\ell_g$. 
		
		Returning to $K_u$, if $\hull_u$ contains a bi-infinite geodesic that intersects a geodesic in $p^{-1}(\alpha)$ transversely, then there is some hyperbolic element of $K_u$ whose $\mathbb{H}^2$--axis crosses a geodesic in $p^{-1}(\alpha)$. Since this element will act loxodromically on $T$, $K_u$ has no global fixed point, and thus $T_u$ is the union of axes of the elements of $K_u$ that act loxodromically on $T$.  A vertex $t \in T$ is then on the $T$--axis of a loxodromic element $g \in K_u$ if and only if $g$ is a hyperbolic element of $\pi_1 S$ whose $\mathbb H^2$--axis intersects $\widetilde Y_t$ in a bounded diameter segment. Thus, $t \in T_u$ if and only if $\hull_u \cap \widetilde Y_t^\circ \neq \emptyset$, proving part \eqref{I:T_hull=H-hull}.  In this case, $p(\hull_u^\circ)$ is isotopic to $\Phi(U)$, so $\hull_u$ containing a geodesic that intersects a geodesic in $p^{-1}(\alpha)$ transversely ensures that $\alpha$ cannot be isotoped to be disjoint from $\Phi(U)$, proving (the contrapositive of) one of the implications in part \eqref{I:global_fixed_point}.
		
		If $\hull_u$  does not contain a geodesic that intersects a geodesic in $p^{-1}(\alpha)$ transversely, then either $\hull_u^\circ$ is contained entirely in $\widetilde Y_t^\circ$ for some vertex $t \in T$, or $\hull_u$ is one of the geodesics $\widetilde \alpha_e \subset p^{-1}(\alpha)$.  If $\hull_u^\circ \subset \widetilde Y_t^\circ$, then $T_u = \{t\}$. In this case,  $p(\hull_u^\circ)$ is isotopic to $\Phi(U)$, so  $\hull_u^\circ \subset \widetilde Y_t^\circ$ implies $\Phi(U)$ is disjoint from $\alpha$.
If $\hull_u = \widetilde \alpha_e$, then $K_u = \Stab_{\pi_1S}(\hull_u)$ is an infinite cyclic group generated by a hyperbolic isometry whose axis is the geodesic $\widetilde \alpha_e$. Thus, $T_u = e$ and $p(\hull_u) = \alpha_e \subset \alpha$.  It follows that $\Phi(\partial U)$ is an annulus with core curve isotopic to $\alpha_e$. Thus, $U$ is a once-punctured annulus and $\alpha$ can be isotoped to be disjoint from $\Phi(U)$.  Combined with the case where $\Phi(U)$ is a once-punctured disk, this proves the other implication of part \eqref{I:global_fixed_point}.  Furthermore, when combined with the discussion above from the proof of part \eqref{I:non-surviving}, we also deduce part \eqref{I:fixed_sets}.  This completes the proof of the lemma.
	\end{proof}

	These invariant subtrees have the following intersection property for nested simplices. This allows us to produce paths in $T$ from paths in $\C(S^z)$.

	\begin{lemma}\label{L:nested_simplices}
		Let $u,w$ be simplices of $\C(S^z)$. If $u \subseteq w$, then $T_u \cap T_w \neq \emptyset$.	
	\end{lemma}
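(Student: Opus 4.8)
The plan is to reduce the statement to a comparison between the two subtrees using the geometry of $\mathbb H^2$ and the structure of $T_u, T_w$ given by Lemma~\ref{L:options_for_T_u}. Let $U$ (resp. $W$) denote the component of $S^z \ssm u$ (resp. $S^z \ssm w$) containing the $z$--puncture. Since $u \subseteq w$, every curve of $u$ is either a curve of $w$ or disjoint from the curves of $w$, so $U \subseteq W$; in particular $\Phi(U) \subseteq \Phi(W)$ up to isotopy, and the inclusion $U \hookrightarrow W$ induces an inclusion $K_u = \pi_1\Phi(U) \hookrightarrow K_w = \pi_1\Phi(W)$ (using the identification of simplex stabilizers with surface subgroups described in \S\ref{S:fibers and trees}). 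Correspondingly, when $\hull_u$ and $\hull_w$ are nonempty, $\hull_u \subseteq \hull_w$, since the limit set of $K_u$ is contained in that of $K_w$.

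Next I would case on the structure of $T_u$ and $T_w$ according to Lemma~\ref{L:options_for_T_u}. If $w$ contains a non-surviving curve, then so does $u$ unless $u$ consists only of surviving curves; but in the latter case $U$ is a once-punctured disc (the non-surviving curve of $w$ bounds a once-punctured disc containing $z$, and $u \subseteq w$ forces $U$ inside it), so again $u$ is ``non-surviving type''. Either way $T_u$ is a single vertex $t$, namely the one with $K_u$ fixing $\widetilde Y_t^\circ$ (via its invariant horoball). That horoball lies in $\widetilde Y_t^\circ$, and since $K_u < K_w$, the same $\widetilde Y_t^\circ$ meets $\hull_w$ (or contains the relevant horoball for $K_w$), so $t \in T_w$ by part~\eqref{I:non-surviving} or part~\eqref{I:T_hull=H-hull} of Lemma~\ref{L:options_for_T_u}; hence $t \in T_u \cap T_w$. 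A symmetric, easier argument handles the case where $T_w$ is a single vertex or edge: if $\alpha$ can be isotoped off $\Phi(W)$ then it can be isotoped off the smaller $\Phi(U)$, so $K_u$ also has a global fixed point, and the fixed vertex/edge for $K_u$ (determined by which $\widetilde Y_t^\circ$ or $\widetilde\alpha_e$ carries $\hull_u$) is contained in the fixed set $T_w$ because $\hull_u \subseteq \hull_w$ forces the region carrying $\hull_u$ to be incident to the region carrying $\hull_w$.

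It remains to treat the generic case in which both $u$ and $w$ consist of surviving curves and neither $T_u$ nor $T_w$ is a single edge. Then by part~\eqref{I:T_hull=H-hull}, $t \in T_u \iff \hull_u \cap \widetilde Y_t^\circ \neq \emptyset$ and likewise for $w$. Since $\hull_u \subseteq \hull_w$, any vertex $t$ with $\hull_u \cap \widetilde Y_t^\circ \neq \emptyset$ also has $\hull_w \cap \widetilde Y_t^\circ \neq \emptyset$, giving $T_u \subseteq T_w$; as $T_u$ is nonempty ($\hull_u$ is a nonempty convex hull meeting at least one complementary region), we get $T_u \cap T_w = T_u \neq \emptyset$. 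I expect the main obstacle to be bookkeeping at the boundary: making sure the ``$T_u$ is an edge'' sub-cases and the mixed non-surviving/surviving cases are all consistent with the inclusion $\hull_u \subseteq \hull_w$ (e.g. when $\hull_w = \widetilde\alpha_e$ but $\hull_u$ is a single point on it, one must check $T_u$ is an endpoint of $e$ rather than disjoint from $e$), and in verifying that $U \subseteq W$ really does follow from $u \subseteq w$ at the level of the distinguished complementary components rather than merely up to isotopy. None of these require new ideas beyond careful application of Lemma~\ref{L:options_for_T_u} and the containment of limit sets.
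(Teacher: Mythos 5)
Your proposal has a fundamental error in the direction of the containments, which propagates through the entire argument and causes several of the cases to be either unjustified or outright false.

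The issue: if $u \subseteq w$, then $w$ has \emph{more} curves than $u$, so cutting along $w$ produces smaller complementary pieces. Thus the component of $S^z \ssm w$ containing the $z$--puncture is contained in the corresponding component of $S^z \ssm u$, i.e., $W \subseteq U$, not $U \subseteq W$. Correspondingly $K_w < K_u$ (an element fixing all of $w$ fixes the subsimplex $u$), $\hull_w \subseteq \hull_u$, and in your generic case $T_w \subseteq T_u$. You have all of these reversed. This is not cosmetic: your non-surviving-curve case claims that ``if $w$ contains a non-surviving curve, then so does $u$'' (or $U$ sits inside a punctured disc), which is false --- one can have $w = \{c, c'\}$ with $c$ non-surviving bounding a punctured disc around $z$ and $c'$ surviving, and take $u = \{c'\}$, in which case $U$ is a large subsurface and $T_u$ need not be a vertex. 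Similarly, your ``symmetric, easier'' case uses the implication ``$\alpha$ can be isotoped off $\Phi(W)$ $\Rightarrow$ $\alpha$ can be isotoped off the smaller $\Phi(U)$,'' but $\Phi(U)$ is the \emph{larger} surface, so this implication goes the wrong way.

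Beyond the direction error, your plan misses what is actually the only nontrivial case: when $K_u$ acts on $T$ with no global fixed point (so $T_u$ is a minimal invariant subtree) but the smaller group $K_w$ \emph{does} have a global fixed point (so $T_w$ is a fixed vertex or edge). In your reversed world this case looks impossible, but with the correct containment $K_w < K_u$ it is exactly the case requiring a genuine argument. The paper handles it purely Bass--Serre-theoretically: take $g \in K_w$ whose fixed set is $T_w$ and a loxodromic $h \in K_u$ with axis $\ell \subset T_u$; if $\ell$ misses $T_w$, then $\ell$ and $g(\ell)$ are disjoint, and the bridge between them (contained in the subtree $T_u$) must pass through the fixed set $T_w$. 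Note the paper's proof never invokes the hulls in $\mathbb H^2$ or Lemma~\ref{L:options_for_T_u} at all --- it argues directly from the inclusion $K_w < K_u$ and standard facts about group actions on trees (maximal fixed subtree vs.\ minimal invariant subtree). A hull-based argument along your lines can be made to work once the containments are corrected, but it requires case-checking that your sketch waves at rather than resolves, and it is longer than the paper's purely tree-theoretic argument.
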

	
	\begin{proof}
		Since $u \subseteq w$, we have $K_w < K_u$. 
		If $K_u$ has a global fixed point, then $T_u$ is the maximal fixed subtree of $K_u$, and hence $T_w$ is also the maximal fixed subtree of $K_w$.  In this case, $T_u \subseteq T_w$.   If neither has a global fixed point, then $T_u$ and $T_w$ are the minimal invariant subtrees of $K_u$ and $K_w$, respectively, and so $T_w \subseteq T_u$.  In either of these cases, $T_u \cap T_w \neq \emptyset$.
		
		Finally, suppose $T_u$ is a minimal invariant subtree for $K_u$ and $T_w$ is the maximal fixed subtree of $K_w$.  By Lemma~\ref{L:options_for_T_u}, either $T_w$ is a vertex or edge, and in either case, there is an element $g \in K_w$ whose fixed point set is exactly $T_w$.  Since $T_u$ has no global fixed point, there is an axis $\ell \subset T_u$ for an element $h \in K_u$ acting loxodromically on $T_u$.  If $\ell \cap T_w \neq \emptyset$, then $T_u \cap T_w \neq \emptyset$, as required.   On the other hand, if $\ell \cap T_w = \emptyset$, then $\ell \cap g (\ell) = \emptyset$, and the geodesic from $\ell$ to $g (\ell)$ must non-trivially intersect $T_w$.  Since this geodesic is contained in $T_u$, it follows that $T_u \cap T_w \neq \emptyset$. 
	\end{proof}
	
	Recall that we have fixed a finitely generated and purely pseudo-Anosov subgroup $G < \Gamma$ and have passed to an appropriate power of $f$ so that $\Phi_*(G) = \langle f \rangle$.  We have $G_0 = G \cap \pi_1S$, and by our assumptions above, $G_0$ is an infinite, normal subgroup.  Define $\hull_G$ to be the convex hull of the limit set of the action of $G$ on $\partial \mathbb H^2$. Since we are assuming $G \neq G_0$,  the action of $G$ on $\mathbb H^2$ is not by isometries and does not necessarily preserve $\hull_G$. However, since $G_0$  is a normal subgroup of $G$, the limit set of $G_0$ and $G$ in $\partial \mathbb H^2$  are equal and $G_0$ does act isometrically on $\mathbb H^2$ preserving $\hull_G$.
	
	Since $G$ does act by isometries on the Bass--Serre tree $T$, we can use $T$ to produce a geometric model for $G$ as follows: since $G$ is purely pseudo-Anosov and torsion free, no element of $G$ fixes any simplex of $\C(S^z)$.   Hence, $G$ acts freely on $T$ as its vertices and edges are $\Gamma$--equivariantly identified with simplices of $\C^s(S^z)$ in $\Phi^{-1}(\alpha)$. Thus, the minimal invariant subtree, $T_G$ of the action of $G$ on $T$ is again the union of axes of loxodromic element of $G$. A compact fundamental domain for this action can be found by taking the  minimal subtree containing a base vertex  $v \in T_G$ and all the translates of $v$ by a finite set of generators of $G$. Thus, the action of $G$ on $T_G$ gives $G$ a graph of groups decomposition with trivial vertex and edge groups. This proves the following lemma. 
	
	\begin{lemma} \label{L:free action of G} The group $G$ is free. Moreover, the tree $T_G$ has uniformly finite valence and a free, cocompact $G$--action. 
	\end{lemma}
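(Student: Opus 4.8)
The plan is to reduce everything to Bass--Serre theory, the only real input being that $G$ acts freely (and without inversions) on $T$. First I would record that $G$ is torsion free: $\Gamma \cong \pi_1S \rtimes_{f}\mathbb Z$ is torsion free, since $g^k = 1$ forces $\phi(g) = 0$ and hence $g \in \pi_1S$, which is torsion free, and $G<\Gamma$. Because $G$ is purely pseudo-Anosov, it follows that every $g \in G\ssm\{1\}$ is pseudo-Anosov, in particular of infinite order.

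Next comes the key step: $G$ acts freely on $T$ with no inversions. Recall (see \S\ref{S:actions on H and T} and Theorem~\ref{T:fibers_are_trees}) that the vertices and edges of $T$ are $\Gamma$--equivariantly identified with simplices of $\C^s(S^z)\subset\C(S^z)$ lying in $\Phi^{-1}(\alpha)$. Hence if a nontrivial $g \in G$ fixed a vertex of $T$, fixed an edge of $T$, or inverted an edge of $T$, then $g$ would preserve the isotopy class of some essential multicurve on $S^z$. But a pseudo-Anosov mapping class preserves no isotopy class of essential multicurve, contradicting the previous paragraph. So the action of $G$ on $T$ is free and without inversions, and by Bass--Serre theory a group acting freely without inversions on a tree is free; thus $G$ is free. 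Moreover, since $G_0 = G\cap\pi_1S$ is infinite, $G$ is nontrivial, so a free action has no global fixed point on $T$; hence the minimal $G$--invariant subtree $T_G\subseteq T$ is nonempty (it is the union of the axes of the loxodromic elements of $G$; see \cite[Proposition~2.9]{Bestvina-Trees}) and $G$ acts freely on $T_G$.

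For cocompactness, fix a finite generating set $g_1,\dots,g_r$ of $G$ and a base vertex $v\in T_G$, and let $T'\subseteq T_G$ be the convex hull of the finite set $\{v\}\cup\{g_i^{\pm1}v : 1 \le i \le r\}$, a finite subtree. Since $T'$ contains both $v$ and $g_i^{\pm 1}v$, the subtree $g_i^{\pm 1}T'$ meets $T'$ in the vertex $g_i^{\pm 1}v$; inducting over words in the $g_i$ shows that $\bigcup_{g\in G} gT'$ is a connected, $G$--invariant, nonempty subtree of $T_G$, hence equal to $T_G$ by minimality. Therefore $T_G/G$ is the image of the compact set $T'$, so it is a finite graph and the $G$--action on $T_G$ is cocompact. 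Finally, for uniform finite valence one must note that $T$ itself need not be locally finite (the edge groups have infinite index in the vertex groups), so this property is extracted from cocompactness rather than from $T$: because $G$ acts freely without inversions on $T_G$, the quotient $T_G\to T_G/G$ is a covering map of graphs, so each vertex of $T_G$ has the same valence as its image in the finite graph $T_G/G$; letting $N$ be the maximum vertex valence of the finite graph $T_G/G$, every vertex of $T_G$ has valence at most $N$. (Equivalently, $G$ acts on $T_G$ as a graph of groups with trivial vertex and edge groups, recovering that $G$ is free of finite rank.) The only points requiring care are this last local-finiteness subtlety and the verification that freeness of the action genuinely follows from the pseudo-Anosov hypothesis together with the identification of cells of $T$ with simplices of $\C(S^z)$; neither is a serious obstacle.
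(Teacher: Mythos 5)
Your proof is correct and follows essentially the same approach as the paper: freeness of the $G$--action on $T$ from the $\Gamma$--equivariant identification of cells of $T$ with simplices of $\C^s(S^z)$ together with the purely pseudo-Anosov hypothesis, cocompactness of $T_G$ from a finite generating set, and uniform finite valence as a consequence of cocompactness. You simply spell out more of the routine details (torsion-freeness of $\Gamma$, absence of edge inversions, the covering-map argument for valence) than the paper's terse version does.
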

	
	\begin{remark}
		Since $G_0$ is a normal, infinite subgroup of $G$, the tree $T_G$  is also the minimal invariant tree of the action of $G_0$ on $T$. Hence $T_G$ is also the union of the axes of the loxodromic elements of $G_0$.
	\end{remark}

	Since every  element of $G_0$ is loxodromic on $T_G$, and since $G$ and $G_0$ have equal limit sets in $\partial T$ and $\partial \mathbb H^2$, a similar argument as Item \eqref{I:T_hull=H-hull} for Lemma \ref{L:options_for_T_u} shows that the same conclusion holds for $T_G$ and $\hull_G$.
	
	\begin{lemma}
		A vertex $t \in T$ is a vertex of $T_G$ if and only if $\widetilde{Y}_t^\circ \cap \hull_G \neq \emptyset$.
	\end{lemma}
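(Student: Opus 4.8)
The plan is to adapt the argument for part~\eqref{I:T_hull=H-hull} of Lemma~\ref{L:options_for_T_u}, replacing $K_u$ by $G_0$ and $\hull_u$ by $\hull_G$, and bringing in the dynamics of $G_0$ on $\partial\mathbb H^2$. Three preliminaries set this up. First, $G_0$ is infinite and, since $G$ is free but not infinite cyclic (because $G_0\neq 1$), it is a non-elementary, torsion-free Fuchsian group; by Theorem~\ref{T:Kra} no nontrivial element of $G_0$ is parabolic, so each is hyperbolic on $\mathbb H^2$, and by Lemma~\ref{L:free action of G} each is loxodromic on $T$. Second, by the remark preceding the lemma, $T_G$ is the union of the $T$--axes $\ell_g$ of the nontrivial $g\in G_0$. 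Third, from the proof of Lemma~\ref{L:options_for_T_u} we have the dictionary: for nontrivial $g\in G_0$ and a vertex $s\in T$, the $\mathbb H^2$--axis $\gamma_g$ meets $\widetilde Y_s^\circ$ if and only if it crosses two of the components of $p^{-1}(\alpha)$ bounding $\widetilde Y_s$, if and only if $s\in\ell_g$; the point here is that a geodesic entering the region $\widetilde Y_s^\circ$ must exit it through a different boundary geodesic.

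Granting this, the ``if'' direction is immediate: given $t\in T_G$, choose a nontrivial $g\in G_0$ with $t\in\ell_g$; then $\gamma_g\cap\widetilde Y_t^\circ\neq\emptyset$ by the dictionary, and since the endpoints of $\gamma_g$ lie in $\Lambda(G_0)$ we have $\gamma_g\subseteq\hull_G$, so $\hull_G\cap\widetilde Y_t^\circ\neq\emptyset$. The ``only if'' direction is the substantive step, and I expect it to be the main obstacle: one cannot simply transfer a point of $\hull_G\cap\widetilde Y_t^\circ$ onto an axis of $G_0$, because for a general non-elementary Fuchsian group the convex hull of the limit set is strictly larger than the closure of the union of the axes. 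I would instead prove the contrapositive, working one separating geodesic at a time and using density of loxodromic \emph{fixed points} in the limit set rather than density of axes.

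So suppose $t\notin T_G$. Since $T_G\neq\emptyset$, there is a first edge $e_1=[t_0,t_1]$ on the geodesic in $T$ from $T_G$ to $t$, with $t_0\in T_G$ and $t_1\notin T_G$. Let $H\subset\mathbb H^2$ be the open half-plane bounded by $\widetilde\alpha_{e_1}$ that does not contain $\widetilde Y_{t_0}^\circ$, and let $I\subset\partial\mathbb H^2$ be the open arc cut off by the endpoints of $\widetilde\alpha_{e_1}$ on the side of $H$. Because $T_G$ is a subtree containing $t_0$ but not the edge $e_1$, every vertex of the component $T^+$ of $T\ssm e_1^\circ$ containing $t_1$ lies outside $T_G$; moreover $\widetilde Y_t^\circ\subseteq H$, and $H$ is the disjoint union of the regions $\widetilde Y_s^\circ$ and the geodesics $\widetilde\alpha_{e'}$ as $s$ and $e'$ run over the vertices and edges of $T^+$. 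Hence, for nontrivial $g\in G_0$, the axis $\gamma_g$ cannot meet any such $\widetilde Y_s^\circ$ (else $s\in\ell_g\subseteq T_G$) and cannot equal or transversally cross any such $\widetilde\alpha_{e'}$ (the former makes $g$ elliptic on $T$, the latter gives $e'\subseteq T_G$); therefore $\gamma_g\cap H=\emptyset$. Since a geodesic with an endpoint in the open arc $I$ must meet $H$, no loxodromic element of $G_0$ has a fixed point in $I$, and as such fixed points are dense in $\Lambda(G_0)$ we get $\Lambda(G_0)\cap I=\emptyset$. Thus $\Lambda(G_0)$ lies in the complementary closed arc, so $\hull_G=\hull(\Lambda(G_0))$ lies in the complementary closed half-plane, which is disjoint from $H\supseteq\widetilde Y_t^\circ$. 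Therefore $\hull_G\cap\widetilde Y_t^\circ=\emptyset$, which is the contrapositive we wanted, completing the proof.
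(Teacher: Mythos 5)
Your argument is correct and follows the route the paper itself indicates: the paper's ``proof'' of this lemma is the single remark that one should run the argument of Lemma~\ref{L:options_for_T_u}\eqref{I:T_hull=H-hull} with $G_0,\hull_G$ in place of $K_u,\hull_u$, using the equal limit sets of $G$ and $G_0$ (normality) and the fact that every nontrivial element of $G_0$ is loxodromic on $T$. What you add, and rightly identify as the real content, is an explicit proof of $\hull_G\cap\widetilde Y_t^\circ\neq\emptyset\Rightarrow t\in T_G$. In the paper's proof of Lemma~\ref{L:options_for_T_u}\eqref{I:T_hull=H-hull} this direction is asserted immediately after the dictionary ``$t\in T_u$ iff some $K_u$--axis in $\mathbb H^2$ meets $\widetilde Y_t^\circ$,'' but as you observe the convex hull of a limit set is genuinely larger than the union of axes, so that step is not automatic. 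Your contrapositive---cut off $\widetilde Y_t^\circ$ from $T_G$ by a half-plane $H$ bounded by a component of $p^{-1}(\alpha)$; rule out any $G_0$--axis entering $H$ because that would either put an edge of $T^+$ into $T_G$ or make an element of $G_0$ elliptic on $T$; use density of loxodromic fixed points in $\Lambda(G_0)$ to conclude $\Lambda(G_0)\cap I=\emptyset$, hence $\hull_G\cap H=\emptyset$---is the correct and standard way to close this. The supporting facts you invoke (non-elementarity of $G_0$, absence of parabolics in $G_0$ from Theorem~\ref{T:Kra}, the free $G$--action on $T$ and $T_G=\bigcup_{1\neq g\in G_0}\ell_g$) are all established in the paper, so nothing is missing.
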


	\subsection{The $G_0$--quotient and its spine} \label{S:G0 quotient} Since $G_0$ acts freely on $T_G$, there is a $G_0$--equivariant embedding $T_G \to \hull_G$ sending vertices inside the component they are dual to (in a $G$--equivariant way) and sending edges to geodesic segments.  Therefore, we get a surface with a spine
	\[ T_G/G_0 = \sigma_0 \subset \Sigma_0 = \hull_G/G_0.\]
 Figure~\ref{F:Sigma_0 and its spine} gives an  example of $\Sigma_0$ and its spine $\sigma_0$.
	
	\begin{center}
		\begin{figure}[htb]
			\begin{tikzpicture}
				\node at (0,0) {\includegraphics[width=15cm]{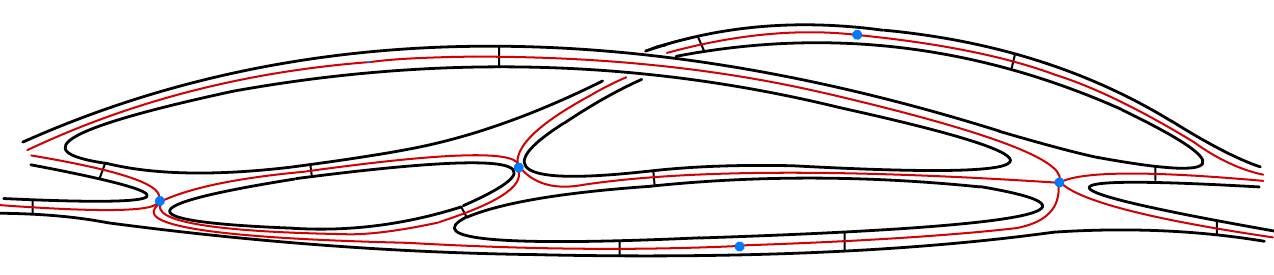}};
			\end{tikzpicture}
			\caption{Part of $\Sigma_0$ and its spine $\sigma_0 \subset \Sigma_0$.  Each edge $\ee \subset \sigma_0$ transversely intersects its dual arc $a_\ee$.} \label{F:Sigma_0 and its spine}
		\end{figure}
	\end{center}

Each edge $e \subset T_G$ intersects exactly one component $\widetilde \alpha_e \subset p^{-1}(\alpha)$ and we define
	\[ \widetilde a_e = \widetilde \alpha_e \cap \hull_G.\] 
We write $a_\ee \subset \Sigma_0$ for the image of $\widetilde a_e$ in $\Sigma_0$ where $e \subset T_G$ is an edge that projects to $\ee$; note that for any two edges $\ee,\ee'$ of $\sigma_0$, $a_
	\ee \cap \ee'$ is empty if $\ee \neq \ee'$, while $a_
	\ee \cap \ee'$ is a single point if $\ee = \ee'$.

	\subsection{Polygons and $G$--quotient} \label{S:polygons and G quotient}

	For each vertex $t \in T_G$, the intersection $\widetilde Y_t \cap \hull_G$ is an even-sided polygon with sides alternating between arcs contained in $p^{-1}(\alpha)$ and those in $\hull_G$.  Indeed, the sides in $p^{-1}(\alpha)$ are precisely the arcs $\widetilde a_e$ where $e$ is an edge of $T_G$ adjacent to $t$.  We let $\widetilde Z_t \subset \hull_G$ be this polygon corresponding to the vertex $t \in T_G$, and we write $\partial_\alpha \widetilde Z_t$ to denote the union of the sides $\widetilde a_e$ over all edges $e$ adjacent to $t$; see Figure~\ref{F:Z and Z}.

	\begin{center}
		\begin{figure}[htb]
			\begin{tikzpicture}[scale=1.25]
				\node at (0,0) {\includegraphics[width=15cm]{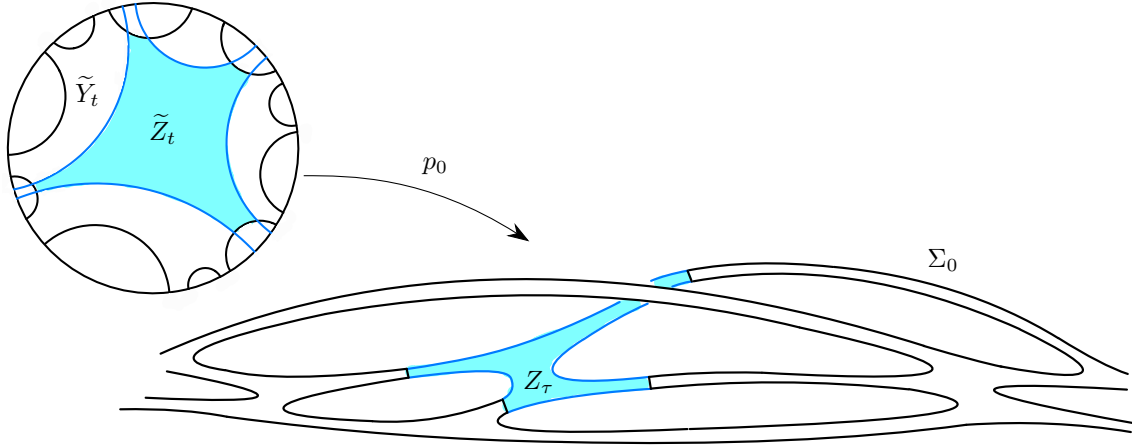}};
				\node at (-4.3,1) {$\widetilde Z_t$};
				\node at (-5.1,1.4) {$\widetilde Y_t$};
				\node at (4,-.4) {$\Sigma_0$};
				\node at (-.3,-1.7) {$Z_\tau$};
				\node at (-1.4,.6) {$p_0$};
				\draw[-\tips] (-2.8,.5) .. controls (-2.3,.5) and (-1.5,.5) .. (-.4,-.2);
			\end{tikzpicture}
			\caption{Left: The polygon $\widetilde Z_t \subset \hull_G \cap \widetilde Y_t$ (shaded).  Right: The ``image'' polygon $Z_\tau$ in $\Sigma_0$.} \label{F:Z and Z}
		\end{figure}
	\end{center}

	Let $\widetilde p_0 \colon \mathbb H^2 \to \widetilde S_0 = \mathbb H^2/G_0$ be the quotient by $G_0$, which contains $\Sigma_0$ as its convex core (by definition), and write $p_0 = \widetilde p_0|_{\hull_G} \colon \hull_G \to \Sigma_0$ for the restriction.  Let $\eta \colon \widetilde S_0 \to S$ be the associated covering corresponding to $G_0 < \pi_1S$, so that $\eta \circ \widetilde p_0 = p$.
	
	Now $\eta^{-1}(\alpha) \cap \Sigma_0$ is a union of the geodesic arcs $a_\ee$ over all edges $\ee$ of $\sigma_0$.  The further restriction of $p_0$ to $\widetilde Z_t$ is injective on $\widetilde Z_t \ssm  \partial_\alpha \widetilde Z_t$ and maps $\partial_\alpha \widetilde Z_t$ into $\eta^{-1}(\alpha)$.  For a vertex $\tau \in \sigma_0$, write $Z_\tau = \widetilde Z_t$ where $t$ is a vertex of $T_G$ with $p_0(t) = \tau$, and write $Z_\tau \to \Sigma_0$ to denote the restriction of $p_0$.  This map is injective, except possibly on the points of $\partial_\alpha \widetilde Z_t$.  As an abuse of notation, we write $Z_\tau \subset \Sigma_0$ (even though it is not necessarily embedded).  See Figure~\ref{F:Z and Z}.

	Since $G_0$ is a normal subgroup of $G$, we have an action of $G/G_0 \cong \mathbb Z $ on $\widetilde S_0$, and we observe that each element of $G/G_0$ acts as a lift of a power of $f$ to the covering space $\widetilde S_0$. The action of $G/G_0$ on $\widetilde S_0$ is free because the action of $G/G_0$ on $\sigma_0$ is free.
	The action of $G/G_0 \cong \mathbb Z $ does not preserve $\Sigma_0$, but we can find a homeomorphism $\frak f \colon \widetilde S_0 \to \widetilde S_0$ so that $\frak{f}(\Sigma_0) = \Sigma_0$ and $\frak f$ is properly isotopic the generator of $G/G_0$ by an isotopy that preserves $\eta^{-1}(\alpha)$. 
	If the generator sends a vertex $\tau \in \sigma_0$ to a vertex $\tau' \in \sigma_0$, then $\frak f(Z_\tau) = Z_{\tau'}$; indeed, we use this to define the isotopy of the generator to the map $\frak f$.
	By further proper isotopy preserving $\eta^{-1}(\alpha)$ and $\Sigma_0$, we may assume $\frak f(\sigma_0) = \sigma_0$.  The action of $\langle \frak f \rangle$ on $\Sigma_0$ is a topological covering space action with compact quotient $\Sigma$ containing a spine $\sigma = \sigma_0/\langle \frak f \rangle$.  
	
	We note that the map $g \mapsto \frak f^{\phi(g)}$ defines a homomorphism $G \to \langle \frak f \rangle$ that descends to an isomorphism $G/G_0 \cong \langle \frak f \rangle$.  Since $\phi|_G$ is surjective, we also note that $\frak f$ is isotopic to a lift of $f$.  Moreover, the projection $T_G \to \sigma_0$ is equivariant with respect to this homomorphism.

\section{Reduction to a diameter bound for $p_0(T_G \cap T_u)$}

In this section we reduce the proof of Theorem \ref{T:reducible} to proving that the diameter of $p_0(T_G \cap T_u)$ is uniformly bounded for all simplices $u \subset \C(S^z)$. This has two steps. First we show that Theorem \ref{T:reducible} follows from a uniform bound on the diameter of $T_G \cap T_u$. Second we show that the diameter bound on $T_G \cap T_u$ follows from the a priori weaker bound on the diameter of $p_0(T_G \cap T_u)$. The proof of Theorem \ref{T:reducible} will then be completed in \S \ref{S:Bounding_intersection_in_sigma_0} where we verify that   $p_0(T_G \cap T_u)$ is uniformly bounded.

\subsection{First reduction}
We give the proof of Theorem \ref{T:reducible} assuming the following proposition.

\begin{proposition} \label{P:bounded diameter} Given $G < \Gamma$ finitely generated and purely pseudo-Anosov in $\Mod(S^z)$, there exists $D >0$ so that for all $u \subset \C(S^z)$ we have
	\[ \diam(T_G \cap T_u) \leq D.\]
\end{proposition}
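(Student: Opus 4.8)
The plan is to carry out the strategy from the proof summary. The first step is a reduction to \emph{deep} simplices. Since $u\mapsto T_u$ is $\Gamma$--equivariant, we have $T_G\cap T_{g(u)}=g(T_G\cap T_u)$ for all $g\in G$, and since $G$ acts cocompactly on $T_G$ with $G/G_0\cong\mathbb Z$ acting by isometries on the spine $\sigma_0=p_0(T_G)$ (Lemma~\ref{L:free action of G} and \S\ref{S:polygons and G quotient}), it suffices to prove the bound under the extra hypothesis that $p_0(T_G\cap T_u)$ lies in a fixed compact subsurface $\Sigma_1\subset\Sigma_0$; call such a $u$ deep. The existence of such a $\Sigma_1$, with $G_1=\pi_1\Sigma_1<G_0<\pi_1S$ finitely generated and purely pseudo-Anosov, follows---using the isometric $\mathbb Z$--action on $\sigma_0$---from a uniform bound on $\diam_{\sigma_0}p_0(T_G\cap T_u)$, which is the subsurface-projection estimate established later in \S\ref{S:Bounding_intersection_in_sigma_0} (cf.~Lemma~\ref{L:translating deep}). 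I take $\Sigma_1$ as given and fix a deep simplex $u$.

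If $u$ contains a non-surviving curve, or more generally if $K_u$ has a global fixed point in $T$, then $T_u$ is a single vertex or a single edge by Lemma~\ref{L:options_for_T_u}, so the bound is trivial. Hence assume $u$ consists of surviving curves and $K_u$ has no global fixed point. Then by Lemma~\ref{L:options_for_T_u}\eqref{I:T_hull=H-hull} a vertex $t$ lies in $T_u$ precisely when $\hull_u\cap\widetilde Y_t^\circ\ne\emptyset$, and by the analogous statement for $T_G$, a vertex $t$ lies in $T_G$ precisely when $\hull_G\cap\widetilde Y_t^\circ\ne\emptyset$; so every vertex of $T_G\cap T_u$ is dual to a complementary region of $p^{-1}(\alpha)$ met by both hulls. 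I call such a vertex $t$ \emph{hull-type} if in addition $\hull_G\cap\hull_u\cap\widetilde Y_t^\circ\ne\emptyset$, and \emph{parallel-type} otherwise. The proof then splits into bounding the subtree of $T_G\cap T_u$ spanned by the hull-type vertices, and bounding the length of each maximal path of parallel-type vertices; combining these with the uniform finite valence of $T_G$ (Lemma~\ref{L:free action of G}) gives the bound for deep $u$, and translating by $G$ gives $D$ for all $u$.

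For the hull-type subtree: since $u$ is deep, $\hull_G\cap\hull_u$ lies in a uniformly bounded neighborhood of $\hull_{G_1}\cap\hull_u$, and since $G_1<\pi_1S$ is finitely generated and purely pseudo-Anosov, the key estimate of \cite{kentleiningerschleimer} bounds $\diam(\hull_{G_1}\cap\hull_u)$, hence $\diam(\hull_G\cap\hull_u)$, independently of $u$. Choosing the equivariant map $\mathbb H^2\to T$ to be $K$--Lipschitz as in \S\ref{S:actions on H and T}, so that a point of $\hull_G\cap\hull_u\cap\widetilde Y_t^\circ$ maps into the $\frac12$--neighborhood of $t$, a convexity argument along the $\mathbb H^2$--geodesic between two such points shows any two hull-type vertices are joined in $T$ by a path whose interior vertices are still dual to regions met by $\hull_G\cap\hull_u$; their distance in $T$ is therefore at most $K\cdot\diam(\hull_G\cap\hull_u)+O(1)$, which bounds this subtree.

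For a maximal parallel-type path $\ell\subset T_G\cap T_u$: over the regions dual to the vertices of $\ell$ the hulls $\hull_G$ and $\hull_u$ are disjoint, so each is cut into strips by the geodesics $\widetilde\alpha_e$, $e\in\ell$, and one extracts boundary geodesics $\delta_G\subseteq\partial\hull_G$ and $\delta_u\subseteq\partial\hull_u$ crossing all of them and ``running parallel'' down $\ell$. If $|\ell|$ is large, then $p_0(\delta_G)\subset\Sigma_1$ is long and must run along a closed boundary geodesic of $\Sigma_1$; that geodesic represents an element of $G_1$, hence is pseudo-Anosov in $\Mod(S^z,z)$, hence fills $S$ by Kra's Theorem~\ref{T:Kra}. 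On the other hand $\delta_u$ covers a component of $\Phi(u)$, a \emph{simple} closed geodesic of $S$, and a simple closed geodesic cannot fellow-travel a filling geodesic for arbitrarily long---a compactness statement on $S$, uniform over the finitely many candidate boundary geodesics of $\Sigma_1$---which bounds $|\ell|$. I expect this parallel-type case to be the main obstacle: extracting honest, uniformly parallel geodesics $\delta_G,\delta_u$ from the hypothesis that the hulls stay disjoint over a long sub-path of $T$, and then making ``a simple closed geodesic cannot fellow-travel a filling geodesic too long'' quantitative and uniform in $u$, both require care, and this is where Kra's Theorem and a compactness argument must be combined; the other delicate ingredient, the construction of $\Sigma_1$, rests on the ``dead end'' mechanism for subsurface projections and the finite valence of $\sigma_0$.
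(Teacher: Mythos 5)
Your proposal tracks the paper's own two-step reduction (first to deep simplices via the subsurface-projection bound on $\diam_{\sigma_0}p_0(T_G\cap T_u)$, then a hull-type/parallel-type decomposition of $T_G\cap T_u$) and the same two endgame arguments: the Kent--Leininger--Schleimer estimate for the hull subtree, and a Kra/fellow-traveling contradiction for parallel paths. The only imprecision is in the hull step, where you say $\hull_G\cap\hull_u$ lies in a bounded neighborhood of $\hull_{G_1}\cap\hull_u$; what one actually gets (and uses) is $\hull_G\cap\hull_u\subset N_R(\hull_{G_1})\cap\hull_u$, and the KLS argument must then be applied with the enlarged neighborhood $N_R(\hull_{G_1})$ rather than to $\hull_{G_1}\cap\hull_u$ itself --- but this is exactly what the paper does, so the route is the same.
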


\begin{proof}[Proof of Theorem~\ref{T:reducible} assuming Proposition~\ref{P:bounded diameter}] Let $P \colon T \to T_G$ be the closest point projection.  Observe that $P$ maps any connected subset of $T \ssm T_G$ to a point.  In particular, for any geodesic segment $\sigma$ outside $T_G$, $P(\sigma)$ is a point.  Now suppose $u \subset \C(S^z)$ is any simplex and $\sigma$ is a geodesic in $T_u$.  Then $\sigma = \sigma_0 \sigma_1\sigma_2$, where $\sigma_1$ is a (possibly empty) geodesic segment in $T_G \cap T_u$ and $\sigma_0,\sigma_2$ meet $T_G$ in at most one point.  It follows that $P(\sigma)$ is either a point, or $P(\sigma) = \sigma_1$.  In either case, $\diam(P(\sigma)) \leq D$ by Proposition~\ref{P:bounded diameter}.
	
	Fix a vertex $t \in T_G$ and let $u \in  \C(S^z)$ be a curve in the simplex that is the image of $t$ in $\Phi^{-1}(\alpha) \subset \C(S^z)$. Consider the orbit map $G \to \C(S^z)$ given by $g \to g (u)$.  Write $d_T$ for the (geodesic) metric on $T_G$ and $d_\C$ for the metric on the $1$--skeleton of  $\C(S^z)$. 
	\begin{claim}\label{CL:lower_bound} $d_T( t,g(t) ) \leq 2Dd_\C( u,g(u) ) + D$.
	\end{claim}
	Assuming the claim, we complete the proof of the theorem.  Fix a finite generating set for $G$ and write $d_G$ for the word metric.  A standard application of the triangle inequality implies that the orbit map $G \to G \cdot u \subset \C(S^z)$ is lipschitz with respect to $d_G$.  Next, note that the orbit map $G \to G \cdot t \subset T_G$ is a $(\kappa,\lambda)$--quasi-isometry, for some $\kappa,\lambda$, and thus by the claim
	\[ d_G(1,g) \leq \kappa d_T(t,g (t)) + \lambda \leq 2\kappa D d_\C(u,g (u))  + \kappa D + \lambda.\]
	Therefore, the orbit map $G \to G \cdot u \subset \C(S^z)$ is a quasi-isometric embedding, and hence $G$ is convex cocompact by Theorem~\ref{T:ccc char}.
	
	\begin{proof}[Proof of Claim \ref{CL:lower_bound}]
		Let $n = d_\C(u,g (u))$ and write $u = u_0,u_1,\ldots,u_n = g (u)$ for the vertices of a $\C(S^z)$--geodesic from $u $ to $g(u)$.  Consider the set of simplices
		\[ w_{2j} = \{u_j\} \mbox{ and } w_{2i+1} = \{u_i,u_{i+1}\},\]
		for $j=0,\ldots,n$ and $i=0,\ldots,n-1$.  In particular, $w_{2j,2j+2} \subset w_{2j+1}$ for all $j=0,\ldots,n-1$.  By Lemma \ref{L:nested_simplices}, this implies 	$$ T_{w_k} \cap T_{w_{k+1}} \neq \emptyset 
		\text{ for all } k = 0,\ldots, 2n-1.$$ We also observe that since $u$ is a vertex of the simplex defined by $t$, we have $\{t\}= T_t \subseteq T_u$  and likewise  $\{g(t)\} = T_{g(t)} \subseteq T_{g (u)}$.
		
		Now construct a path $\gamma \colon [0,2n+1] \to T$ by
		\begin{itemize}
			\item $\gamma(0) = t$,
			\item $\gamma(2n+1) = g (t)$,
			\item $\gamma([k,k+1]) \subset T_{w_k}$ is a geodesic segment.
		\end{itemize}
		This is possible because $T_{w_k} \cap T_{w_{k+1}} \neq \emptyset$ for all $k=0,\ldots,2n-1$. Hence, we can define $\gamma(k+1)$ to be any point in the intersection of these subtrees, and then take $\gamma\vert_{[k,k+1]}$ to be a geodesic segment in $T_{w_k}$ connecting the points $\gamma(k),\gamma(k+1) \in T_{w_k}$.  At the endpoints, we note that $\gamma(0) = t \in T_u = T_{w_0}$ and $\gamma(2n+1) = g (t) \in T_{g (u)} = T_{w_{2n}}$.
		
		Now consider the path $P \circ \gamma \colon [0,2n+1] \to T_G$.  As noted above, since
		\[ \gamma([k,k+1]) \subset T_{w_k},\]
		Proposition \ref{P:bounded diameter} implies $\diam(P \circ \gamma([k,k+1])) \leq D$.  Now, $P \circ \gamma$ is a path in $T_G$ between $t$ and $g (t)$, and thus we have
		\[ d_T(t,g (t)) \leq \diam(P \circ \gamma) \leq (2n+1)D = 2Dd_\C(u,g (u)) + D,\]
		which  proves the claim.
	\end{proof}
	Having proved Claim~\ref{CL:lower_bound}, we have  proved the theorem assuming Proposition~\ref{P:bounded diameter}.
\end{proof}

\subsection{Second reduction} \label{S:second reduction}

Having reduced the proof of Theorem~\ref{T:reducible} to Proposition~\ref{P:bounded diameter}, which asserts a uniform bound on the diameter of $T_G \cap T_u$, we proceed to our second reduction.  The goal of this section is thus to deduce such a uniform bound from the following bound in $\sigma_0 = T_G/G_0$.

\newcommand{\Pweakbound}{Given $G < \Gamma$ finitely generated and purely pseudo-Anosov in $\Mod(S^z)$, there exists $D' > 0$ so that for any simplex $u \subset \C(S^z)$,
	\[ \diam(p_0(T_G \cap T_u)) \leq D',\]
	where the diameter of $p_0(T_G \cap T_u)$ is computed in $\sigma_0$.}

\begin{proposition} \label{P:weak bounded diameter} \Pweakbound \end{proposition}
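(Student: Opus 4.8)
The plan is to decorate the edges and vertices of $T_G$ with Masur--Minsky subsurface-projection data extracted from $\hull_G$, and then pit the twisting of $f$ in the annular covers (and in the pseudo-Anosov complementary pieces) against the projection of $u$. Throughout I may assume $u$ consists of surviving curves, since otherwise $T_u$ is a single vertex by Lemma~\ref{L:options_for_T_u}\eqref{I:non-surviving}; put $v = \Phi(u)\subset\C(S)$. For an edge $e\subset T_G$, identify the annular cover $A_e = \mathbb H^2/K_e\to S$ with core $\alpha_e$; the two boundary geodesics of $\partial\hull_G$ crossing $\widetilde\alpha_e$ descend to arcs of $\bar A_e$, and I let $\Delta_e\subset\A(A_e)$ be this pair. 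For a vertex $t\subset T_G$, the polygon $\widetilde Z_t = \widetilde Y_t\cap\hull_G$ descends to the cover $\Upsilon_t$ whose convex core is $Y_{i(t)}$, and I let $\Delta_t\subset\AC(Y_{i(t)})$ be the arc-and-curve projection of the $\hull_G$-sides of $Z_t$. Since $f$ fixes $\alpha$ and each complementary piece, these decorations are $G$--equivariant: $\Delta_{g(e)} = f^{\phi(g)}(\Delta_e)$ and $\Delta_{g(t)} = f^{\phi(g)}(\Delta_t)$, where $f^{\phi(g)}$ acts through its restriction to $\alpha_e$, resp.\ $Y_{i(t)}$ (Lemma~\ref{L:equivariant decoration}).

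The first main step is a ``dead end'' principle: there is $N = N(G)$ so that if an edge $e$ of $T_G$ has an arc of $\Delta_e$ meeting an arc of $\pi_{A_e}(v)$ more than $N$ times, or a vertex $t$ has $d_{\AC(Y_{i(t)})}(\Delta_t, \pi_{Y_{i(t)}}(v)) > N$, then $e$ (resp.\ $t$) caps off $T_G\cap T_u$: neither the hull-type subtree nor any parallel-type subtree of $T_G\cap T_u$ extends past it (this is the content of Lemmas~\ref{L:small_proj_parallel_type} and~\ref{L:dead ends}). Since $T_G\cap T_u$ is assembled from these pieces, it follows that along any geodesic $\ell\subset T_G\cap T_u$, every edge $e$ and vertex $t$ away from the two endpoints has bounded projection data --- $d_{\A(A_e)}(\Delta_e,\pi_{A_e}(v))\le N+1$ and $d_{\AC(Y_{i(t)})}(\Delta_t,\pi_{Y_{i(t)}}(v))\le N$ --- whenever the relevant projection is defined.

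Next I would run a pigeonhole-plus-twisting argument in $\sigma_0$. Suppose $\diam_{\sigma_0}(p_0(T_G\cap T_u))$ is huge; choose $\tau,\tau'\in p_0(T_G\cap T_u)$ far apart in $\sigma_0$, lift them to $x,x'\in T_G\cap T_u$, and set $\ell = [x,x']\subset T_G\cap T_u$, so that $p_0(\ell)$ is a path of length $\ge d_{\sigma_0}(\tau,\tau')$ between two points far apart in $\sigma_0$. Because $\sigma = \sigma_0/\langle\frak f\rangle$ is a finite graph (Lemma~\ref{L:free action of G}), $\sigma_0$ is a $\mathbb Z$-cover of a finite graph with $G/G_0\cong\mathbb Z$ acting cocompactly by ``translations''; hence a long $p_0(\ell)$ must sweep through many translates of a fixed finite fundamental domain, and by pigeonhole on its finitely many edges it hits two edges $\bar e,\bar e'$ with $\bar e' = \frak f^{\,k}(\bar e)$ for $|k|$ arbitrarily large. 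Lifting to $\ell$ gives edges $e, e' = g(e)$ with $g\in G$, $\phi(g) = k$. If $f$ twists about $\alpha_e$, then equivariance gives $d_{\A(A_e)}(\Delta_e,\Delta_{e'}) = d_{\A(A_e)}(\Delta_e, f^{k}\Delta_e)\gtrsim|k|$, while the dead-end bound forces $d_{\A(A_e)}(\Delta_e,\pi_{A_e}(v))$ and $d_{\A(A_e)}(\Delta_{e'},\pi_{A_e}(v))$ to be $\le N+1$, so $d_{\A(A_e)}(\Delta_e,\Delta_{e'})\le 2N+O(1)$ --- a contradiction once $|k|$ is large. If instead $f$ restricts trivially to $\alpha_e$ but some vertex $t$ of $\ell$ adjacent to $e$ lies on a pseudo-Anosov component $Y_j$, the identical contradiction runs in $\AC(Y_j)$, using that $f|_{Y_j}$ has positive translation length there. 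This bounds $|k|$, hence $d_{\sigma_0}(\tau,\tau')$, and produces the uniform $D'$.

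The main obstacle is exactly the dichotomy in the last step: $f$ may restrict to the identity on some complementary subsurfaces and act with zero twisting on some annuli, and on the part of $T_G$ lying over these ``inert'' pieces no twisting contradiction is available. One must therefore show that a long geodesic in $T_G\cap T_u$ cannot live entirely over the inert part and must instead repeatedly cross an annulus or visit a vertex where $f$ has positive translation length; this forces an analysis of how $\sigma_0$ is assembled from the polygons $Z_\tau$, using compactness of the quotient spine $\sigma$ together with bookkeeping of which edges and vertices are ``active'', and is the substance of \S\ref{S:Bounding_intersection_in_sigma_0}.
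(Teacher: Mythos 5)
Your outline matches the paper's strategy almost exactly: the decorations $\Delta_e,\Delta_t$, their $f^{\phi(g)}$--equivariance, the ``dead end'' bounds on interior edges and vertices of $T_G\cap T_u$ (Lemmas~\ref{L:small_proj_parallel_type} and~\ref{L:dead ends}), and then the tension between the loxodromic action of $f$ on $\A(A_\ee)$ (resp.\ $\AC(Y_\tau)$) and the bounded distance to $\pi_\ee(v)$ (resp.\ $\pi_\tau(v)$), packaged in the paper as Lemma~\ref{L:bounded clusters} and Corollary~\ref{C:finitely_many_small_labels} rather than as an explicit pigeonhole on $\frak f^k$. For the parallel subtrees this is enough, because Lemma~\ref{L:Parallel_type_vertices} hands you a boundary geodesic $\delta_u\subset\partial\hull_u$ crossing every dual geodesic $\widetilde\alpha_{e_i}$, so $\pi_e(v)\neq\emptyset$ automatically along the interior and the twisting contradiction always fires; combined with Lemma~\ref{L:ubiquitous} this gives Lemma~\ref{L:bound_on_parallel_type_diam}.

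The genuine gap is the one you flag in your last paragraph and then defer: along the hull subtree there can be arbitrarily long stretches on which \emph{every} relevant projection $\pi_e(v)$, $\pi_t(v)$ is empty (your caveat ``whenever the relevant projection is defined'' is doing all the work). On such a stretch neither the dead-end bound nor the twisting estimate says anything, and this is not a bookkeeping issue about which annuli are ``inert'' for $f$ --- it occurs even when every edge is a twist edge, simply because $v$ may miss $\alpha_e$ and $Y_t$ entirely. The paper closes this in Lemma~\ref{L:lengths outside} by a different mechanism: pigeonhole on the $E$ edge-orbits produces $e_1=g(e_0)$ with $g\in G$ along the stretch; emptiness of all the projections lets one homotope the connecting path off $p^{-1}(v)$ (or off some $p^{-1}(\alpha_i)$), and concatenating $g$--translates yields a bi-infinite $g$--invariant path disjoint from the full preimage of a simple closed curve $w$. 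Hence $g$ stabilizes $\hull_{u_0}$ for a curve $u_0$ with $\Phi(u_0)=w$, contradicting that $G$ is purely pseudo-Anosov. This is where the purely pseudo-Anosov hypothesis enters for the hull subtree, and without supplying an argument of this kind (or some substitute) your proof does not close: the conclusion would fail for, e.g., a subgroup containing a reducible element, so the step cannot be soft. Once that length bound outside $\mathcal E(v,B_2)\cup\mathcal V(v,B_2)$ is in place, your covering/pigeonhole argument assembles the final bound exactly as in Lemma~\ref{L:hull subtree bound}.
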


We postpone the proof of Proposition~\ref{P:weak bounded diameter} to \S \ref{S:Bounding_intersection_in_sigma_0} and focus this subsection on using Proposition \ref{P:weak bounded diameter} to prove Proposition~\ref{P:bounded diameter}. Our proof of Proposition~\ref{P:bounded diameter} has two parts. First we show that it suffices to verify the proposition for multicurves $u$ where $p_0(T_G \cap T_u)$ lands sufficiently deep in a specific subgraph of $\sigma_0$. Then, we verify that $p_0(T_G \cap T_u)$ is uniformly bounded for these ``deep'' multicurves.  

The following easy fact allows us to adjust simplices by elements of $G$.

\begin{lemma} \label{L:translating suffices} For any $g \in G$ and simplex $u \subset \C(S^z)$ we have
	\[ g(T_G \cap T_u) = T_G \cap T_{g (u)}.\]
\end{lemma}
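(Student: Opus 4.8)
The statement to prove is Lemma~\ref{L:translating suffices}: for any $g \in G$ and simplex $u \subset \C(S^z)$, we have $g(T_G \cap T_u) = T_G \cap T_{g(u)}$.

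This is a short equivariance statement, so the proof plan is correspondingly short. The plan is to establish two facts and then combine them: first, that $g(T_G) = T_G$ for every $g \in G$; and second, that $g(T_u) = T_{g(u)}$ for every $g \in G$ (indeed every $g \in \Gamma$) and every simplex $u$. The first is immediate from the definition of $T_G$ as the minimal $G$--invariant subtree of the action of $G$ on $T$ (equivalently, the union of axes of loxodromic elements of $G$), which is recorded right before Lemma~\ref{L:free action of G}; minimal invariant subtrees are preserved by the group, so $g(T_G) = T_G$.

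For the second fact, recall that $T_u$ was defined in \S\ref{S:hulls and trees} either as the minimal $K_u$--invariant subtree (when $K_u$ has no global fixed point) or as the maximal $K_u$--fixed subtree (when it does). The key observation is that conjugation carries stabilizers to stabilizers: since $\Gamma$ acts on $T$ preserving the identification of vertices and edges with simplices of $\C^s(S^z)$ in $\Phi^{-1}(\alpha)$ (as noted at the end of \S\ref{S:actions on H and T}), and since $K_u = \Stab_{\pi_1 S}(u)$ with $\pi_1 S \trianglelefteq \Gamma$, we get $g K_u g^{-1} = \Stab_{\pi_1 S}(g(u)) = K_{g(u)}$ for $g \in \Gamma$. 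Therefore $g$ carries any $K_u$--invariant (resp. $K_u$--fixed) subtree to a $K_{g(u)}$--invariant (resp. $K_{g(u)}$--fixed) subtree, and being a homeomorphism of $T$ it carries minimal to minimal and maximal to maximal. Hence $g(T_u) = T_{g(u)}$. Combining, $g(T_G \cap T_u) = g(T_G) \cap g(T_u) = T_G \cap T_{g(u)}$, since $g$ acts as a bijection on $T$.

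There is no real obstacle here; the only point requiring a small amount of care is checking that the minimal/maximal characterizations of $T_u$ are genuinely transported by the conjugation, i.e. that one cannot land in the ``global fixed point'' case for $u$ and the ``no global fixed point'' case for $g(u)$. This is automatic because $g$ conjugates $K_u$ isomorphically onto $K_{g(u)}$ while intertwining their actions on $T$ via the homeomorphism $g$, so $K_u$ has a global fixed point in $T$ if and only if $K_{g(u)}$ does. Thus the same clause of the definition applies to both, and the identity $g(T_u) = T_{g(u)}$ holds in all cases.
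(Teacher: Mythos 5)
Your proof is correct and follows essentially the same route as the paper's: note $g(T_G)=T_G$, use $gK_ug^{-1}=K_{g(u)}$ to get $g(T_u)=T_{g(u)}$ (handling both the minimal-invariant and maximal-fixed cases), and intersect. Your extra remark that $g$ intertwines the $K_u$- and $K_{g(u)}$-actions, so the two clauses of the definition cannot get crossed, is a small but sensible elaboration of what the paper leaves implicit.
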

\begin{proof} Since $g \in G$, we have $g (T_G) = T_G$.  Since $K_{g (u)} = g K_u g^{-1}$, by considering the two cases (minimal invariant subtree or maximal fixed subtree), we see that $g (T_u) = T_{g (u)}$.  Therefore, we have
	\[ T_G \cap T_{g (u)} = g (T_G) \cap g (T_u) = g(T_G \cap T_u).\qedhere\]
\end{proof}

Lemma \ref{L:options_for_T_u}\eqref{I:fixed_sets} says that if $T_u$ has finite diameter, then in fact the diameter is at most $1$. Thus it will suffice to examine $T_G \cap T_u$ only for simplices $u \subset \C(S^z)$ where $T_u$ has infinite diameter. In particular, by Lemma~\ref{L:options_for_T_u}\eqref{I:non-surviving} we can assume all curves of $u$  are surviving.

\subsubsection{Reduction to deep simplices}

Let $e_1,\ldots,e_r$ be a set of $\langle \frak f \rangle$--orbit representatives of edges of $\sigma_0$.  Let $\sigma_1 \subset \sigma_0$ be a connected subgraph containing $e_1,\ldots,e_r$ so that the distance in $\sigma_0$ from any edge $e_i$ to a point outside $\sigma_1$ is at least $D'+2$, where $D'$ is the constant from Proposition \ref{P:weak bounded diameter}.
We further assume the following for each boundary component $\delta$ of $\partial \Sigma_0$:  if $\delta^*$ is the minimal length loop in $\sigma_0$ that is freely homotopic in $\Sigma_0$ to $\delta$, then there exists $i \in \mathbb Z$ so that $\frak f^i(\delta^*) \subset \sigma_1$.  This is possible since there are only finitely many $\langle \frak f \rangle$--orbits of boundary components of $\Sigma_0$. Since $\sigma_0$ has no valence $1$ vertices by virtue of being the quotient of axes of loxodromics, we can enlarge $\sigma_1$ to ensure it also has no valence $1$ vertices and that $\sigma_1$ contains all edges with endpoints in its vertex set. We say that a simplex $u \subset \C^s(S^z)$ is \emph{deep} if $N_2(p_0(T_G \cap T_u)) \subset\sigma_1$ where $N_2(\cdot)$ is the $2$--neighborhood in $\sigma_0$. 

The next lemma, combined with Lemma~\ref{L:translating suffices}, shows that it suffices to verify Proposition~\ref{P:bounded diameter} for deep simplices.

\begin{lemma} \label{L:translating deep} For any simplex $u \subset \C(S^z)$, there exists $g \in G$ so that $g(u)$ is a deep simplex.  That is,
	\[ N_2(p_0(T_G \cap T_{g(u)})) \subset \sigma_1.\]
\end{lemma}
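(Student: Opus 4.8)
The plan is to use the cocompactness of the $\langle \frak f\rangle$--action on $\sigma_0$ to translate $p_0(T_G\cap T_u)$ into the fundamental-domain region $\sigma_1$, invoking Lemma~\ref{L:translating suffices} to realize the translation by an element of $G$, and invoking Proposition~\ref{P:weak bounded diameter} to control how far $p_0(T_G\cap T_u)$ can stick out. By Lemma~\ref{L:options_for_T_u}\eqref{I:fixed_sets},\eqref{I:non-surviving} we may assume $T_u$ has infinite diameter and all curves of $u$ are surviving, so that $T_G\cap T_u$ (if nonempty) is an honest subtree on which Proposition~\ref{P:weak bounded diameter} gives $\diam(p_0(T_G\cap T_u))\le D'$ in $\sigma_0$; if it is empty there is nothing to prove.

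First I would observe that, since $\langle\frak f\rangle\cong G/G_0$ acts cocompactly on $\sigma_0$ with the edges $e_1,\dots,e_r$ a set of orbit representatives, every edge of $\sigma_0$ is $\frak f^i(e_j)$ for some $i\in\mathbb Z$, $j\in\{1,\dots,r\}$. Pick any edge (or vertex) $\tau$ of the subtree $p_0(T_G\cap T_u)$. Choose $g\in G$ with $\frak f^{\phi(g)}$ carrying $\tau$ onto one of the chosen orbit representatives $e_j$ (using that the projection $T_G\to\sigma_0$ is equivariant with respect to $g\mapsto\frak f^{\phi(g)}$, as recorded at the end of \S\ref{S:polygons and G quotient}). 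Then by Lemma~\ref{L:translating suffices}, $p_0(T_G\cap T_{g(u)})=\frak f^{\phi(g)}\bigl(p_0(T_G\cap T_u)\bigr)$ contains the edge $e_j\subset\sigma_1$.

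Next I would bound the rest of $p_0(T_G\cap T_{g(u)})$. By Proposition~\ref{P:weak bounded diameter}, every point of $p_0(T_G\cap T_{g(u)})$ lies within $\sigma_0$--distance $D'$ of the edge $e_j$. By the defining property of $\sigma_1$ — namely that the distance in $\sigma_0$ from any $e_i$ to a point outside $\sigma_1$ is at least $D'+2$ — the entire $(D'+2)$--neighborhood of $e_j$ lies in $\sigma_1$; in particular $p_0(T_G\cap T_{g(u)})$ together with its $2$--neighborhood $N_2(\cdot)$ lies in $\sigma_1$. Hence $g(u)$ is deep. The routine points to double-check are that $p_0(T_G\cap T_u)$ is connected (it is the $p_0$--image of the subtree $T_G\cap T_u$, and $p_0$ restricted to $T_G$ is the quotient map $T_G\to\sigma_0$, hence sends connected sets to connected sets) and that $g$ exists with the required value of $\phi(g)$ (it does, since $\phi|_G$ is surjective onto $\mathbb Z$, which is exactly the index set for the $\frak f$--orbit).

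I do not expect a serious obstacle here: the content is entirely front-loaded into the construction of $\sigma_1$ (with its $D'+2$ buffer) and into Proposition~\ref{P:weak bounded diameter}. The only mild subtlety is bookkeeping about whether to translate an edge or a vertex of $p_0(T_G\cap T_u)$ onto an orbit representative, and handling the degenerate cases ($T_G\cap T_u=\emptyset$, or $T_u$ of bounded diameter) up front via Lemma~\ref{L:options_for_T_u}; these are dispatched in the first paragraph above.
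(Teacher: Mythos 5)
Your proof is correct and follows essentially the same route as the paper: pick a piece of $p_0(T_G\cap T_u)$, translate it by a power of $\frak f$ onto a chosen orbit-representative edge $e_j\subset\sigma_1$, lift that power of $\frak f$ to an element $g\in G$ via the surjection $\phi|_G$ and the equivariance of $p_0|_{T_G}$, and then use Proposition~\ref{P:weak bounded diameter} together with the $(D'+2)$--buffer built into $\sigma_1$ to conclude that $N_2(p_0(T_G\cap T_{g(u)}))\subset\sigma_1$. The one small wrinkle you flag parenthetically --- that $p_0(T_G\cap T_u)$ might be a single vertex rather than containing an edge --- is left equally implicit in the paper's proof and is harmless: translate an adjacent edge onto $e_j$ instead and the buffer still absorbs the extra unit of distance.
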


\begin{proof} For any $u$, there is $j \in \mathbb Z$ and one of the chosen $\langle \frak f \rangle$--orbit representatives of edges $e_i$ so that
	\[ e_i \subset \frak f^j(p_0(T_G \cap T_u)). \]
	Then $N_2(\frak f^j(p_0(T_G \cap T_u))) \subset \sigma_1$ by Proposition~\ref{P:weak bounded diameter}.
	
	Now we let $g \in G$ be any element that maps to $\frak f^j$ by the homomorphism $G \to \langle \frak f \rangle$.  Since $p_0|_{T_G} \colon T_G \to \sigma_0$ is equivariant with respect to this homomorphism, Lemma~\ref{L:translating suffices} implies
	\[ p_0(T_G \cap T_{g(u)}) = p_0(g (T_G \cap T_u)) = \frak f^j(p_0(T_G \cap T_u)).\]
	Combining this with the previous paragraph proves the lemma.
\end{proof}

\subsubsection{Subtree decomposition and bounding $T_G \cap T_u$ for deep simplices} \label{S:subtrees}
We now use Proposition \ref{P:weak bounded diameter} to uniformly bound the diameter of $T_G \cap T_u$ when $u$ is a deep simplex (and thus for any simplex, by Lemmas~\ref{L:translating suffices} and \ref{L:translating deep}). We start by dividing the vertices of $T_G \cap T_u$ into two sets.

\begin{definition}
	Given a simplex $u \subset \C^s(S^z)$, say that a vertex $t \in T_G \cap T_u$ is of {\em hull type} if
	\[ \hull_G \cap \hull_u \cap  \widetilde Y_t^\circ \neq \emptyset.\]
	Any vertex that is not hull type is called {\em parallel type}.
\end{definition}
The reason for the name ``parallel type" comes from Lemma \ref{L:Parallel_type_vertices} below, which says parallel type vertices must arise from single components of $\partial \hull_G$ and $\partial \hull_u$ running parallel to each other.

The next lemma verifies that the set of hull type vertices span a subtree of $T_G \cap T_u$.

\begin{lemma} \label{L:hull type subtree} If the set of hull type vertices is nonempty, then it spans a subtree of $T_G \cap T_u$.  That is, every vertex of the smallest subtree containing all the hull type vertices is of hull type.
\end{lemma}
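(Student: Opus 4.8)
The plan is to show that if $t$ and $t'$ are hull type vertices of $T_G \cap T_u$, then every vertex $s$ on the geodesic $[t,t']$ in $T$ is also hull type; since $T_G \cap T_u$ is itself a subtree (being an intersection of subtrees), the geodesic $[t,t']$ lies in $T_G \cap T_u$, so this suffices. By hypothesis we have points $x \in \hull_G \cap \hull_u \cap \widetilde Y_t^\circ$ and $x' \in \hull_G \cap \hull_u \cap \widetilde Y_{t'}^\circ$. Both $\hull_G$ and $\hull_u$ are convex subsets of $\mathbb H^2$, so the geodesic segment $[x,x'] \subset \mathbb H^2$ lies in $\hull_G \cap \hull_u$. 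The first step is to understand which regions $\widetilde Y_s^\circ$ this segment passes through: the edges of $T$ crossed by the $T$--geodesic $[t,t']$ are dual to exactly the components $\widetilde \alpha_e \subset p^{-1}(\alpha)$ separating $\widetilde Y_t^\circ$ from $\widetilde Y_{t'}^\circ$, and any path in $\mathbb H^2$ from $x$ to $x'$ must cross each such $\widetilde \alpha_e$.

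The key point to nail down is that the convex segment $[x,x']$ actually enters the interior $\widetilde Y_s^\circ$ for each vertex $s \in [t,t']$, not merely that it crosses the separating geodesics. I would argue this as follows: the components of $\mathbb H^2 \ssm p^{-1}(\alpha)$ are themselves convex (each $\widetilde Y_s$ is a convex hyperbolic polygon-like region bounded by complete geodesics in $p^{-1}(\alpha)$, since $\alpha$ is realized geodesically), and $\mathbb H^2 \ssm p^{-1}(\alpha) = \bigsqcup_s \widetilde Y_s^\circ$ up to the measure-zero set $p^{-1}(\alpha)$. The geodesic $[x,x']$ meets $p^{-1}(\alpha)$ in a discrete set of points (transverse intersections, since $[x,x']$ is not contained in any component of $p^{-1}(\alpha)$ as its endpoints lie in distinct complementary regions), and between consecutive such intersection points it lies entirely in one component $\widetilde Y_s^\circ$. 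Reading off these components in order as one travels from $x$ to $x'$ gives exactly the sequence of vertices along the $T$--geodesic from $t$ to $t'$, because the dual-tree structure records precisely this adjacency data. Hence for each such $s$, the segment $[x,x']$ — which lies in $\hull_G \cap \hull_u$ — meets $\widetilde Y_s^\circ$, so $\hull_G \cap \hull_u \cap \widetilde Y_s^\circ \neq \emptyset$, i.e. $s$ is hull type.

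Finally I would observe that every such $s$ genuinely lies in $T_G \cap T_u$. Since $u$ consists of surviving curves when $T_u$ has infinite diameter (Lemma~\ref{L:options_for_T_u}\eqref{I:non-surviving}), and the hull type hypothesis forces $\hull_u$ to meet more than one complementary region (so $T_u$ is not an edge or vertex), part~\eqref{I:T_hull=H-hull} of Lemma~\ref{L:options_for_T_u} gives $s \in T_u$ from $\hull_u \cap \widetilde Y_s^\circ \neq \emptyset$; similarly $\widetilde Y_s^\circ \cap \hull_G \neq \emptyset$ gives $s \in T_G$ by the lemma immediately following Lemma~\ref{L:free action of G}. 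So $[t,t'] \subset T_G \cap T_u$ and all its vertices are hull type, which is the claim.

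The main obstacle I anticipate is the second step: carefully justifying that a geodesic between two points in distinct complementary regions passes through the \emph{interior} of every intermediate region in the tree, rather than possibly running along a wall $\widetilde \alpha_e$ or skipping a region. This needs the geodesic realization of $\alpha$ (giving convexity of complementary regions and transversality of intersections) together with the precise identification of the dual tree's combinatorics with the pattern of crossings — both of which are available from the setup in \S\ref{S:actions on H and T}, so the difficulty is one of careful bookkeeping rather than a genuine gap.
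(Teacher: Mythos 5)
Your proof is correct and is in essence the same argument as the paper's: pick points of $\hull_G \cap \hull_u$ in the two hull--type regions, use convexity to get a geodesic in $\hull_G \cap \hull_u$, and read off the intermediate regions from the dual--tree combinatorics. The paper compresses your middle step into the single phrase ``adjusting our equivariant map $\mathbb H^2 \to T$ if necessary, we may assume it sends $[x,y]$ to a geodesic from $t$ to $s$''; you instead spell out directly why the $\mathbb H^2$--geodesic must enter the interior of each intermediate $\widetilde Y_s^\circ$, which is fine and arguably clearer. One small remark: your final paragraph (re-deriving $s \in T_G \cap T_u$ from Lemma~\ref{L:options_for_T_u}\eqref{I:T_hull=H-hull} and the companion statement for $T_G$) is redundant, since as you already note at the start, $T_G \cap T_u$ is a subtree, so the $T$--geodesic $[t,t']$ between two of its vertices automatically lies inside it; the appeal to that lemma also has a mild edge-case wrinkle (it quietly presumes there are at least two hull--type vertices so that $T_u$ is neither a point nor an edge), which is harmless precisely because the paragraph is unnecessary.
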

\begin{proof} If $t,s \in T_G \cap T_u$ are hull type vertices, let $x,y \in \hull_G \cap \hull_u$ be points with $x \in \widetilde Y_t^\circ$ and $y\in \widetilde Y_s^\circ$.  Then the geodesic $[x,y] \subset \mathbb H^2$ is contained in $\hull_G \cap \hull_u$ by convexity.  Adjusting our equivariant map $\mathbb H^2 \to T$ if necessary (see \S\ref{S:actions on H and T}), we may assume it sends $[x,y]$ to a geodesic from $t$ to $s$ in $T_G \cap T_u$.  Every vertex of this geodesic is therefore of hull type. 
\end{proof}

We call the subtree of $T_G \cap T_u$ from Lemma \ref{L:hull type subtree} the {\em hull subtree}, and denote it $T^\hull_{u,G}$.  Each maximal connected subgraph of the complement of $T^\hull_{u,G}$ is also a subtree of $T_G \cap T_u$. We call these components the {\em parallel  subtrees} of $T_G \cap T_u$.  To avoid arguing in separate cases, we allow the possibility that $T^\hull_{u,G}$ is empty (i.e.~if there are no hull type vertices) in which case $T_G \cap T_u$ is the unique parallel subtree.  If $T_G \cap T_u = T^\hull_{u,G}$, then we consider any parallel subtree to be empty.

Before bounding the diameter of the hull and  parallel subtrees, we need some additional terminology.   Let $\Sigma_1$ be the compact subsurface of $\Sigma_0$ defined by
\[\Sigma_1 = \bigcup_{\tau \in \sigma_1^{(0)}} Z_\tau. \] 
We note that if $\tau,\tau'$ are endpoints of an edge $\ee \subset \sigma_1$, then there are corresponding arcs $\partial_\alpha Z_\tau,\partial_\alpha Z_{\tau'}$ which are identified in $\Sigma_0$ (hence in $\Sigma_1$) and which transversely intersect $\ee$. Conversely, if $\tau,\tau' \in \sigma_1^{(0)}$ are vertices for which arcs of $\partial_\alpha Z_\tau$ and $\partial_\alpha Z_{\tau'}$ are identified in $\Sigma_1$, then this arc is transverse to an edge $\varepsilon \subset \sigma_0$, which must be in $\sigma_1$ since its endpoints are.  It follows that the inclusion $\sigma_1 \to \Sigma_1$ is a homotopy equivalence. 
Let $G_1 < G_0$ be the image of the fundamental group of $\Sigma_1$ in $G_0 = \pi_1\Sigma_0$. Equivalently, $G_1 < G_0$ is the image of the fundamental group of $\sigma_1$ inside $G_0 = \pi_1 \sigma_0$.

Let $\widetilde \sigma_1 \subset T_G$ be the component of $p_0^{-1}(\sigma_1)$ that is $G_1$--invariant and define 
\[ \hull_G^1  =  \bigcup_{t \in \widetilde \sigma_1^{(0)}} \widetilde Z_t.\]
Note that $\hull_G^1$ is the minimal, closed, $G_1$--invariant subspace of $\hull_G$ that projects to $\Sigma_1$.
We also let $\hull_{G_1}$ be the convex hull of the limit set of $G_1$.  

Let $R$ be the maximum of the diameters of the polygons $Z_\tau$ over all vertices $\tau \in \sigma_1$ and observe that
\[ \hull_G^1 \subset N_R(\hull_{G_1}),\]
since $\sigma_1$ contains no valence $1$ vertices.  To see this, note that any closed loop in $\sigma_1$ without backtracking that visits every vertex of $\sigma_1$, has geodesic representative $\gamma$ in $\Sigma_0$ that meets $Z_\tau$ for every vertex $\tau \in \sigma_1$.  Therefore, for every vertex $t \in \widetilde \sigma_1$, there is a geodesic in the preimage of $\gamma$ that is invariant by an infinite cyclic subgroup of $G_1$ and passes through $\widetilde Z_t$.   Since any such geodesic is contained in $\hull_{G_1}$, every point of $\hull_G^1$ is within $R$ of a point of $\hull_{G_1}$.

We now explain how to bound the diameter of the hull subtree of $T_G \cap T_u$ for deep simplices.

\begin{lemma} \label{L:bounding the hull subtree} There is a constant $D_\hull>0$ so that for any deep simplex $u \subset \C^s(S^z)$, the diameter of $T^\hull_{u,G}$ is at most $D_\hull$.
\end{lemma}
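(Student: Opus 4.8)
The plan is to reduce the diameter bound on $T^\hull_{u,G}$ to the analogous bound for hulls in $\mathbb H^2$ coming from \cite{kentleiningerschleimer}, using the fact that for a deep simplex, $\hull_G \cap \hull_u$ lies in a uniformly bounded neighborhood of $\hull_{G_1} \cap \hull_u$, where $G_1 < \pi_1 S$ is finitely generated and purely pseudo-Anosov. First I would recall the key input: since $G_1$ is finitely generated and purely pseudo-Anosov as a subgroup of $\pi_1 S < \Mod(S^z)$, the results of \cite{kentleiningerschleimer} (as described in the introduction and \S\ref{S:fibers and trees}) give a uniform constant $D_1$ so that $\diam(\hull_{G_1} \cap \hull_u) \leq D_1$ for every simplex $u \subset \C(S^z)$. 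I would then argue that every hull type vertex $t \in T^\hull_{u,G}$ has its dual region $\widetilde Y_t^\circ$ meeting $\hull_G \cap \hull_u$, and — because $u$ is deep — that this intersection is forced to lie in $\hull_G^1 \subset N_R(\hull_{G_1})$; precisely, if $t \in T^\hull_{u,G}$ then $p_0(t) \in \sigma_1$ (using deepness, which says $N_2(p_0(T_G \cap T_u)) \subset \sigma_1$, and $t \in T_G \cap T_u$), so $\widetilde Y_t^\circ \cap \hull_G$ is contained in a $G_1$-translate of $\hull_G^1$, hence within $R$ of $\hull_{G_1}$.

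Next I would run the Lipschitz-map argument from \S\ref{S:actions on H and T}. Choose two hull type vertices $t, s$ realizing the diameter of $T^\hull_{u,G}$, with points $x \in \widetilde Y_t^\circ \cap \hull_G \cap \hull_u$ and $y \in \widetilde Y_s^\circ \cap \hull_G \cap \hull_u$. By convexity, $[x,y] \subset \hull_G \cap \hull_u$; using the $K$-Lipschitz equivariant map $\mathbb H^2 \to T$ (and Lemma~\ref{L:hull type subtree} to route $[x,y]$ through the hull subtree), we get $d_T(t,s) \leq K \, d_{\mathbb H^2}(x,y) + c$ for universal constants. Now $x$ and $y$ both lie within $R$ of $\hull_{G_1}$ and both lie in $\hull_u$; projecting to the closest points of $\hull_{G_1}$ and then noting these projected points lie within $R + (\text{bounded})$ of $\hull_u$ (the projection of a point of $\hull_u$ that is $R$-close to $\hull_{G_1}$ lands in a bounded neighborhood of $\hull_u$, by a standard CAT($0$)/hyperbolic estimate), we conclude that $d_{\mathbb H^2}(x,y)$ is bounded by $\diam(\hull_{G_1} \cap N_{R'}(\hull_u)) + 2R \leq D_1 + 2R' + 2R$ for a suitable $R'$, appealing to the coarse intersection bound from \cite{kentleiningerschleimer} (which holds for a bounded neighborhood just as well, up to enlarging the constant). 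Setting $D_\hull = K(D_1 + 2R' + 2R) + c$ completes the bound.

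The main obstacle I anticipate is the passage from "$\hull_G \cap \hull_u$ is $R$-close to $\hull_{G_1}$ and meets $\hull_u$" to an actual bound on $d_{\mathbb H^2}(x,y)$ in terms of $\diam(\hull_{G_1}\cap\hull_u)$ — that is, controlling the coarse intersection of $\hull_{G_1}$ with a neighborhood of $\hull_u$ rather than $\hull_u$ itself, and making sure the deepness hypothesis genuinely confines $x$ and $y$ to the $G_1$-orbit of $\hull_G^1$ and not merely to $\hull_G$. This requires care with the covering space $\widetilde S_0 \to S$ and the relationship between $\hull_G^1$, $\hull_{G_1}$, and the lift of $\alpha$; one must check that the region $\widetilde Y_t^\circ$ dual to a hull type vertex whose image is in $\sigma_1$ really does sit inside a translate of $\hull_G^1$, which follows from the definition $\hull_G^1 = \bigcup_{t \in \widetilde\sigma_1^{(0)}} \widetilde Z_t$ and the identification $\widetilde Z_t = \widetilde Y_t \cap \hull_G$. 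The rest is routine hyperbolic geometry and bookkeeping with the Lipschitz constants.
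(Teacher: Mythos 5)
Your proposal follows the paper's own argument: reduce via deepness to the inclusion $\hull_G \cap \hull_u = \hull_G^1 \cap \hull_u \subset N_R(\hull_{G_1}) \cap \hull_u$, invoke the coarse-intersection bound of \cite{kentleiningerschleimer} for the finitely generated, purely pseudo-Anosov subgroup $G_1 < \pi_1 S$ (noting that the ``$1$'' in their argument can be replaced by any fixed $R$), and then transfer the $\mathbb H^2$--diameter bound back to $T^\hull_{u,G}$ via the Lipschitz equivariant map $\mathbb H^2 \to T$. This is essentially the paper's proof. One small simplification you could make: once you know $[x,y] \subset \hull_G \cap \hull_u \subset N_R(\hull_{G_1}) \cap \hull_u$, the bound $d_{\mathbb H^2}(x,y) \le \diam\bigl(N_R(\hull_{G_1}) \cap \hull_u\bigr)$ is immediate, and the closest-point-projection / CAT(0) detour you describe is unnecessary; also ``a $G_1$--translate of $\hull_G^1$'' is just $\hull_G^1$ itself (you likely meant a $G_0$--translate, but since $T^\hull_{u,G}$ is connected and $p_0(T^\hull_{u,G}) \subset \sigma_1$, it lies in a single component of $p_0^{-1}(\sigma_1)$, which one may take to be $\widetilde\sigma_1$).
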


\begin{proof} We may assume that $T_u$ has infinite diameter, since otherwise it has diameter at most $1$, according to Lemma~\ref{L:options_for_T_u}\eqref{I:fixed_sets}, and the conclusion is trivial.  In particular, $(\widetilde Z_t \ssm \partial_\alpha \widetilde Z_t) \cap \hull_u \neq \emptyset$ if and only if $t \in T_{u,G}^\hull$ by Lemma~\ref{L:options_for_T_u}\eqref{I:T_hull=H-hull}.
	Since $\hull_G = \bigcup_{t \in T_G^{(0)}} \widetilde Z_t$ and $T_{u,G}^\hull \subset \widetilde \sigma_1$  (because $u$ is a deep simplex) we have
	\[ \hull_G \cap \hull_u = \hull_G^1 \cap \hull_u.\]
	
	Choose our equivariant map $\mathbb H^2 \to T$ to be Lipschitz (see \S\ref{S:actions on H and T}).   This map sends $\hull_G \cap \hull_u$ to a set of Hausdorff distance at most $\frac12$ from  $T_{u,G}^\hull$, and thus it suffices to prove a bound on the diameter of the intersection $\hull_G \cap \hull_u$.  
	
	To prove such a bound, first observe that
	\[ \hull_G \cap \hull_u = \hull_G^1 \cap \hull_u \subset N_R(\hull_{G_1}) \cap \hull_u.\]
	Now $G_1$ is finitely generated because $\Sigma_1$ is compact and $G_1$ is a purely pseudo-Anosov subgroup of $\pi_1S < \Mod(S^z,z)$ because $G_0$ is a purely pseudo-Anosov subgroup.   The argument in \S5 of \cite{kentleiningerschleimer} shows that if $H <\pi_1S < \Mod(S^z,z)$ is finitely generated and purely pseudo-Anosov, then  there is uniform bound on the diameter of $N_R(\hull_H) \cap \hull_u$. In particular, there is a bound on the diameter of $N_R(\hull_{G_1}) \cap \hull_u$ determined only by $G_1$, which thus also bounds the diameter of $\hull_G \cap \hull_u$.
\end{proof}
\begin{remark} The proof in \S5 of \cite{kentleiningerschleimer} actually proves a bound on $N_1(\hull_H) \cap \hull_u$, but the ``1" was an arbitrary choice, and the same proof applies replacing $1$ with any constant $R >0$.
\end{remark}

To bound the diameter of the parallel subtrees,  we need the following result, which  justifies the name of ``parallel type'' for the vertices that are not hull type.

\begin{lemma}\label{L:Parallel_type_vertices}
	Let $u \subset \C^s(S^z)$ be a multicurve such that $T_u$ has infinite diameter, and let  $t_0,\dots,t_n$ be the vertices of  an edge path  in $T_G \cap T_u$. Let $e_i$ be the edge from $t_{i-1}$ to $t_i$ and $\widetilde \alpha_i$ be the geodesic in $p^{-1}(\alpha)$ that is dual to the edge $e_i$. If each $t_i$ is of parallel type, there then exists geodesics  $\delta_G \subseteq \partial \hull_G$ and $\delta_u \subseteq \partial \hull_u$ so that 
	\begin{itemize}
		\item $\delta_u$ and $\delta_G$ intersect each $\widetilde \alpha_i$ transversely;
		\item $\delta_u$ and $\delta_G$ do not intersect in $\widetilde{Y}_{t_i}$ for any $i \in \{0,\dots,n\}$
	\end{itemize}
\end{lemma}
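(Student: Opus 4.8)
The plan is to construct $\delta_G$ and $\delta_u$ as boundary geodesics of the respective hulls that "run parallel" through the chain of regions $\widetilde Y_{t_0}, \ldots, \widetilde Y_{t_n}$.

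First I would analyze a single parallel type vertex $t_i$. By Lemma~\ref{L:options_for_T_u}\eqref{I:T_hull=H-hull} and the definition of $T_{u,G}^\hull$, since $t_i \in T_G \cap T_u$ is not of hull type, both $\hull_G \cap \widetilde Y_{t_i}^\circ$ and $\hull_u \cap \widetilde Y_{t_i}^\circ$ are nonempty, but $\hull_G \cap \hull_u \cap \widetilde Y_{t_i}^\circ = \emptyset$. The region $\widetilde Y_{t_i}$ is a hyperbolic surface with geodesic boundary (a component of $\mathbb H^2 \ssm p^{-1}(\alpha)$, so all of its boundary geodesics lie in $p^{-1}(\alpha)$). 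Both $\hull_G$ and $\hull_u$, being convex, intersect $\widetilde Y_{t_i}$ in convex subsets whose boundaries in the interior $\widetilde Y_{t_i}^\circ$ consist of geodesic arcs running between boundary geodesics of $\widetilde Y_{t_i}$. Since the two convex sets $\hull_G \cap \widetilde Y_{t_i}$ and $\hull_u \cap \widetilde Y_{t_i}$ are disjoint in the interior but each meets the two boundary geodesics $\widetilde\alpha_i$ and $\widetilde\alpha_{i+1}$ dual to the two edges of the path at $t_i$ (this uses that $t_i$ lies on both $T_G$ and $T_u$ and is an interior vertex of the edge path, so the edges $e_i, e_{i+1}$ are in both $T_G$ and $T_u$), there is a geodesic arc $\delta_{G,i} \subset \partial\hull_G \cap \widetilde Y_{t_i}$ and a geodesic arc $\delta_{u,i} \subset \partial\hull_u \cap \widetilde Y_{t_i}$, each crossing both $\widetilde\alpha_i$ and $\widetilde\alpha_{i+1}$, and separating from each other inside $\widetilde Y_{t_i}$. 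I would pin down $\delta_{G,i}$ as the arc "facing" $\hull_u$ and vice versa.

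Next I would show these arcs match up across edges. At the geodesic $\widetilde\alpha_i = \widetilde Y_{t_{i-1}} \cap \widetilde Y_{t_i}$, the boundary $\partial\hull_G$ crosses $\widetilde\alpha_i$ at exactly the points where $\partial\hull_G$ enters/exits, and by convexity of $\hull_G$ the portion of $\partial\hull_G$ on one side of $\widetilde\alpha_i$ continues the portion on the other side. The key point is that $\delta_{G,i-1}$ and $\delta_{G,i}$ meet $\widetilde\alpha_i$ at the same endpoint of $\hull_G \cap \widetilde\alpha_i$ (the one on the "$\hull_u$ side"), because the parallel-type condition forces $\hull_u$ to stay on a consistent side of $\partial\hull_G$ along the whole path. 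So concatenating the arcs $\delta_{G,i}$ gives a geodesic arc — in fact one can extend it to a complete geodesic $\delta_G \subseteq \partial\hull_G$ by taking the boundary geodesic of $\hull_G$ containing this concatenation (noting $\partial\hull_G$ is a union of complete geodesics). Likewise for $\delta_u$. Then $\delta_G$ and $\delta_u$ each cross every $\widetilde\alpha_i$ transversely (since each $\delta_{G,i}, \delta_{u,i}$ does), and they are disjoint in each $\widetilde Y_{t_i}$ by the parallel-type disjointness established in the previous step; in particular they do not intersect in any $\widetilde Y_{t_i}$.

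The main obstacle I anticipate is the compatibility/consistency argument: ensuring that the locally-chosen arcs $\delta_{G,i}$ in consecutive regions actually extend one another along a single boundary geodesic of $\hull_G$, rather than switching to a different boundary component when crossing $\widetilde\alpha_i$, and likewise that the "side" of $\partial\hull_G$ on which $\hull_u$ sits is globally consistent along the path. This requires using that the whole edge path $t_0, \ldots, t_n$ consists of parallel type vertices — not just each one individually — so that $\hull_G \cap \hull_u$ avoids every $\widetilde Y_{t_i}^\circ$, which is precisely what prevents $\partial\hull_G$ and $\partial\hull_u$ from "swapping sides." Care is also needed at the two endpoints $t_0$ and $t_n$: there we only know one edge ($e_1$, respectively $e_n$) lies on the path, but since $t_0, t_n \in T_G \cap T_u$ we still get that $\partial\hull_G$ and $\partial\hull_u$ each cross that edge's dual geodesic, which is all the statement requires for $\widetilde\alpha_1, \ldots, \widetilde\alpha_n$.
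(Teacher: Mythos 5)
Your proposal follows essentially the same strategy as the paper's proof: both identify, at each parallel-type vertex, the ``facing'' arcs of $\partial\hull_G$ and $\partial\hull_u$ and argue that these arcs glue up into single boundary geodesics as one crosses each $\widetilde\alpha_i$. The one place where the two write-ups diverge is exactly the step you flag as the main obstacle. You argue by local endpoint-matching in $\mathbb H^2$: the arc $\delta_{G,i-1}\subset\partial\hull_G$ in $\widetilde Y_{t_{i-1}}$ must terminate on $\widetilde\alpha_i$ at the endpoint $x_i$ of $\hull_G\cap\widetilde\alpha_i$ that lies on the $\hull_u$--side (because the complementary component of $\widetilde Z_{t_{i-1}}$ containing $\hull_u\cap\widetilde Y_{t_{i-1}}$ is bounded by a single $\partial\hull_G$--side, and similarly for $\delta_{G,i}$ in $\widetilde Y_{t_i}$), so the two arcs share the vertex $x_i$ and hence lie on the same complete boundary geodesic. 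The paper instead goes to the circle at infinity: it picks the points $x_i\in\partial\hull_G\cap\widetilde\alpha_i$ and $y_i\in\partial\hull_u\cap\widetilde\alpha_i$ bounding the gap, shows the ideal endpoints $\widetilde\alpha_i^\pm$ are nested into two disjoint arcs $I_+,I_-\subset\partial\mathbb H^2$, and then observes that if the boundary geodesic of $\hull_u$ through $y_i$ failed to contain $y_{i+1}$, it would have to exit into $I_+$, forcing it to cross the geodesic segment from $x_i$ to $x_{i+1}$ (which lies in $\hull_G$) and producing a point of $\hull_G\cap\hull_u$ in some $\widetilde Y_{t_j}$, contradicting parallel type. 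Both arguments are sound; your local matching is arguably more elementary, while the paper's circle-at-infinity version sidesteps having to verify explicitly that the complementary component of the polygon $\widetilde Z_{t_i}$ containing $\hull_u$ is bounded by a single $\partial\hull_G$--arc running all the way from $\widetilde\alpha_i$ to $\widetilde\alpha_{i+1}$. If you carry out your version, you should include that verification (it follows from the connectedness of $\hull_u\cap\widetilde Y_{t_i}$ and the fact that each complementary component of the polygon meets $\partial\hull_G$ in exactly one side), which is what makes $\delta_{G,i}$ well-defined as a single arc.
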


\begin{proof}
	Note, each $t_i$ being of parallel type means that any geodesics satisfying the first item must automatically satisfy the second. Hence it suffices to produce the geodesics  $\delta_G \subseteq \partial \hull_G$ and $\delta_u \subseteq \partial \hull_u$ that intersect each $\widetilde \alpha_i$.
	
	Convexity ensures  that $\hull_G$ and $ \hull_u$ intersect each $\widetilde \alpha_i$ in  a (possibly non-compact) non-empty, closed interval.  If the vertices $t_{i-1}$ and $t_i$ are both of parallel type, then these intervals are disjoint, hence there must be $x_i \in \partial \hull_G \cap \widetilde \alpha_i$ and $y_i \in \partial \hull_u \cap \widetilde \alpha_i$ so that the open interval of $\widetilde \alpha_i$ between $ x_i$ and $y_i$ does not intersect either $\hull_G$ or $\hull_u$. Moreover, the $x_i$ and $y_i$ must be arranged so that the geodesic from $x_i$ to $x_{i+1}$ does not cross the geodesic from $y_i$ to $y_{i+1}$. Let $\widetilde \alpha_i^+ \in \partial \mathbb H^2$ be the endpoint of the subray of $\widetilde \alpha_i$ starting at $y_i$ and passing through $x_i$. Similarly, let $\widetilde \alpha_i^-\in \partial \mathbb H^2$ be the endpoint of the subray of $\widetilde \alpha_i$ starting at $x_i$ and passing through $y_i$; see Figure \ref{F:parallel_type}. Since the geodesic from $x_i$ to $x_{i+1}$ does not cross the geodesic from $y_i$ to $y_{i+1}$, there are disjoint arcs $I_+,I_- \subset \partial \mathbb H^2$ so that $\widetilde \alpha_1^+, \dots \widetilde \alpha_n^+ \subset I_+$ and  $\widetilde \alpha_1^-, \dots \widetilde \alpha_n^- \subset I_-$.
	
	Let $\delta_i$ be the component of $\partial \hull_u$ that contains $y_i$ for $i \in \{0,\dots, n-1\}$. If $\delta_i$ does not also include $y_{i+1}$, then $\delta_i$ must have an endpoint on the arc of $\partial \mathbb{H}^2$ between $\widetilde \alpha^+_i$ and $\widetilde \alpha^+_{i+1}$ (contained in $I_+$). But that would require $\delta_i$ to cross the geodesic from $x_i$ to $x_{i+1}$ as shown in Figure \ref{F:parallel_type}. 
	\begin{figure}[htb]
		\begin{tikzpicture}
			\node at (0,0) {\includegraphics[width=4cm]{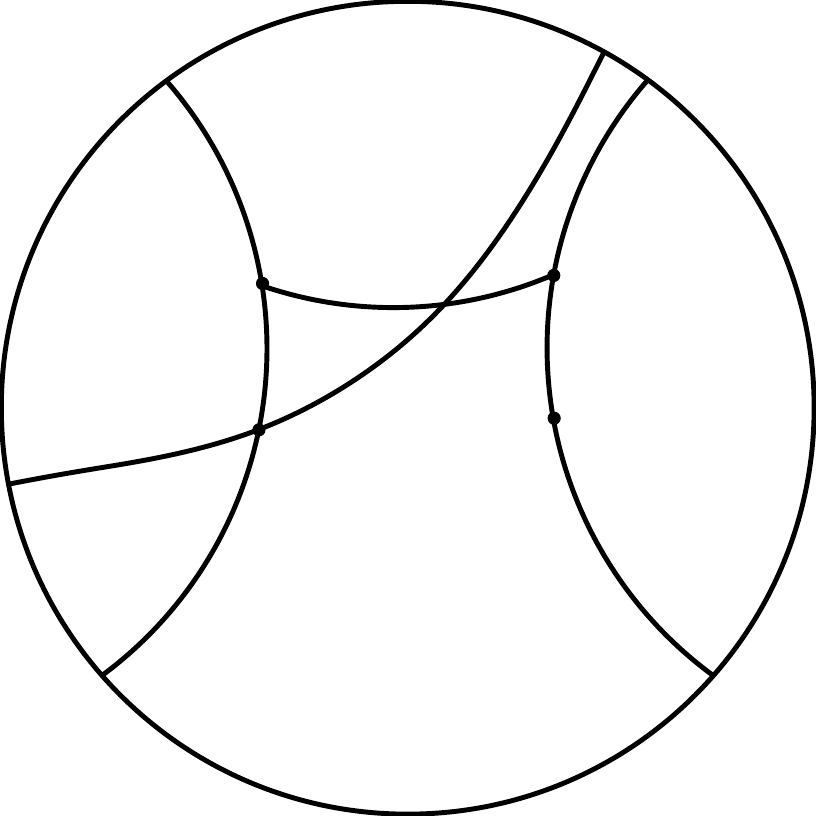}};
			\node at (-1,0) {$y_i$};
			\node at (1.1,0) {$y_{i+1}$};
			\node at (-1,.7) {$x_i$};
			\node at (1.2,.7) {$x_{i+1}$};
			\node at (-1.4,1.75) {$\widetilde \alpha_i^+$};
			\node at (1.6,1.75) 	{$\widetilde \alpha_{i+1}^+$};
			\node at (-1.6,-1.65) {$\widetilde \alpha_i^-$};
			\node at (1.9,-1.65) 	{$\widetilde \alpha_{i+1}^-$};
			\node at (.34,1) { $ \delta_i$};	
		\end{tikzpicture}
		\caption{Arrangement of $x_i$ and $y_i$.} \label{F:parallel_type}
	\end{figure}
	\noindent Since the geodesic from $x_i$ to $x_{i+1}$ is contained in $\hull_G$ and $\delta_i$ is contained in $\hull_u$, this would contradict that $t_{i+1}$ is a parallel type vertex for $i \in \{0,\dots, n-1\}$. Hence there is a single boundary component $\delta_u \subset \partial \hull_u$ that contains all the $y_i$. A completely analogous argument shows that there is a single boundary component $\delta_G \subseteq \partial \hull_G$ that contains all the $x_i$.
\end{proof}

We also need this basic fact about quadrilaterals in the hyperbolic plane, the proof of which is left as an exercise in hyperbolic geometry.

\begin{lemma}\label{L:wide_rectangles_are_narrow}
	For each $r \geq0$ and $0<\epsilon \leq 1$, there exists $C \geq 0$  so the following holds. Let $\gamma_1,\gamma_2,\gamma_3, \gamma_4$ be the 4-sides of a convex quadrilateral in $\mathbb{H}^2$, labeled so that $\gamma_1$ is opposite $\gamma_3$. If $d(\gamma_1,\gamma_3) \geq C$, then there exists subsegments  $s_2\subseteq \gamma_2$, $s_4 \subseteq \gamma_4$,  each of length at least $r$, so that $s_2 \subseteq N_\epsilon(s_4)$ and $s_4 \subseteq N_\epsilon(s_2)$.
\end{lemma}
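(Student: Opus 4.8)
The plan is to prove the lemma in two stages: first use coarse ($\delta$--hyperbolic) geometry to locate a long subsegment of $\gamma_2$ lying within a \emph{fixed} distance (depending only on the hyperbolicity constant of $\mathbb{H}^2$) of $\gamma_4$, and then upgrade that fixed distance to the prescribed $\epsilon$ using the exponential convergence of hyperbolic geodesics with nearby endpoints. Throughout, write $v_1,v_2,v_3,v_4$ for the vertices of the quadrilateral in cyclic order, so that $\gamma_i$ runs from $v_i$ to $v_{i+1}$ (indices mod $4$), and fix $\delta>0$ with $\mathbb{H}^2$ $\delta$--hyperbolic; then any geodesic quadrilateral is $2\delta$--thin (cut it into two $\delta$--thin triangles by a diagonal), so in particular $\gamma_2 \subset N_{2\delta}(\gamma_1 \cup \gamma_3 \cup \gamma_4)$.

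For the first stage, I would begin by observing that $x\mapsto d(x,\gamma_1)$ and $x\mapsto d(x,\gamma_3)$, being distance functions to convex subsets of $\mathbb{H}^2$, are convex when restricted to the geodesic $\gamma_2$; since they vanish at $v_2$ and $v_3$ respectively, their $2\delta$--sublevel sets along $\gamma_2$ are an initial subsegment $[v_2,x_1]$ and a terminal subsegment $[x_3,v_3]$ of $\gamma_2$. Once $C>4\delta$, these two subsegments are disjoint, for a common point would lie within $2\delta$ of both $\gamma_1$ and $\gamma_3$ and force $d(\gamma_1,\gamma_3)\le 4\delta$. Hence the complementary subsegment $\gamma_2^{\mathrm{mid}}:=[x_1,x_3]\subset\gamma_2$ avoids the $2\delta$--neighborhoods of $\gamma_1$ and $\gamma_3$, so $2\delta$--thinness of the quadrilateral forces $\gamma_2^{\mathrm{mid}}\subset N_{2\delta}(\gamma_4)$. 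Moreover, since $x\mapsto d(x,\gamma_1)$ is $1$--Lipschitz along $\gamma_2$, is at most $2\delta$ at $x_1$, and is at least $d(\gamma_1,\gamma_3)-d(x_3,\gamma_3)\ge C-2\delta$ at $x_3$, the length of $\gamma_2^{\mathrm{mid}}$ is at least $C-4\delta$.

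For the second stage, I would let $b,b'\in\gamma_4$ be nearest points of $x_1,x_3$, so $d(x_1,b),d(x_3,b')\le 2\delta$, the segment $\beta':=[b,b']$ lies in $\gamma_4$, and $|\beta'|\ge (C-4\delta)-4\delta = C-8\delta$. The conclusion then follows from the standard hyperbolic--geometry fact referred to in the statement: for each $K\ge0$, $0<\epsilon\le1$, $r\ge0$ there is $N=N(K,\epsilon,r)$ so that if $[a,a']$ and $[b,b']$ are geodesic segments with $d(a,b),d(a',b')\le K$ and both of length at least $N$, then they contain subsegments of length at least $r$ that are mutually $\epsilon$--close (immediate from the fact that in $\mathbb{H}^2$ the distance from a point of one segment to the line carrying the other decays like $Ke^{-t}$, with $t$ the distance to the nearer endpoint). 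Applying this with $K=2\delta$ to $\beta=\gamma_2^{\mathrm{mid}}\subset\gamma_2$ and $\beta'\subset\gamma_4$ produces the required $s_2,s_4$, so it suffices to take $C:=8\delta+N(2\delta,\epsilon,r)+1$, which also satisfies $C>4\delta$ as needed above.

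I expect the only mildly delicate point to be the bookkeeping in the first stage, namely controlling the ``end effects'' where $\gamma_2$ runs near $\gamma_1$ or near $\gamma_3$ rather than near $\gamma_4$; the convexity of $x\mapsto d(x,\gamma_i)$ along $\gamma_2$, together with the observation that the two end--subsegments cannot overlap once $C>4\delta$, handles this cleanly. (Note the argument only uses that the sides are geodesic segments meeting at $v_2$ and $v_3$, not the convexity of the quadrilateral.) The remaining input, the exponential--convergence lemma of the second stage, is the routine hyperbolic computation alluded to in the statement and can be carried out directly in the upper half--plane model.
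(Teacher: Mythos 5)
The paper does not actually supply a proof of this lemma; it says explicitly that the proof ``is left as an exercise in hyperbolic geometry,'' so there is nothing internal to compare your argument against. That said, your proposal is a correct and clean way to do the exercise. The decomposition of $\gamma_2$ into an initial piece that tracks $\gamma_1$, a terminal piece that tracks $\gamma_3$, and a long middle piece that $2\delta$--thinness forces to lie within $2\delta$ of $\gamma_4$ is exactly the right coarse first step, and the convexity of $d(\cdot,\gamma_i)$ along $\gamma_2$ is a tidy way to justify that the decomposition is into three consecutive subintervals.

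Two small points worth cleaning up if you wrote this out in full. First, at the boundary points $x_1$ and $x_3$ you have $d(x_1,\gamma_1)=2\delta$ rather than $>2\delta$, so thinness does not directly force $d(x_1,\gamma_4)\le 2\delta$ at those two points; this is handled by observing that the strict inequality holds on the open interval $(x_1,x_3)$ and then taking limits, so $\gamma_2^{\mathrm{mid}}\subset\overline{N_{2\delta}(\gamma_4)}$, which is all you use. Second, the cited exponential--convergence fact is correct, but the sentence ``the distance from a point of one segment to the line carrying the other decays like $Ke^{-t}$'' needs the usual caveat that for points deep inside $\gamma_2^{\mathrm{mid}}$ the nearest point on the carrying line actually lies on the subsegment $\beta'$; this follows from the nearest--point projection being $1$--Lipschitz together with the $2\delta$--closeness of the endpoints, and once that is in place the Fermi--coordinate computation $\sinh d(\alpha(t),\bar\beta)=\sinh(d_0)\cosh(t-t_{\mathrm{perp}})$ gives the decay you want. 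Your final choice $C=8\delta+N(2\delta,\epsilon,r)+1$ then works, and, as you note, the argument never actually uses convexity of the quadrilateral, so the statement holds in slightly greater generality than claimed.
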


We now bound the diameter of the parallel subtrees.

\begin{lemma} \label{L:bounding parallel subtrees} There is a constant $D_{\parallel}>0$ so that for any deep simplex $u \subset \C^s(S^z)$, the diameter of any parallel subtree of $T_G \cap T_u$ is at most $D_{\parallel}$.
\end{lemma}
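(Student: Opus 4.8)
The plan is to argue by contradiction, showing that a long parallel subtree would force a \emph{simple} closed geodesic on $S$ to run parallel to a fixed \emph{filling} closed geodesic for an arbitrarily long time. As in the proof of Lemma~\ref{L:bounding the hull subtree}, we may assume $T_u$ has infinite diameter (otherwise $T_u$, hence $\ell$, has diameter at most $1$ by Lemma~\ref{L:options_for_T_u}\eqref{I:fixed_sets}); in particular $u$ consists of surviving curves, so by Lemma~\ref{L:options_for_T_u}\eqref{I:T_hull=H-hull} we have $t\in T_u$ iff $\hull_u\cap\widetilde Y_t^\circ\neq\emptyset$. Let $t_0,\dots,t_n$ be the vertices of a parallel subtree $\ell$ of $T_G\cap T_u$ for a deep simplex $u$; the goal is a bound on $n$. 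Let $e_i$ be the edge of $\ell$ from $t_{i-1}$ to $t_i$ and $\widetilde\alpha_i=\widetilde\alpha_{e_i}$ the dual geodesic in $p^{-1}(\alpha)$, and use Lemma~\ref{L:Parallel_type_vertices} to fix geodesics $\delta_G\subseteq\partial\hull_G$ and $\delta_u\subseteq\partial\hull_u$ crossing every $\widetilde\alpha_i$ transversely and never meeting inside any $\widetilde Y_{t_i}$; as noted in the proof of that lemma, $\delta_G$ and $\delta_u$ are then disjoint over the whole range between $\widetilde\alpha_i$ and $\widetilde\alpha_j$ for any $i<j$.

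First I would recognize $\delta_G$ as an axis. Because $u$ is deep, $p_0$ carries $\ell$, and the part of $\delta_G$ accompanying it, into the compact surface $\Sigma_1=\bigcup_{\tau\in\sigma_1^{(0)}}Z_\tau$, so $\delta_G$ passes in order through the polygons $Z_{p_0(t_0)},\dots,Z_{p_0(t_n)}$, running along a hull side of each. Since $\sigma_1$ is a finite graph, the ``local state'' of $\delta_G$ at each step (which polygon, which pair of $\alpha$--sides it enters and leaves through, which hull side it follows) takes only finitely many values, so once $n$ exceeds a constant depending only on $\Sigma_1$, a block of states repeats. The deck transformation of the covering $\widetilde\sigma_1\to\sigma_1$ realizing the repetition is then an element $\gamma\in G_1<G$ with $\gamma(e_i)=e_j$ for some $i<j$ and $\gamma(\delta_G)=\delta_G$; as $\gamma\neq 1$ is a torsion-free hyperbolic element preserving the geodesic $\delta_G$, we get $\delta_G=\mathrm{axis}(\gamma)$. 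Thus $p(\delta_G)$ is the closed geodesic of $S$ freely homotopic to $\gamma\in\pi_1S$; since $G$ is purely pseudo-Anosov, $\gamma$ is pseudo-Anosov in $\Mod(S^z,z)$, so by Theorem~\ref{T:Kra} the geodesic $p(\delta_G)$ is filling. Moreover $p(\delta_G)$ is one of finitely many closed geodesics of $S$ (those freely homotopic to boundary curves of the fixed compact surface $\Sigma_1$), and in particular has length at most a constant $\Lambda=\Lambda(G,\Sigma_1)$. On the other side, $\delta_u$ is a boundary geodesic of $\hull_u$, hence the axis of a primitive element $\beta\in K_u<\pi_1S$, and $p(\delta_u)$ is a simple closed geodesic of $S$, freely homotopic to a component of $\Phi(u)$ (see \S\ref{S:fibers and trees}).

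Next I would run the parallelism against these two facts. Since consecutive $\widetilde\alpha_i$ are separated by a region $\widetilde Y_{t_i}$ and distinct components of $p^{-1}(\alpha)$ are at distance at least $2$ (see \S\ref{S:actions on H and T}), we have $d(\widetilde\alpha_i,\widetilde\alpha_j)\geq 2(j-i)$. Fix $\epsilon=1$, let $r>\Lambda$ be a parameter, and let $C=C(r,1)$ be the constant from Lemma~\ref{L:wide_rectangles_are_narrow}. If $n\geq C/2$, choose $i<j$ with $j-i\geq C/2$ and apply that lemma to the convex quadrilateral cut out by segments of $\widetilde\alpha_i,\delta_G,\widetilde\alpha_j,\delta_u$, with the $\widetilde\alpha$--segments opposite (they do bound a convex quadrilateral, since these four geodesics are pairwise non-crossing except for the four transversal intersections forming the vertices). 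This produces subsegments $s_G\subseteq\delta_G$ and $s_u\subseteq\delta_u$, each of length at least $r$, with $s_G\subseteq N_1(s_u)$. Since $\delta_G=\mathrm{axis}(\gamma)$ has translation length at most $\Lambda$, the segment $s_G$ contains at least $M\geq r/\Lambda-2$ consecutive $\langle\gamma\rangle$--fundamental domains lying in $N_1(\delta_u)$; translating by powers of $\gamma$ and intersecting then forces the geodesics $\mathrm{axis}(\gamma^{-k}\beta\gamma^{k})$, $0\leq k\leq M/2$, to lie within distance $2$ of one another along a common segment of length on the order of $r$. These geodesics are pairwise distinct, since $\gamma$ and $\beta$ cannot share an axis ($p(\delta_G)$ fills while $p(\delta_u)$ is simple), so letting $r\to\infty$ (permissible, as larger $r$ only forces $\ell$ longer) forces arbitrarily many distinct axes of elements of the discrete group $\pi_1S$ to coincide to within distance $2$ along arbitrarily long segments --- impossible. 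Equivalently, the simple geodesic $p(\delta_u)$ cannot run $1$--parallel to the fixed filling geodesic $p(\delta_G)$ for longer than a bounded time. Hence $n$ is bounded by a constant $D_{\parallel}$ depending only on $G$ and $\Sigma_1$.

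The step I expect to be the main obstacle is making this last contradiction genuinely uniform in $u$: the naive divergence estimate for $\mathrm{axis}(\gamma)$ and $\mathrm{axis}(\beta)$ depends on $\beta$, i.e.\ on $u$, and when $p(\delta_u)$ is very long the neighborhoods involved are no longer embedded. The fix should be to carry out the comparison relative to the fixed multicurve $\alpha$ --- exploiting that $\delta_G$ lies in the finite family of boundary axes of $\Sigma_1$ and that $\delta_G$ and $\delta_u$ cross the \emph{same} lifts $\widetilde\alpha_i$ of $\alpha$ --- together with the collar lemma for $p(\delta_u)$, so that the bound on the parallel length comes out independent of $u$. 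The pigeonhole identification of $\gamma$ and the application of Kra's theorem are routine by comparison.
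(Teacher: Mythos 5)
Your broad strategy matches the paper's: produce the boundary geodesics $\delta_G$, $\delta_u$ via Lemma~\ref{L:Parallel_type_vertices}, argue that for a long parallel path $p(\delta_G)$ is forced to be one of finitely many \emph{filling} closed geodesics coming from $\partial\Sigma_1$ (hence Kra applies), note $p(\delta_u)$ is simple, and then use Lemma~\ref{L:wide_rectangles_are_narrow} to force a long near-parallel stretch, yielding a contradiction. You even use a somewhat different route to recognize $\delta_G$ as the axis of an element of $G_1$ (pigeonholing on ``local states'' and extracting a deck transformation, rather than the paper's observation that $p_0(\delta_G')$ is a subset of $\partial\Sigma_0\cap\partial\Sigma_1$ of length exceeding the length $L$ of the longest arc component, hence must lie on a closed boundary component); that part is a bit informally stated but works.

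However, the last step --- the actual contradiction between ``simple'' and ``filling'' --- has a genuine gap, and you identify it yourself but propose the wrong fix. Your attempt to derive a contradiction from discreteness by comparing axes of $\gamma^{-k}\beta\gamma^k$ is not uniform in $u$: you correctly note the divergence estimate depends on $\beta$, but the suggested repair (``collar lemma for $p(\delta_u)$,'' comparison relative to $\alpha$) doesn't resolve this, since the length of $p(\delta_u)$ is unbounded over all $u$ and the collar lemma gives no control in the direction you need. The paper's resolution (Claim~\ref{CL:simple_cannot_follow_non-simple}) is genuinely different and is the key idea you are missing: because the filling geodesics $\gamma_1,\dots,\gamma_k$ form a \emph{finite} list of \emph{non-simple} closed geodesics, one can fix $\epsilon$ small enough that each self-intersection point of each $\gamma_i$ has an embedded $8\epsilon$--ball isometric to a hyperbolic ball, and take $r$ larger than (roughly) twice the length of $\gamma_i$. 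Then any lift $\widetilde\beta$ of a curve $\beta$ that stays $\epsilon$--close to a lift $\widetilde\gamma$ of $\gamma_i$ along a length $\geq r$ must pass near two distinct lifts of the same self-intersection point of $\gamma_i$ corresponding to the two crossing branches, and projecting to $S$ produces two transversally crossing subarcs of $\beta$, so $\beta$ is non-simple. This gives a bound on parallel running that depends only on $G$ and $\Sigma_1$, uniformly in $u$. Without this (or some equivalent) you have not closed the argument; the rest of your proof is sound but rests on it.
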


\begin{proof}
	By the Collar Lemma \cite{Ke}, we  may (and will) assume that the hyperbolic metric on $S$ is chosen so that each component of $\alpha$ is short enough to ensure that the distance between two different geodesics in $p^{-1}(\alpha)$ is at least 1.  Let $t_0,\dots,t_n$ be the vertices of a geodesic edge path in one of the parallel subtrees of $T_G \cap T_u$, then let $\widetilde{\alpha}_i$ be the geodesic of $p^{-1}(\alpha)$ that is dual to the edge from $ t_{i-1}$ to $t_i$. By Lemma \ref{L:Parallel_type_vertices}, there are geodesics $\delta_G \subseteq \partial \hull_G$ and $\delta_u \subseteq \partial \hull_u$ that form a convex quadrilateral  with $\widetilde \alpha_1$ and $\widetilde \alpha_n$.
	We first show that if $n$ is large enough, then $p_0$ maps $\delta_G$ onto a simple closed geodesic $c$ that is contained in $\partial \Sigma_0 \cap \partial \Sigma_1$.
	
	Since $\Sigma_1$ is the union of the polygons $Z_\tau$ for $\tau \in \sigma_1^{(0)}$, every component $c \subset \partial \Sigma_1$ is either a closed curve in $\partial \Sigma_0$ or  $c \cap \partial \Sigma_0$ is a disjoint union of geodesic arcs. Since $\Sigma_1$ is compact, there exists $L >0$ so that every component of $\partial \Sigma_0 \cap \partial \Sigma_1$ has length at most $L$.
	
	Let $\delta_G'$ be the subsegment of $\delta_G$ between $\widetilde \alpha_1$ and $\widetilde \alpha_n$. Now $\delta'_G$ is the concatenation of arcs in $\partial \widetilde{Z}_t \ssm \partial _\alpha \widetilde{Z}_t $ where $t \in \{t_0,\dots,t_{n}\}$. Since the geodesics in $p^{-1}(\alpha)$ are at least $1$ apart, if $n\geq L+2$, then the length of $\delta'_G$  is at least $L+1$. Moreover,  $p_0(t_i) \in \sigma_1$ because each $t_i \in T_G \cap T_u$ and $u$ is a deep simplex. This  means $p_0(\delta'_G \cap \partial \widetilde{Z}_{t_i})$  is contained in $\partial \Sigma_0 \cap \partial \Sigma_1$ for each $i \in \{1,\dots, n-1\}$, and thus $p_0(\delta'_G) \subset \partial \Sigma_0 \cap \partial \Sigma_1$. Since the components of $\partial \Sigma_0 \cap \partial \Sigma_1$ that are not closed curves in $\partial \Sigma_0$  are all arcs of length at most $L$,  $p_0$ maps $\delta'_G$ onto a closed curve $c \subset \partial \Sigma_0$. Because $c$ is a closed geodesic and $\delta'_G$ is a subsegment of $\delta_G$, this means the entire geodesic $\delta_G$ must also map onto $c$. 
	
	Let $c_1,\dots, c_k$ be the closed curves in $\partial \Sigma_0 \cap \partial \Sigma_1$. For each $c_i$, there is a geodesic curve   $\gamma_i\subset S$ so that the element of $G_1=\pi_1 \Sigma_1 \leq \pi_1 S$ that corresponds to $c_i$ is represented in $\pi_1S < \Mod(S^z;z)$ by the point push of $z$ along $\gamma_i^{-1}$. Because $G_1< G$ and $G$ 
	is purely pseudo-Anosov, Theorem~\ref{T:Kra} says each $\gamma_i$ fills $S$, and hence these are not simple. Moreover, $\widetilde p_0^{-1} (c_i) \subseteq p^{-1}(\gamma_i)$ where $\widetilde{p}_0$ is the covering map $\mathbb H^2 \to \widetilde{S}_0$. 
	
	The following claim puts a bound on how long a lift of a simple closed curve can travel close to a lift of one of the $\gamma_i$.
	
	\begin{claim}\label{CL:simple_cannot_follow_non-simple}
		There are $r\geq 0$ and $0 <\epsilon \leq 1$ independent of $u$ so that for any $i \in \{1,\dots,k\}$ and any geodesic $\widetilde \gamma \in p^{-1}(\gamma_i)$ the following holds. Let $\beta \subset S$ be a closed curve and $\widetilde \beta$ be a geodesic in  $p^{-1}(\beta)$. If  $N_\epsilon( \widetilde \beta) \cap \widetilde \gamma$ contains a geodesic of length at least $r$, then $\beta$ is not simple.  
	\end{claim}
	
	\begin{proof}
		Let $\epsilon \leq 1/16$ be small enough so that if $x$ is a self intersection point of one of the $\gamma_i$, the $8\epsilon$-neighborhood of $x$ on $S$ is isometric to the $8\epsilon$-ball in $\mathbb H^2$. Let $r_0$ be the maximum of all the lengths of all the $\gamma_i$, then let $r = 3r_0 +1$. These $\epsilon$ and $r$ depend on the hyperbolic metric on $S$ and the group $G$, but not on the multicurve $u$.
		
		Let $\beta \subset S$ be a closed curve, then let $\widetilde \beta \in p^{-1}(\beta)$  and $\widetilde \gamma \in p^{-1}(\gamma_i)$ be as described in the statement of the claim. Fix a self-intersection point $x$ of  $\gamma_i$.  	Since $N_\epsilon( \widetilde \beta)$ contains a subsegment of $\widetilde \gamma$ of length at least $r$ and $r$ is more than twice as long as the length of $\gamma_i$, there must exist  $\widetilde y_1, \widetilde w_1, \widetilde y_2, \widetilde w_2 \in \widetilde \beta$ so that the geodesic on $S$ that connects $p(\widetilde y_1)$ and $p(\widetilde w_1)$ and the geodesic that connects $p(\widetilde y_2)$ and $p(\widetilde w_2)$ must cross; see Figure \ref{F:non-simple}. Since these geodesics are subsegments of $\beta = p(\widetilde \beta)$, we have that $\beta$ cannot be simple. 
	\end{proof}

	\begin{figure}[htb]
		\begin{tikzpicture}
			\node[anchor=south west,inner sep=0] (image) at (0,0) {\includegraphics[width=10cm]{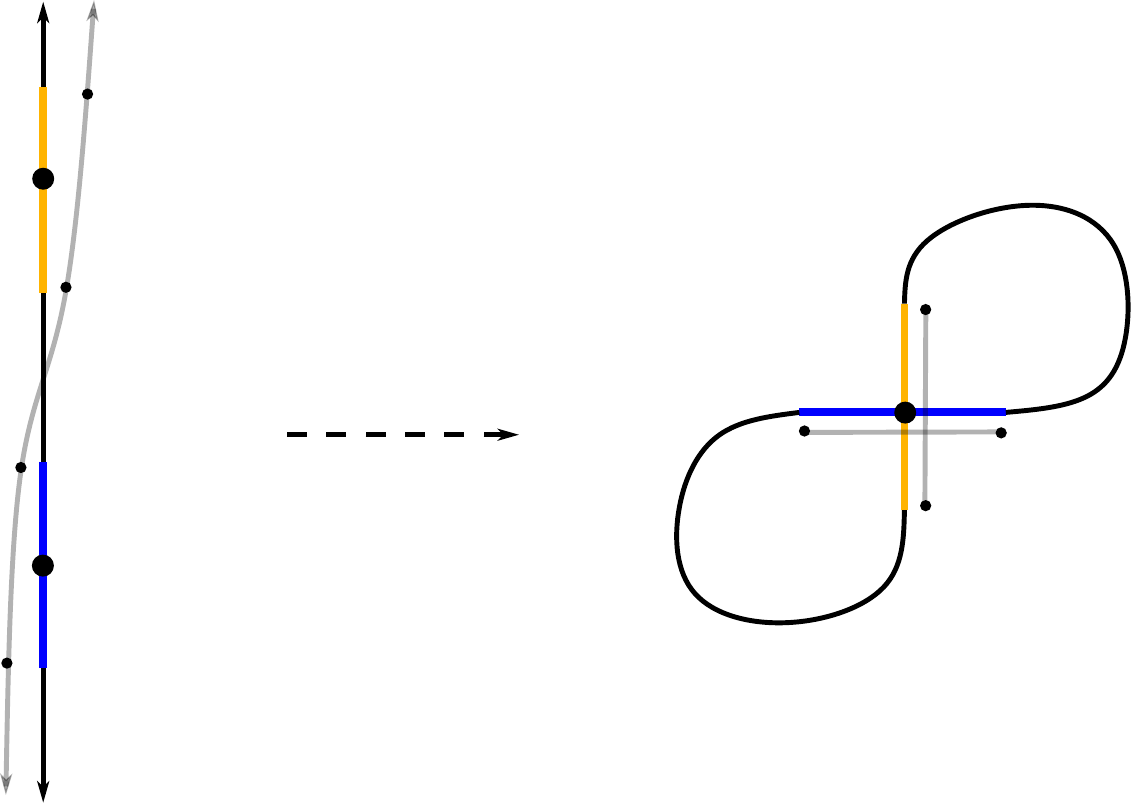}};
			\begin{scope}[x={(image.south east)},y={(image.north west)}]
				\node at (.11,.88) {$\widetilde y_1$};
				\node at (.09,.64) {$\widetilde w_1$};
				\node at (-.02,.42) {$\widetilde y_2$};
				\node at (-.03,.18) {$\widetilde w_2$};	
				\node at (.015,.95) {$\widetilde \gamma$};
				\node at (.11,.96) {$\widetilde \beta$};
				\node at (.35,.49) {$p$};
				\node at (.78,.52) {$x$};
				\node at (.87,.62) {$p(\widetilde y_1)$};
				\node at (.87,.34) {$p(\widetilde w_1)$};
				\node at (.68,.42) {$p(\widetilde y_2)$};
				\node at (.92,.42) {$p(\widetilde w_2)$};
				\node at (.13,.295) {$\in p^{-1}(x)$};
				\node at (-.05,.775) {$ p^{-1}(x) \ni$};		
			\end{scope}
		\end{tikzpicture}
		\caption{A curve $\beta$ cannot be simple if there is a lift $\widetilde \beta$ that runs close to a lift $\widetilde \gamma$ of a non-simple curve $\gamma_i$ for a long enough time.} \label{F:non-simple}
	\end{figure}

	 Let $C$ be the constant from Lemma \ref{L:wide_rectangles_are_narrow} for the $r$ and $\epsilon$ from Claim \ref{CL:simple_cannot_follow_non-simple}. Suppose for the purposes of contradiction that $n \geq \max \{C+2,L+3\}$. Recall, $\delta_u$ and $\delta_G$ form a convex quadrilateral with with $\widetilde \alpha_1$ and $\widetilde \alpha_n$.
	 By choice of the hyperbolic metric on $S$,  $\widetilde \alpha_1$ and $\widetilde \alpha_n$ are at least $n-2 \geq C$ apart. Hence Lemma \ref{L:wide_rectangles_are_narrow} says there is a subsegment of $\delta_u$ that is contained in the $\epsilon$--neighborhood of $\delta_G$. As shown above, $n \geq L+2$ implies $ p_0(\delta_G) = c_i$ for some $i \in \{1,\dots, k\}$. On the other hand,  $p(\delta_u)$ is a simple curve because  $p(\partial \hull_u) \subseteq \Phi(u)$ as described in \S\ref{S:fibers and trees}. However, this contradicts Claim \ref{CL:simple_cannot_follow_non-simple}, so we must have $n < \max\{C+2,L+2\}$. Since $C$ and $L$ do not depend on $u$, setting $D_{\parallel} = \max\{C+2,L+3\}$ completes the proof of Lemma \ref{L:bounding parallel subtrees}.
\end{proof}

Armed with bounds on the diameter of the hull and parallel subtrees, we can now prove Proposition \ref{P:bounded diameter}.

\begin{proof}[Proof of Proposition~\ref{P:bounded diameter} assuming Proposition \ref{P:weak bounded diameter}] Recall, we wish to prove a uniform bound $D$ on the diameter of $T_G \cap T_u$ for every simplex $u \subset \C(S^z)$.  We claim that setting $D = D_\hull + 2D_\parallel + 2$ suffices. 

By Lemma~\ref{L:options_for_T_u} parts \eqref{I:fixed_sets} and \eqref{I:non-surviving}, we can assume $u \subset \C^s(S^z)$ and $T_u$ has infinite diameter, while Lemmas \ref{L:translating suffices} and  \ref{L:translating deep} say  it suffices to bound $\diam(T_G \cap T_u)$  when $u$ is a deep simplex.  Let $t,t' \in T_G \cap T_u$ be any two vertices.  The geodesic, $\ell$, connecting $t$ and $t'$ decomposes into at most five segments, two contained in parallel subtrees, one in the hull subtree, and a pair of edges connecting the segments in parallel subtrees to the segment in the hull subtree.  It follows from Lemmas~\ref{L:bounding the hull subtree} and \ref{L:bounding parallel subtrees} that the length of $\ell$ is at most $D$.  Since $t,t'$ were arbitrary, this completes the proof. 
\end{proof}

\section{Bounding the diameter of $p_0(T_u \cap T_G)$.}\label{S:Bounding_intersection_in_sigma_0}

The goal of this section is to prove Proposition~\ref{P:weak bounded diameter}, which asserts the existence of a uniform bound $D'$ on the diameter of $p_0(T_u \cap T_G)$ in $\sigma_0$.  As shown in the previous section, this will complete the proof of Proposition~\ref{P:bounded diameter} and hence Theorem~\ref{T:reducible}.

Recall from \S\ref{S:set up} that $\alpha = \alpha_1 \cup \ldots \cup \alpha_n$ is the canonical reduction system of the pure, reducible homeomorphism $f$ with complementary subsurfaces $Y_1,\ldots,Y_k$.  
For each edge $e$ of $T$ (or $\sigma_0$), we write $A_e = A_{i(e)}$ for the annular cover of $S$ corresponding to the component $\alpha_e = \alpha_{i(e)} \subset \alpha$; if $e$ is an edge in $T$, then $A_e = \mathbb H^2/K_e$.  If $e \subset T$ and $g \in \Gamma$, then we have a canonical identification $A_e = A_{g(e)}$. Likewise, for each vertex $t$ of $T$ (or $\sigma_0$), we write $Y_t = Y_{j(t)}$ for the corresponding complementary subsurface of $S$, given by $Y_t = \widetilde Y_t/K_t$ for $t \in T$.  If $t \in T$, $g \in \Gamma$, then $Y_t= Y_{g(t)}$.  Observe that $f$ acts on each $\AC(Y_j)$ by restricting $f|_{Y_j}$ and on each $\A(A_i)$ by lifting $f$ to $A_i$. 

We say that a vertex $t \in T$ is a \emph{pseudo-Anosov vertex} (resp \emph{identity vertex}) of $T$ if $f$ acts by a pseudo-Anosov (resp.~by the identity) on $Y_{t}$; that is, if $Y_{t}$ is a pseudo-Anosov (resp.~identity) component of $f$.  Recall, by \cite{MM1} $f$ will act by a pseudo-Anosov on $Y_{t}$ if and only if $f$ acts loxodromically on $\AC(Y_{t})$. We say  $e$ is a \emph{twist edge} of $T$ if  $f$  acts loxodromically on $\A(A_e)$. This occurs if the complementary components $Y_j$,$Y_{j'}$ of $\alpha$ that meet the curve $\alpha_{e}$ are identity components, and hence $f$ acts by a power of a Dehn twist in $\alpha_{e}$, or if at least one of $Y_j$ or $Y_{j'}$ are pseudo-Anosov components which effect (possibly fractional) non-canceling Dehn twists about the boundary component(s) corresponding to $\alpha_{e}$.  Since the assignments of  $Y_t$ and $A_e$ are $G$--equivariant, the labeling of pseudo-Anosov/identity vertex and twist edge are also $G$--equivariant. Hence they descend to give the same labels to vertices and edges of $\sigma_0 = T_G /G_0$. 
The next lemma ensures that every path of length 2 in $T$ contains either a twist edge or a pseudo-Anosov vertex.

\begin{lemma} \label{L:ubiquitous} For any non-twist edge, $e \subset T_G$, at least one endpoint is a pseudo-Anosov vertex.
\end{lemma}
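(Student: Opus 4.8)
The claim is a purely combinatorial-geometric statement about the canonical reduction system $\alpha$ for the pure reducible homeomorphism $f$. The plan is to argue by contradiction: suppose $e \subset T_G$ is a non-twist edge whose two endpoints $t, t'$ are both identity vertices. Let $Y_j = Y_t$ and $Y_{j'} = Y_{t'}$ be the corresponding complementary subsurfaces of $\alpha$ meeting the curve $\alpha_e$, both of which are therefore identity components (possibly $j = j'$ if both sides of $\alpha_e$ lie in the same complementary subsurface). The heart of the matter is to show that this configuration forces $\alpha_e$ to be \emph{removable} from the canonical reduction system, contradicting minimality/canonicity of $\alpha$ (as in \cite{ivanov}).

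The key step is to observe what $f$ does near $\alpha_e$. Since $f$ acts by the identity on both $Y_j$ and $Y_{j'}$ (up to isotopy fixing $\alpha$ setwise), and since $e$ is a \emph{non-twist} edge, $f$ induces no net Dehn twisting about $\alpha_e$ — precisely because a twist edge was defined to be one on which $f$ acts loxodromically on $\A(A_e)$, and the only way $f$ can act nontrivially on $\A(A_e)$ when both adjacent subsurfaces are identity components is by a nonzero power of the Dehn twist $T_{\alpha_e}$. So $f$ is isotopic, near $\alpha_e$, to a homeomorphism that is the identity on a neighborhood $N$ of $Y_j \cup \alpha_e \cup Y_{j'}$ (after the isotopies that trivialize $f$ on $Y_j$, $Y_{j'}$ and kill the twisting). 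Then $\alpha_e$ is isotopic into the interior of the identity locus of $f$, hence is not needed as a reducing curve: one can find an isotopic copy of $f$ for which $\alpha \ssm \alpha_e$ is still a reducing system but $\alpha_e$ has been absorbed into the neighboring identity piece. This contradicts the fact that $\alpha$ is the \emph{canonical} (minimal) reduction system — every curve of $\alpha$ must be ``essential'' to the reduction in the sense of Ivanov.

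The main obstacle — and the step requiring care — is the bookkeeping around the two-sided versus one-sided cases and the interaction of the twisting with the pseudo-Anosov/identity dichotomy on the \emph{other} boundary components of $Y_j$ and $Y_{j'}$. Concretely: $Y_j$ may have several boundary curves lying in $\alpha$, and while $f|_{Y_j}$ is isotopic to the identity as a map of the abstract surface $Y_j$, the canonical reduction system is defined in $S$, so one must verify that the curve $\alpha_e$ can be cancelled without disturbing the reducing property at the \emph{other} curves of $\alpha$ bounding $Y_j$ or $Y_{j'}$. This is handled by noting that the characterization of the canonical reduction system is local to each curve: a curve $\beta \in \alpha$ is in the canonical reduction system iff it is an ``essential reducing curve'' for $f$, and this is detected by subsurface projections / the action on $\A(A_\beta)$ and on $\AC$ of the two adjacent subsurfaces. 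Since removing $\alpha_e$ only merges $Y_j$ and $Y_{j'}$ (or, in the one-sided case, increases the genus/complexity of the single piece) into a new identity component without changing $f$ elsewhere, the remaining curves of $\alpha$ retain exactly their old status. Hence $\alpha \ssm \alpha_e$ is a strictly smaller reduction system realized by an isotopic copy of $f$, contradicting the minimality of the canonical reduction system. This contradiction proves that at least one of $t, t'$ must be a pseudo-Anosov vertex.

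I should also record the immediate consequence used in the paper (``every path of length $2$ in $T$ contains a twist edge or a pseudo-Anosov vertex''): if $e, e'$ are consecutive edges of $T_G$ sharing the vertex $t$, and neither $e$ nor $e'$ is a twist edge, then applying the Lemma to $e$ forces an endpoint of $e$ to be pseudo-Anosov, and applying it to $e'$ forces an endpoint of $e'$ to be pseudo-Anosov; in either case some vertex of the path $\{$other endpoint of $e\}, t, \{$other endpoint of $e'\}$ is a pseudo-Anosov vertex, which is what is needed downstream in \S\ref{S:Bounding_intersection_in_sigma_0}.
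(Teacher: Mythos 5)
Your proof is correct and takes essentially the same route as the paper's two-sentence argument: both hinge on the characterization of the canonical reduction system, namely that if neither side of $\alpha_e$ sees a pseudo-Anosov piece and $f$ effects no twisting about $\alpha_e$, then $\alpha_e$ is superfluous as a reducing curve, contradicting canonicity. Your write-up is considerably longer and fills in the isotopy/locality bookkeeping, but no new idea is introduced beyond what the paper's one-line proof already invokes.
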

\begin{proof} If neither endpoint of $e$ is a pseudo-Anosov vertex, then $f$ must act as a Dehn twist about $\alpha_e$, since otherwise $\alpha_e$ would not be in the canonical reduction system for $f$. Thus, $e$ is a twist edge.
\end{proof}

\subsection{Edge and vertex decorations} \label{S:projections}

To each edge $e$ and vertex $t$ of $T_G$ we will assign a bounded diameter subset $\Delta_e$ and $\Delta_t$ of the arc and curve graph of $A_e$ and $Y_t$, respectively.  We call these {\em decorations} of the edges and vertices.

For each edge $e$ of $T_G$, there are exactly two geodesics in $\partial \hull_G$ that non-trivially intersect $\widetilde \alpha_e$.  Define $\Delta_e \subset \A(A_e)$ to be the union of the images of these two geodesics under the covering map $\mathbb H^2 \to A_e$. If $e$ and $e'$ are edges of $T_G$ that are in the same $G_0$--orbit, then $\Delta_e = \Delta_{e'}$ because   $G_0$ preserves $\hull_G$ and $A_e = A_{e'}$ .

For each vertex $t$ in $T_G$, each geodesic arc $\widetilde \gamma$ in $\widetilde Z_t \subset \widetilde Y_t$ with endpoints in $\partial_\alpha \widetilde Z_t =  \widetilde Z_t \cap p^{-1}(\alpha)$ projects to a geodesic path $\gamma$ in $Y_t$; see Figure~\ref{F:Y and Z}. For each such path $\gamma$, we consider the self-intersection number ${\mathbb I}(\gamma)$, which is the minimum number of double points of self intersection over all representatives of the homotopy class rel endpoints (which is realized by the unique geodesic representative orthogonal to the boundary).  For each $t$, there are only finitely many homotopy classes of such arcs, $\gamma_1,\ldots,\gamma_{r(t)}$, and we set
\[ \Delta_t = \{\beta \in \AC(Y_t) \mid i(\beta,\gamma_j) \leq 2 \mathbb I(\gamma_j) \mbox{ for some } j \in \{1,\ldots,r(t)\}\}.\]
Note that by taking a representative of $\gamma_j$ with only double points of self intersection realizing $\mathbb I(\gamma_j)$, we can construct an arc $\beta_j$ in $Y_t$ from surgery on these self intersection points, and then pushing off, so that $i(\beta_j,\gamma_j) \leq 2 \mathbb I (\gamma_j)$. In particular, $\Delta_t \neq \emptyset$.  Moreover, any $\beta$ with $i(\beta,\gamma_j) \leq 2 \mathbb I(\gamma_j)$ also has $i(\beta,\beta_j) \leq 2 \mathbb I(\gamma_j)$ since $\beta_j$ is constructed from arcs of $\gamma_j$.  Since distance is bounded by a  function of intersection number (see e.g.~\cite{MM1}), it follows that $\Delta_t$ has finite diameter in $\AC(Y_t)$.
As with the edge decorations, if $t$ and $t'$ are vertices in the same $G_0$--orbit, then $\Delta_t = \Delta_{t'}$.

\begin{center}
	\begin{figure}[htb]
		\begin{tikzpicture}
			\node at (0,0) {\includegraphics[width=12cm]{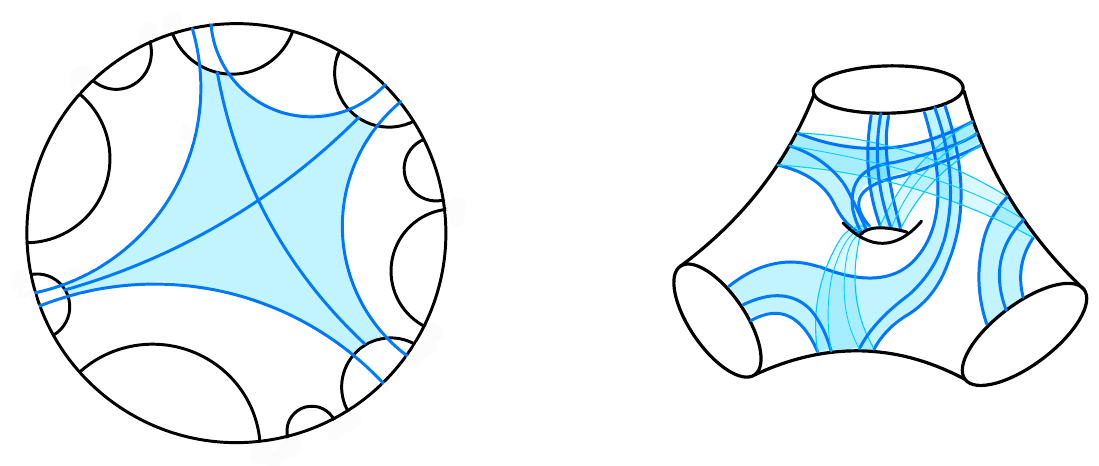}};
			\node at (-3.7,.5) {$\widetilde Z_t$};
			\node at (-4.3,1.2) {$\widetilde Y_t$};
			\node at (2.35,-.05) {$Y_t$};
			\draw[-\tips] (-2.5,.5) .. controls (-1,1) and (1,1) .. (2.3,.75);
		\end{tikzpicture}
		\caption{Left: The polygon $\widetilde Z_t \subset \hull_G$ (shaded) contained in $\widetilde Y_t$ and essential geodesics segments contained in it.  Right: The image of $\widetilde Z_t$ and its arcs in $Y_t = \widetilde Y_t/K_t$.} \label{F:Y and Z}
	\end{figure}
\end{center}

The next lemma describes how these decorations behave under arbitrary elements of $G$.   Recall that $\phi \colon G \to \mathbb Z$ is the homomorphism so that $ f^{\phi(g)} = \Phi_*(g)$ for any  $g \in G$.

\begin{lemma} \label{L:equivariant decoration} For any edge $e$ or vertex $t$ of $T_G$ and $g \in G$, we have 
	\[ \Delta_{g(e)} =f^{\phi(g)} (\Delta_e) \quad \mbox{ and } \quad  \Delta_{g(t)} =  f^{\phi(g)}(\Delta_t).\]
\end{lemma}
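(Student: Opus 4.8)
The plan is to unwind the definitions of the decorations $\Delta_e$ and $\Delta_t$ and check equivariance directly, keeping careful track of the fact that $G$ does not act on $\mathbb{H}^2$ by isometries but that its action is compatible with that of $\pi_1 S$ and with a lift $\widetilde f$ of $f$, as set up in \S\ref{S:actions on H and T}. Fix $g \in G$ and write $k = \phi(g)$, so that $\Phi_*(g) = f^k$ and $g$ acts on $\mathbb H^2$ as a composition $\widetilde f^{\,k} \circ h$ (equivalently $h' \circ \widetilde f^{\,k}$) for some $h, h' \in \pi_1 S$; in particular $g$ preserves $p^{-1}(\alpha)$, permutes its components, and carries $\hull_G$ to itself since $\hull_G$ is the convex hull of the limit set of $G$ (which is $G$-invariant as a subset of $\partial \mathbb H^2$). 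The first step is to record that $g(\widetilde Y_t) = \widetilde Y_{g(t)}$ and $g(\widetilde \alpha_e) = \widetilde \alpha_{g(e)}$, so that $g$ maps the polygon $\widetilde Z_t = \widetilde Y_t \cap \hull_G$ onto $\widetilde Z_{g(t)}$, and likewise the two geodesics of $\partial \hull_G$ crossing $\widetilde\alpha_e$ onto the two geodesics of $\partial\hull_G$ crossing $\widetilde \alpha_{g(e)}$.

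Next I would treat the edge case. The covering $\mathbb H^2 \to A_e = \mathbb H^2/K_e$ and $\mathbb H^2 \to A_{g(e)} = \mathbb H^2/K_{g(e)}$, together with the identification $A_e = A_{g(e)}$ coming from $K_{g(e)} = gK_e g^{-1}$ and the fact that these are equivalent annular covers of $S$ with core curve $\alpha_e = \alpha_{g(e)}$, fit into a commutative diagram in which $g \colon \mathbb H^2 \to \mathbb H^2$ descends to the self-map of $A_e$ induced by the lift $\widetilde f^{\,k}$ of $f^k$ — this is exactly the action of $f^k$ on $\A(A_e)$ by "lifting $f$ to $A_e$" described in the opening of \S\ref{S:Bounding_intersection_in_sigma_0} (the $\pi_1 S$-part $h$ acts trivially on $A_e$ since it is absorbed into $K_e$ up to the relevant identification). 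Since $\Delta_e$ is by definition the image in $A_e$ of the two $\partial\hull_G$-geodesics crossing $\widetilde\alpha_e$, and $g$ carries these to the two $\partial\hull_G$-geodesics crossing $\widetilde\alpha_{g(e)}$ whose image in $A_{g(e)} = A_e$ is $\Delta_{g(e)}$, chasing the diagram gives $\Delta_{g(e)} = f^{\phi(g)}(\Delta_e)$.

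For the vertex case the argument is parallel but one must also check that the numerical condition defining $\Delta_t$ is preserved. Under the identification $Y_t = Y_{g(t)}$ (via $K_{g(t)} = gK_tg^{-1}$ and equivalence of covers), $g \colon \mathbb H^2 \to \mathbb H^2$ descends to the self-homeomorphism $f^k|_{Y_t}$ of $Y_t$ acting on $\AC(Y_t)$; this sends the geodesic arcs $\gamma_1,\dots,\gamma_{r(t)}$ arising from $\widetilde Z_t$ to the arcs $\gamma_1',\dots,\gamma_{r(g(t))}'$ arising from $\widetilde Z_{g(t)} = g(\widetilde Z_t)$, bijectively. Because $f^k|_{Y_t}$ is a homeomorphism it preserves both self-intersection number $\mathbb I(\gamma_j) = \mathbb I(f^k\gamma_j)$ and geometric intersection number $i(\beta,\gamma_j) = i(f^k\beta, f^k\gamma_j)$, so $\beta \in \Delta_t$ iff $f^k(\beta) \in \Delta_{g(t)}$, i.e. $\Delta_{g(t)} = f^{\phi(g)}(\Delta_t)$. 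The only real subtlety — and the step I expect to require the most care — is verifying precisely that the action of $g$ on $\mathbb H^2$ descends through the annular (resp. subsurface) covering to the stated lift of $f^{\phi(g)}$: one must be careful that the "$\pi_1 S$ factor" $h$ in the decomposition $g = \widetilde f^{\,\phi(g)} h$ genuinely disappears after passing to the quotient by $K_e$ (resp. $K_t$) under the chosen covering-space identification $A_e = A_{g(e)}$ (resp. $Y_t = Y_{g(t)}$), which is where the $G$-equivariance of the assignments $e \mapsto A_e$, $t \mapsto Y_t$ established in \S\ref{S:actions on H and T} does the work. Everything else is bookkeeping with the definitions and the invariance of intersection numbers under homeomorphisms.
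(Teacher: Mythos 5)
Your overall strategy — descend the action of $g$ through the coverings $\mathbb H^2 \to A_e$ and $\widetilde Y_t \to Y_t$ and identify it with a lift of $f^{\phi(g)}$, then chase the definitions of the decorations — is the same as the paper's, and the conclusion you reach is correct. But there is a genuine error in your first step, and it is exactly the subtlety the paper is careful about.

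You assert that ``$g$ carries $\hull_G$ to itself since $\hull_G$ is the convex hull of the limit set of $G$,'' and you then use this to say that $g$ maps $\widetilde Z_t$ onto $\widetilde Z_{g(t)}$ and maps the two geodesics of $\partial\hull_G$ crossing $\widetilde\alpha_e$ to the two geodesics of $\partial\hull_G$ crossing $\widetilde\alpha_{g(e)}$. This is false, and the paper says so explicitly in \S\ref{S:hulls and trees}: because $G \neq G_0$, the action of $G$ on $\mathbb H^2$ is not by isometries, so while the limit set in $\partial\mathbb H^2$ is $G$-invariant, the \emph{convex hull} $\hull_G \subset \mathbb H^2$ is generally not. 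Only $G_0$ preserves $\hull_G$. What is true — and what the paper's proof records — is that $g$ maps each geodesic of $\partial\hull_G$ to a bi-infinite \emph{path} (a quasi-geodesic) that is homotopic rel its ideal endpoints to a geodesic of $\partial\hull_G$, because those geodesics are determined by which components of $p^{-1}(\alpha)$ they cross, and $g$ does permute $p^{-1}(\alpha)$ correctly. Likewise $g(\widetilde Z_t)$ is not $\widetilde Z_{g(t)}$, but the image arcs are homotopic, rel endpoints on $p^{-1}(\alpha)$, to the arcs of $\widetilde Z_{g(t)}$. Your argument can be repaired with one sentence — replace ``maps onto'' with ``maps to a path homotopic rel ideal endpoints (resp.\ rel endpoints on $\partial_\alpha\widetilde Z_{g(t)}$) to'' — and then everything downstream goes through unchanged, since $\Delta_e \subset \A(A_e)$ and $\Delta_t \subset \AC(Y_t)$ are defined at the level of homotopy/isotopy classes, and the intersection and self-intersection counts you invoke are homotopy invariants. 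You correctly anticipated that the delicate point would involve how $g$ descends, but you located the wrong subtlety: the descent of $g$ to a lift of $f^{\phi(g)}$ via the identifications $A_e = A_{g(e)}$ and $Y_t = Y_{g(t)}$ is exactly as you describe; the thing to be careful about is that $g$ is not an isometry of $\mathbb H^2$, so it does not preserve geodesics or convex hulls on the nose.
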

\begin{proof}
	Observe that each $g \in G$ maps each geodesic of $\partial \hull_G$ to a bi-infinite path that is homotopic, rel the ideal endpoints, to a geodesic in $\partial \hull_G$ (since these are completely determined by the components of $p^{-1}(\alpha)$ that are intersected).  Since $g$ descends to the lift of $ f^{\phi(g)}$ on each $A_e = A_{g(e)}$, the first equation follows.
	
	For the second equation, let $\widetilde \gamma \subset \widetilde Z_t$ be any geodesic arc with endpoints in $\partial_\alpha \widetilde Z_t$ and $\gamma$ the image path in $Y_t$.  
	Next, observe that $g$ descends to the restriction of $ f^{\phi(g)}$ to $Y_t = Y_{g(t)}$, and so maps $\gamma$ to a path $ f^{\phi(g)}(\gamma)$, which is homotopic to the image of a geodesic in $\widetilde Z_{g(t)}$.  Therefore, the restriction of $ f^{\phi(g)}$ to $Y_t$ maps the finite set of homotopy classes of paths defining $\Delta_t$ to those defining $\Delta_{g(t)}$, and hence sends $\Delta_t$ to $\Delta_{g(t)}$.
\end{proof}

As a consequence, we have

\begin{corollary} \label{C:bounded projection}
	There exists a constant $B_0 >0$ so that
	\[ \diam(\Delta_e),\diam(\Delta_t) \leq B_0 \]
	for all vertices $t$ and edges $e$ of $T_G$.
\end{corollary}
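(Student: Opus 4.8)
The plan is to deduce the uniform bound from the $G$--equivariance established in Lemma~\ref{L:equivariant decoration} together with the cocompactness of the $G$--action on $T_G$ from Lemma~\ref{L:free action of G}. First I would observe that, by construction in \S\ref{S:projections}, each individual decoration $\Delta_e \subset \A(A_e)$ and $\Delta_t \subset \AC(Y_t)$ has \emph{finite} diameter: for edges this is because $\Delta_e$ is the image of exactly two geodesics of $\partial\hull_G$, and for vertices it was shown directly that $\Delta_t$ has finite diameter in $\AC(Y_t)$ since $Y_t$ admits only finitely many homotopy classes of arcs $\gamma_1,\dots,\gamma_{r(t)}$ crossing it and distance in $\AC(Y_t)$ is bounded by a function of intersection number. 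So the content of the corollary is purely the \emph{uniformity} of these bounds over the infinitely many edges and vertices of $T_G$.

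Next I would invoke Lemma~\ref{L:free action of G}, which gives a compact fundamental domain for the $G$--action on $T_G$; in particular there are only finitely many $G$--orbits of vertices and of edges in $T_G$. Pick orbit representatives $e_1,\dots,e_s$ of edges and $t_1,\dots,t_p$ of vertices, and set
\[
B_0 = \max\bigl\{\, \diam(\Delta_{e_1}),\dots,\diam(\Delta_{e_s}),\ \diam(\Delta_{t_1}),\dots,\diam(\Delta_{t_p})\,\bigr\},
\]
which is finite by the previous paragraph. For an arbitrary edge $e \subset T_G$ there is $g \in G$ and an index $i$ with $e = g(e_i)$, and Lemma~\ref{L:equivariant decoration} gives $\Delta_e = f^{\phi(g)}(\Delta_{e_i})$. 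Since $f$ acts on $\A(A_{e_i}) = \A(A_e)$ by a simplicial automorphism (it is the lift of $f$ to the annular cover, as noted in \S\ref{S:Bounding_intersection_in_sigma_0}), so does the power $f^{\phi(g)}$, and simplicial automorphisms are isometries of the graph metric. Hence $\diam(\Delta_e) = \diam(\Delta_{e_i}) \le B_0$. The identical argument, using that $f^{\phi(g)}$ restricts to a homeomorphism of $Y_{t_i} = Y_t$ and thus acts simplicially (hence isometrically) on $\AC(Y_{t_i}) = \AC(Y_t)$, yields $\diam(\Delta_t) = \diam(\Delta_{t_i}) \le B_0$ for every vertex $t \subset T_G$. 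This is exactly the claim.

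I do not anticipate a serious obstacle here; the only point requiring a little care is making sure the metrics on $\A(A_e)$ and $\AC(Y_t)$ are canonically identified across an orbit so that ``$f^{\phi(g)}$ is an isometry'' is literally true. This is precisely the canonical identifications $A_e = A_{g(e)}$ and $Y_t = Y_{g(t)}$ fixed in \S\ref{S:actions on H and T}, under which $g$ acts as (a representative of) $f^{\phi(g)}$; combined with the fact that any mapping class acts by simplicial automorphisms on the arc graph or the arc-and-curve graph of a surface, the isometry statement is immediate. Thus the corollary follows formally from Lemma~\ref{L:free action of G} and Lemma~\ref{L:equivariant decoration}.
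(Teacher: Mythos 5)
Your proof is correct and follows the same route as the paper: Lemma~\ref{L:equivariant decoration} plus the fact that $f^{\phi(g)}$ acts as a simplicial (hence isometric) automorphism of $\A(A_e)$ and $\AC(Y_t)$ shows that $\diam(\Delta_e)$ and $\diam(\Delta_t)$ are constant along $G$--orbits, and finiteness of the orbit set from Lemma~\ref{L:free action of G} then gives a uniform bound. Your added remark about the canonical identifications $A_e = A_{g(e)}$ and $Y_t = Y_{g(t)}$ is a helpful clarification of a step the paper leaves implicit.
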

\begin{proof}
	There are only finitely many $G$--orbits of edges and vertices in $T_G$ and for any $g \in G$, $ f^{\phi(g)}$ acts by simplicial automorphisms on $\A(A_e)$ and $\AC(Y_t)$ for every edge $e$ and vertex $t$.  By Lemma~\ref{L:equivariant decoration}, it follows that
	\[ \diam(\Delta_{g(e)}) = \diam(f^{\phi(g)}( \Delta_e)) = \diam(\Delta_e) \quad \mbox{ and } \quad \diam(\Delta_{g(t)}) = \diam( f^{\phi(g)}(\Delta_t)) = \diam(\Delta_t).\]
Therefore, we can take $B_0$ to be the maximum diameter of $\Delta_e$ and $\Delta_t$ taken over a finite set of $G$--orbit representatives of edges $e$ and vertices $t$.
\end{proof}

Since $\Delta_e = \Delta_{e'}$ and $\Delta_t = \Delta_{t'}$ for $e,e'$ or $t,t'$ in the same $G_0$--orbit, these 
decorations on edges and vertices descend to decorations on the edges and vertices of $\sigma_0 =T_G/G_0$. We denote these by $\Delta_\ee$ and $\Delta_\tau$ for an edge $\ee$ or vertex $\tau$ of $\sigma_0$. Since $\frak f$ is isotopic to a lift of $f$ to $S_0$ preserving $\Sigma_0$ and its spine $\sigma_0$, the action of $G/G_0 \cong \langle \frak f \rangle$ on $\sigma_0 = T_G/G_0$ and $\Sigma_0 = \hull_G/G_0$ also induces an action on the decorations, satisfying the analogous formula to Lemma \ref{L:equivariant decoration}:
\begin{equation} \Delta_{\frak f^n(\ee)} = f^n(\Delta_\ee) \quad \mbox{ and } \quad \Delta_{\frak f^n(\tau)} = f^n(\Delta_\tau)
\end{equation} 
for every edge $\ee$ and vertex $\tau$ of $\sigma_0$ and every $n \in \mathbb Z$.

\subsection{Projections} \label{S:MM projections}
Given a multicurve $v \subset \C(S)$, Masur and Minsky defined a  projection of $v$ to the arc and curve graph of subsurfaces and annular covers of $S$ \cite{MM2}. We will describe these projection in the special cases of $A_e$ and $Y_t$.

For each vertex $t \in T_G$, the multicurve $v$ intersects $Y_t$ in a collection of disjoint curves and arcs, producing a (possibly empty) simplex of $\AC(Y_t)$. Let $\pi_t(v) \subset \AC(Y_t)$ be this simplex.  We observe that $\pi_t(v)$ is precisely the set of essential arcs and curves that are in the image of $p^{-1}(v) \cap \widetilde{Y}_t$ under the covering map $\widetilde Y_t \to Y_t$ (compare with the definition of $\Delta_t$).  Since $Y_t = Y_{t'}$ if $t$ and $t'$ are in the same $G$--orbit, we have $\pi_t(v) = \pi_{t'}(v)$ in this case.

For an edge $e \subset T_G$, we define $\pi_e(v) \subset \A(A_e)$ to be the set of essential arcs in the preimage of $v$ under the covering map $A_e \to S$. As in the case of $\pi_t$, we note that $\pi_e(v)$ is precisely the essential arcs in the image of $p^{-1}(v)$ under the covering map $\mathbb H^2 \to A_e$ (compare with the definition of $\Delta_e$). Since $v$ is a collection of disjoint curves, $\pi_e(v)$ is a simplex of $\A(A_e)$. Recall, the core curve of $A_e$ is (a lift of) one of the curves $\alpha_e$ in $\alpha$. Thus, we have $\pi_e(v) \neq \emptyset$ if and only if $i(v,\alpha_e) \neq 0$.   Since $A_e = A_{e'}$ when $e$ and $e'$ are in the same $G$--orbit, we have $\pi_e(v) = \pi_{e'}(v)$ for such pairs of edges.

Since $A_e$ and $Y_t$ are determined by the $G$--orbit of the edge or vertex, we can define projection for vertices and edges of $\sigma_0$ by \[ \pi_\ee(v) = \pi_e(v) \text { and } \pi_\tau(v) = \pi_t (v)\] where $\ee = p_0(e)$ and $\tau = p_0(t)$.

Given an edge $e$ or vertex $t$ of $T_G$ (or $\sigma_0$), we let $d(\Delta_e, \pi_e(v))$ and $d(\Delta_t,\pi_t (v))$ denote the diameter of  $\Delta_e \cup \pi_e(v)$ and $\Delta_t \cup\pi_t (v)$ in $\A(A_e)$ and $\AC(Y_t)$, respectively. 
Our proof of Proposition \ref{P:weak bounded diameter} hinges upon understanding for how many vertices/edges in a row these diameters can be large along a path in $T_G$.

  For an edge $\ee \subset \sigma_0$ or vertex $\tau \in \sigma_0$ and $B >0$, define:
\[ \mathcal E(v,B) = \{ \ee \subset \sigma_0 \mid \ee \mbox{ is a twist edge of } \sigma_0, \pi_\ee (v) \neq \emptyset, \mbox{ and } d(\Delta_\ee,\pi_\ee (v)) \leq B \} \]
\[ \mathcal V(v,B)= \{ \tau \in \sigma_0 \mid \tau \mbox{ is a pA vertex of } \sigma_0, \pi_\tau(v) \neq \emptyset, \mbox{ and } d(\Delta_\tau,\pi_\tau(v)) \leq B \}\]
We view these both as sets of edges and vertices, respectively, and as subgraphs of $\sigma_0$ (defined by taking the union of the corresponding set of edges/vertices).
Let $V,E >0$ be the numbers of $G$--orbits of vertices and edges in $T_G$, respectively.  Equivalently, $V,E$ are the numbers of $\langle \frak f \rangle$--orbits of vertices and edges in $\sigma_0$, respectively.
\begin{lemma} \label{L:bounded clusters} For any $B > 0$ there exists $M > 0$ so that the following holds for each multicurve $v \subset \C(S)$: 
	\begin{enumerate}
		\item $\mathcal E(v,B)$ is a union of at most $E$ sets of diameter at most $M$. \label{I:E_bound}
		\item $\mathcal V(v,B)$ is a union of at most $V$ sets of diameter at most $M$. \label{I:V_bound}
	\end{enumerate}
\end{lemma}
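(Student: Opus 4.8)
I would prove both parts simultaneously using the following strategy: show that each $G$--orbit of edges/vertices in $T_G$ contributes at most one set of bounded diameter to $\mathcal{E}(v,B)$ (resp. $\mathcal{V}(v,B)$), and that the diameter bound $M$ comes from the equivariance formula $\Delta_{\frak f^n(\ee)} = f^n(\Delta_\ee)$ together with the fact that $f$ acts loxodromically on $\A(A_\ee)$ for twist edges (resp. on $\AC(Y_\tau)$ for pA vertices). The key mechanism is: if $\ee$ and $\ee' = \frak f^n(\ee)$ are both in $\mathcal{E}(v,B)$ with $|n|$ large, then $d(\Delta_\ee, \pi_\ee(v)) \leq B$ and $d(\Delta_{\ee'}, \pi_{\ee'}(v)) \leq B$ force $\pi_\ee(v)$ and $\pi_{\ee'}(v) = \pi_\ee(v)$ (since $\ee, \ee'$ share the same annulus $A_\ee = A_{\ee'}$, so the projection of $v$ is literally the same set) to be close to both $\Delta_\ee$ and $f^n(\Delta_\ee)$; but $d_{\A(A_\ee)}(\Delta_\ee, f^n(\Delta_\ee))$ grows linearly in $|n|$ by loxodromicity. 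This bounds $|n|$, hence bounds how far apart two elements of the same $\langle \frak f\rangle$--orbit can be in $\mathcal{E}(v,B)$.

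First I would fix $B > 0$ and let $B_0$ be the constant from Corollary~\ref{C:bounded projection}. For the twist-edge case: suppose $\ee, \ee'$ are two twist edges in $\mathcal{E}(v,B)$ lying in the same $\langle \frak f \rangle$--orbit, say $\ee' = \frak f^n(\ee)$. Since $A_\ee = A_{\ee'}$ and $\pi$ only depends on this annular cover, $\pi_{\ee'}(v) = \pi_\ee(v) =: \rho$, a nonempty simplex of $\A(A_\ee)$. From the defining inequalities and Corollary~\ref{C:bounded projection}, $\rho$ lies within distance $B + B_0$ of both $\Delta_\ee$ and $\Delta_{\ee'} = f^n(\Delta_\ee)$ in $\A(A_\ee)$, so $d_{\A(A_\ee)}(\Delta_\ee, f^n(\Delta_\ee)) \leq 2B + 2B_0$. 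Since $\ee$ is a twist edge, $f$ acts loxodromically on $\A(A_\ee)$, and there are only finitely many $G$--orbits of edges, so there is a uniform lower bound $c > 0$ on the translation length of $f$ on all such $\A(A_\ee)$; hence $c|n| - O(B_0) \leq 2B + 2B_0$, giving $|n| \leq N$ for some $N = N(B)$. The analogous argument for pA vertices uses that $f$ acts loxodromically on $\AC(Y_\tau)$ (by \cite{MM1}, as noted in the text) and that $\pi_\tau(v)$ depends only on the $G$--orbit of $\tau$.

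The final step is to convert these orbit-wise bounds into a diameter bound in $\sigma_0$. Since $\sigma_0 = T_G/G_0$ has uniformly finite valence (Lemma~\ref{L:free action of G}) and $\frak f$ acts cocompactly on $\sigma_0$ with finitely many orbits, any set of edges all lying in a single $\langle \frak f \rangle$--orbit and pairwise within $|n| \leq N$ of each other (in the exponent sense) has diameter at most $M$ in $\sigma_0$, where $M$ depends on $N$, the diameter of a fundamental domain for $\langle \frak f \rangle$ on $\sigma_0$, and the translation length of $\frak f$ on the spine. Partitioning $\mathcal{E}(v,B)$ by the $E$ orbits and $\mathcal{V}(v,B)$ by the $V$ orbits yields the decompositions claimed. \textbf{The main obstacle} I anticipate is making precise the passage from "same $\langle\frak f\rangle$--orbit, bounded exponent difference" to "bounded diameter in $\sigma_0$": one must be careful that $\frak f$ acts on $\sigma_0$ by a graph automorphism whose powers translate points a distance comparable to the exponent (this uses that $\frak f$ has an "axis-like" behavior coming from the loxodromic $G/G_0 \cong \mathbb{Z}$ action on the spine, cf. \S\ref{S:polygons and G quotient}), and conversely that two vertices far apart in $\sigma_0$ but in the same orbit genuinely differ by a large power of $\frak f$ — which is where finite valence of $\sigma_0$ is essential. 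A secondary technical point is verifying the uniform loxodromic translation-length lower bound across the finitely many orbit representatives, but this is immediate from finiteness once loxodromicity is known for each.
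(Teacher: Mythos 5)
Your proposal is correct and follows essentially the same approach as the paper: partition $\mathcal E(v,B)$ (resp.\ $\mathcal V(v,B)$) by $\langle \frak f\rangle$--orbits, use the equivariance $\Delta_{\frak f^n(\ee)} = f^n(\Delta_\ee)$ together with the fact that $\pi_\ee(v)$ is constant along each orbit, and invoke the loxodromic action of $f$ on $\A(A_\ee)$ (resp.\ $\AC(Y_\tau)$) to confine the admissible exponents $n$ to a bounded interval. The converse worry you raise in your final paragraph is unnecessary --- you only need the forward implication that a bounded exponent gives bounded $\sigma_0$--distance, and that is just the triangle inequality $d_{\sigma_0}(\ee,\frak f^n(\ee)) \le |n|\, d_{\sigma_0}(\ee,\frak f(\ee))$, which is exactly how the paper converts the exponent bound into the constant $M$.
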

\begin{proof} 
	Fix $B \geq 0$ and a multicurve $v \subset \C(S)$.   For part \eqref{I:E_bound}, it suffices to fix a twist edge $\ee \subset \sigma_0$ and bound the diameter of the subset of $\mathcal E(v,B)$ consisting of edges in the set $\langle \frak f \rangle \cdot \ee$ by a constant $M_\ee$, independent of $v$.  
	
	For this, suppose $\pi_\ee (v) \neq \emptyset$. Since the monodromy $f$ fixes the annulus $A_\ee$, we have $\pi_{\frak{f}^n(\ee)}(v) = \pi_\ee (v)$ for all $n \in \mathbb Z$. Moreover, Lemma \ref{L:equivariant decoration} says $\Delta_{\frak{f}^n (\ee)} = f^n(\Delta_\ee)$.  Therefore $$d(\Delta_{\frak{f}^n(\ee)},\pi_{\frak{f}^n(\ee)}(v)) =d(f^n(\Delta_{\ee}), \pi_\ee (v))$$ for each $n \in \mathbb{Z}$. 
	
	Since $f$ acts loxodromically on $\A(A_\ee)$, the set of integers $n$ for which $f^n(\Delta_\ee)$ can intersect the $B$--neighborhood of $\pi_\ee (v)$ is contained in an interval of integers $I_\ee \subset \mathbb Z$ whose width, $W_\ee$, depends only on $B$ and the loxodromic constants of the action of $f$ on $\A(A_\ee)$ (and in particular, it is independent of $v$).  Thus we have
	\[ \langle \frak f \rangle \cdot \ee \cap \mathcal E(v,B) \subseteq \bigcup_{n \in I_\ee } \frak f^n(\ee). \]
	The union on the right has diameter at most $W_\ee$ times the distance in $\sigma_0$ between $\ee$ and $\frak f(\ee)$ (or equivalently, the distance between $\frak f^n(\ee)$ and $\frak f^{n+1}(\ee)$, for any $n \in \mathbb Z$).  This bound thus also bounds the diameter of $\langle \frak f \rangle \cdot \ee \cap \mathcal E(v,B)$, and taking any $M$ which is at least the maximum such bound over all $E$ orbit representatives of edges, implies part \eqref{I:E_bound}.
	
	The proof for part \eqref{I:V_bound} is nearly identical, choosing a pseudo-Anosov vertex and using $Y_\tau$ instead of $A_\ee$ and the fact that $f$ acts loxodromically on $\AC(Y_\tau)$.
\end{proof}

The following is an immediate corollary of Lemma~\ref{L:bounded clusters} plus the bound on the valence of $\sigma_0$ (or more directly from the proof).
\begin{corollary}\label{C:finitely_many_small_labels}
	For each $B \geq 0$ there exists $N \geq 0$ so that for each multicurve $v \subset \C(S)$, we have
	\[ |\mathcal E(v,B) | \leq N \quad \mbox{ and } \quad |\mathcal V(v,B)| \leq N. \]
\end{corollary}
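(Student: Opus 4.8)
The plan is to read off the count directly from the internal structure of the proof of Lemma~\ref{L:bounded clusters}, where a finite list of candidate edges/vertices is already produced. Fix $B \geq 0$ and an arbitrary multicurve $v \subset \C(S)$. Recall from that proof that for a twist edge $\ee \subset \sigma_0$, one has the containment $\langle \frak f \rangle \cdot \ee \cap \mathcal E(v,B) \subseteq \bigcup_{n \in I_\ee} \frak f^n(\ee)$, where $I_\ee \subset \Z$ is an interval of integers whose width $W_\ee$ is controlled purely by $B$ and the loxodromic constants of the action of $f$ on $\A(A_\ee)$, and in particular does not depend on $v$. Hence the orbit $\langle \frak f \rangle \cdot \ee$ contributes at most $W_\ee$ edges to $\mathcal E(v,B)$.

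Next I would sum over a set of $\langle \frak f \rangle$--orbit representatives of edges of $\sigma_0$: there are exactly $E$ of them, so $|\mathcal E(v,B)| \leq \sum_{j=1}^{E} W_{\ee_j}$, a quantity independent of $v$. Running the identical argument with pseudo-Anosov vertices in place of twist edges---using $Y_\tau$ and the loxodromic action of $f$ on $\AC(Y_\tau)$ (valid by \cite{MM1})---gives $|\mathcal V(v,B)| \leq \sum_{j=1}^{V} W_{\tau_j}$. Taking $N = \max\{\sum_j W_{\ee_j}, \sum_j W_{\tau_j}\}$ then works simultaneously for every multicurve $v$, which is the assertion.

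Alternatively, and perhaps more cleanly, one may argue straight from the statement of Lemma~\ref{L:bounded clusters} together with the finite-valence bound on $\sigma_0$: by Lemma~\ref{L:free action of G} the tree $T_G$ has uniformly finite valence, hence so does the quotient graph $\sigma_0 = T_G/G_0$, so any subset of $\sigma_0$ of diameter at most $M$ contains at most $Q = Q(M)$ edges (resp.\ vertices). Since Lemma~\ref{L:bounded clusters} writes $\mathcal E(v,B)$ as a union of at most $E$ sets of diameter $\leq M$ and $\mathcal V(v,B)$ as a union of at most $V$ such sets, we get $|\mathcal E(v,B)| \leq EQ$ and $|\mathcal V(v,B)| \leq VQ$, and again set $N = \max\{EQ, VQ\}$. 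There is no substantive obstacle in either route; the only point requiring care is exactly the one already supplied by Lemma~\ref{L:bounded clusters}, namely that the widths $W_\ee$ (equivalently the diameter bound $M$ and the valence bound $Q$) are genuinely uniform over all multicurves $v$. Everything else is bookkeeping over the finitely many $\langle \frak f \rangle$--orbit types.
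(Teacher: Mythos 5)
Your proposal is correct and matches the paper's own (one-line) argument, which remarks that the corollary follows immediately from Lemma~\ref{L:bounded clusters} together with the finite-valence bound on $\sigma_0$, ``or more directly from the proof''; you have simply spelled out both of those routes in full.
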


\subsection{Parallel type subtrees proof}

 We now prove that the image under $p_0$ of any parallel subtree of $T_G \cap T_u$ is uniformly bounded. The main fact we need is that large vertex and edge projection can only occur along the leaves of the parallel subtrees.

 \begin{lemma}\label{L:small_proj_parallel_type}
 	There exists $B_1\geq 0$ so that the following holds for each multicurve $u \subset \C^s(S^z)$.
 	\begin{enumerate}
 		\item \label{I:vertex} 	Let $\ell$ be an edge path of length 2 in $T_G \cap T_u$, $t$ be the middle vertex of $\ell$, and $v= \Phi(u)$.   If each vertex of $\ell$ is of parallel type, then $d(\Delta_t,\pi_t (v)) \leq B_1.$ 
 		\item \label{I:edge} Let   $\ell$ be an edge path of length 3 in $T_G \cap T_u$, $e$ be the middle edge of $T_G \cap T_u$, and $v= \Phi(u)$.  If each  vertex of $e$ is of parallel type, then  $d(\Delta_e,\pi_e (v)) \leq B_1.$ 
 	\end{enumerate}
 	
 \end{lemma}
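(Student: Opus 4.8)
The plan is to derive both parts from Lemma~\ref{L:Parallel_type_vertices}. In each case write $v = \Phi(u)$, realized by its geodesic multicurve. Since the edge path $\ell$ has length at least $2$, $T_u$ has diameter at least $2$ and hence is infinite, so by Lemma~\ref{L:options_for_T_u} the stabilizer $K_u$ has no global fixed point in $T$; in particular $\hull_u$ contains a geodesic transverse to $p^{-1}(\alpha)$ and, as recalled in \S\ref{S:fibers and trees}, $p(\partial\hull_u) \subseteq v$, so every component of $\partial\hull_u$ is a component of $p^{-1}(v)$. I will produce $B_1$ depending only on the constant $B_0$ of Corollary~\ref{C:bounded projection}, the self--intersection numbers $\mathbb I(\gamma_j)$ of the arcs associated to the finitely many $G$--orbit representatives of vertices, and the hyperbolic geometry of $S$ near $\alpha$; uniformity over $u$ is the whole point.

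\emph{Vertex case.} Let $t_0, t_1 = t, t_2$ be the vertices of $\ell$, let $e_1, e_2$ be the two edges of $\ell$ incident to $t$, and let $\widetilde\alpha_1 = \widetilde\alpha_{e_1}$, $\widetilde\alpha_2 = \widetilde\alpha_{e_2}$ be the dual geodesics, which lie in $\partial\widetilde Y_t$. Applying Lemma~\ref{L:Parallel_type_vertices} to $\ell$ yields geodesics $\delta_G \subseteq \partial\hull_G$ and $\delta_u \subseteq \partial\hull_u$ transverse to $\widetilde\alpha_1, \widetilde\alpha_2$ and disjoint from one another inside $\widetilde Y_t$. By convexity of $\widetilde Y_t$, the sets $\widetilde\gamma := \delta_G \cap \widetilde Y_t$ and $\widetilde\rho := \delta_u \cap \widetilde Y_t$ are disjoint geodesic chords of $\widetilde Y_t$ from $\widetilde\alpha_1$ to $\widetilde\alpha_2$. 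Since $\delta_G \subseteq \hull_G$, the endpoints of $\widetilde\gamma$ lie in $\widetilde\alpha_{e_i} \cap \hull_G = \widetilde a_{e_i} \subseteq \partial_\alpha\widetilde Z_t$, so the image $\gamma$ of $\widetilde\gamma$ in $Y_t$ under $\widetilde Y_t \to Y_t$ is one of the arcs $\gamma_1, \dots, \gamma_{r(t)}$ defining $\Delta_t$; say $\gamma = \gamma_{j_0}$. On the other hand $\widetilde\rho$ is a component of $p^{-1}(v) \cap \widetilde Y_t$, so its image $\rho$ in $Y_t$ is a component of $v \cap Y_t$; it is a simple essential arc since $Y_t \to S$ is injective on interiors and since a bigon between a subarc of $v$ and a subarc of $\alpha$ would violate minimal position of these geodesic multicurves (the component of $v$ containing $\rho$ crosses $\alpha$). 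Hence $\rho \in \pi_t(v)$. The key input is that $\widetilde\rho$ and $\widetilde\gamma$ cobound in $\widetilde Y_t$ a (possibly degenerate) quadrilateral whose remaining two sides lie on $\widetilde\alpha_1, \widetilde\alpha_2$ and whose interior --- because $t$ is of parallel type, so $\hull_G \cap \hull_u \cap \widetilde Y_t^\circ = \emptyset$ --- meets neither $\hull_G$ nor $\hull_u$; thus $\widetilde\rho$ is $\widetilde\gamma$ pushed across this quadrilateral to the side away from $\hull_G$. Projecting to $Y_t$ exhibits $\rho$ as a simple arc obtained from $\gamma_{j_0}$ by resolving its self--intersections and pushing off, exactly as in the construction of the $\beta_j$ in \S\ref{S:projections}; as there, $i(\rho, \gamma_{j_0}) \le 2\mathbb I(\gamma_{j_0})$, so $\rho \in \Delta_t$. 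Then $\rho \in \Delta_t \cap \pi_t(v)$, and since $\Delta_t$ has diameter at most $B_0$ in $\AC(Y_t)$ and $\pi_t(v)$ is a simplex, $d(\Delta_t, \pi_t(v)) = \diam(\Delta_t \cup \pi_t(v)) \le B_0 + 1$. (Should identifying $\rho$ with a $\beta_j$ on the nose be delicate, it is enough to bound $i(\rho, \gamma_{j_0})$ linearly in $\mathbb I(\gamma_{j_0})$ and invoke that distance in $\AC(Y_t)$ is controlled by intersection number, together with $i(\beta_{j_0}, \gamma_{j_0}) \le 2\mathbb I(\gamma_{j_0})$; here $\mathbb I(\gamma_{j_0})$ takes one of finitely many values because there are finitely many $G$--orbits of vertices and $\Delta_{g(t)} = f^{\phi(g)}(\Delta_t)$ by Lemma~\ref{L:equivariant decoration}.)

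\emph{Edge case.} Let $t_0, t_1, t_2, t_3$ be the vertices of $\ell$, with middle edge $e = [t_1, t_2]$, and let $\widetilde\alpha_1, \widetilde\alpha_e, \widetilde\alpha_3$ be dual to $[t_0, t_1], [t_1, t_2], [t_2, t_3]$. Lemma~\ref{L:Parallel_type_vertices} now gives $\delta_G \subseteq \partial\hull_G$ and $\delta_u \subseteq \partial\hull_u$ transverse to all three geodesics and disjoint from one another in each $\widetilde Y_{t_i}$. Since $\delta_G$ is a geodesic of $\partial\hull_G$ crossing $\widetilde\alpha_e$, it is one of the two geodesics used to define $\Delta_e$, so its image $\bar\gamma$ in $A_e$ lies in $\Delta_e$; and since $\delta_u \subseteq p^{-1}(v)$, its image $\bar\rho$ in $A_e$ lies in $\pi_e(v)$. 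As $\A(A_e)$ is the annular arc complex, $d(\bar\gamma, \bar\rho) \le 1 + i(\bar\gamma, \bar\rho)$, so it suffices to bound the relative twisting of $\delta_G$ and $\delta_u$ about the core geodesic $\widetilde\alpha_e$. This is exactly where the path must have length $3$: each of $\delta_G, \delta_u$ crosses $\widetilde\alpha_e$ once and is then confined to $\widetilde Y_{t_1}$ until it crosses $\widetilde\alpha_1$ and to $\widetilde Y_{t_2}$ until it crosses $\widetilde\alpha_3$, so both ideal endpoints of each geodesic are pinned in definite ``windows'' determined by $\widetilde\alpha_1, \widetilde\alpha_3$. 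Since $\widetilde\alpha_1, \widetilde\alpha_3$ lie at definite distance from $\widetilde\alpha_e$ and $\delta_G, \delta_u$ never cross, this bounds $i(\bar\gamma, \bar\rho)$ by a constant depending only on the hyperbolic geometry of $S$ near $\alpha$. Hence $d(\Delta_e, \pi_e(v)) = \diam(\Delta_e \cup \pi_e(v))$ is bounded independently of $u$ and of $e$, using Corollary~\ref{C:bounded projection} and finiteness of the $G$--orbits of edges. Taking $B_1$ to be the larger of the two bounds completes the proof.

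\emph{Main obstacle.} The bookkeeping --- identifying $\gamma$ with some $\gamma_j$, $\rho$ with an element of $\pi_t(v)$, $\bar\gamma$ with an element of $\Delta_e$, $\bar\rho$ with an element of $\pi_e(v)$, and reducing to intersection/twisting numbers --- is routine. The real work is the ``parallelism'' estimate: upgrading the combinatorial hypothesis, a short path of parallel--type vertices, to uniform closeness of the $Y_t$-- (respectively $A_e$--)projections of $\delta_u$ and $\delta_G$. For vertices this is the claim that $\rho$ is a genuine simple push--off of $\gamma_{j_0}$, controlling $i(\rho, \gamma_{j_0})$ via the parallel quadrilateral; for edges it is bounding the relative twisting of two disjoint boundary geodesics that both pierce $\widetilde\alpha_e$ and are pinned on either side by $\widetilde\alpha_1$ and $\widetilde\alpha_3$ --- which is precisely why the edge statement needs length $3$ rather than $2$.
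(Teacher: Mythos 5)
Your overall strategy matches the paper's: invoke Lemma~\ref{L:Parallel_type_vertices} to extract parallel boundary geodesics $\delta_G\subset\partial\hull_G$ and $\delta_u\subset\partial\hull_u$, then compare their projections.  But the crucial estimates in both cases are not carried out correctly.

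For the vertex case, you argue that $\rho$ is a push--off of $\gamma_{j_0}$ obtained by ``resolving its self-intersections,'' and invoke the inequality $i(\rho,\gamma_{j_0})\leq 2\,\mathbb{I}(\gamma_{j_0})$.  But this is confused: the quadrilateral you describe gives a homotopy \emph{rel boundary} of $\widetilde\gamma$ to $\widetilde\rho$, which descends to a homotopy rel $\partial Y_t$ of $\gamma_{j_0}$ to $\rho$.  Since $\gamma_{j_0}$ is a geodesic representative of its rel-boundary homotopy class and $\rho$ is simple, $\gamma_{j_0}$ must itself be simple (so $\mathbb{I}(\gamma_{j_0})=0$) and is \emph{equal} to $\rho$ in $\AC(Y_t)$.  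The paper uses exactly this observation, concluding directly that $\Delta_t\cap\pi_t(v)\neq\emptyset$.  Your ``push--off / resolving'' picture and the fallback argument about linear bounds in $\mathbb{I}(\gamma_{j_0})$ are not needed, and as written are not justified.

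The genuine gap is in the edge case.  You assert that because $\delta_G,\delta_u$ both cross $\widetilde\alpha_1$ and $\widetilde\alpha_3$, their endpoints are ``pinned in definite windows'' and thus the relative twisting about $\widetilde\alpha_e$ is bounded ``by a constant depending only on the hyperbolic geometry of $S$ near $\alpha$.''  This does not follow, and the dependence you posit is not the right one: the bound the paper obtains is purely combinatorial and does not involve any metric constants.  What is actually needed is the following.  Let $R$ be the ideal convex quadrilateral with opposite sides $\widetilde\alpha_1,\widetilde\alpha_3$ and remaining sides $\beta_1,\beta_2$.  Since any geodesic crossing both $\widetilde\alpha_1$ and $\widetilde\alpha_3$ has its ideal endpoints in the two opposite boundary arcs cut off by those geodesics, both $\delta_G$ and $\delta_u$ are disjoint from $\beta_1$ and $\beta_2$.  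The key combinatorial claim --- which you do not make or prove --- is that $g(R)\cap R=\emptyset$ for every nontrivial $g\in K_{e_2}$, because no two geodesics of $p^{-1}(\alpha)$ intersect and any such $g$ moves each of $\widetilde\alpha_1,\widetilde\alpha_3$ to a different leaf.  Hence $R$ embeds into $A_{e_2}$, the image of $\beta_1$ is an essential arc of $\A(A_{e_2})$, and each of the images of $\delta_G$ and $\delta_u$ is at distance $1$ from it, giving $d(\Delta_{e_2},\pi_{e_2}(v))\leq B_0+3$.  Without some version of this embedding argument, the ``windows'' heuristic does not produce a uniform bound.
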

 
 \begin{proof}
 	First let $\ell$ be path of length 2 in $T_G \cap T_u$. Let $e_1$, $e_2$ be the edges of $\ell$ and $t$ be the middle vertex.  Let  $\widetilde \alpha_i$ be the geodesic of $p^{-1}(\alpha)$ that is dual to the edge $e_i$. If each vertex of $\ell$ is of parallel type, then Lemma \ref{L:Parallel_type_vertices} says there exist geodesics  $\delta_G \subseteq \partial \hull_G$ and $\delta_u \subseteq \partial \hull_u$ that intersect $\widetilde \alpha_1$ and $\widetilde \alpha_2$, but do not intersect in $\widetilde{Y}_{t}$. Hence there is a straight line homotopy  relative $\partial \widetilde{Y}_t$ of $\delta_G \cap \widetilde{Y}_t$ to $\delta_u \cap \widetilde{Y}_t$. Since $\delta_u$ and $\delta_G$ intersect the same components of $\partial \widetilde{Y}_t$, this straight line homotopy descends to a  homotopy relative $\partial Y_t$ of $p(\delta_G \cap \widetilde{Y}_t)$ to $p( \delta_u \cap \widetilde{Y}_t)$.  In particular, $p(\delta_G \cap \widetilde Y_t)$ is an arc on $Y_t$  that is equal to $p(\delta_u \cap \widetilde Y_t)$ as an element of $\AC(Y_t)$. Since $p(\delta_u\cap \widetilde{Y}_t) \subseteq \pi_t (v)$ and  $p(\delta_G \cap \widetilde{Y}_t) \subseteq \Delta_t$, part \eqref{I:vertex} of the lemma now follows from Corollary \ref{C:bounded projection} for any $B_1 \geq B_0 + 1$.

 	Now, let $\ell$ be a path of length 3 in $T_G \cap T_u$. Let $e_1,e_2,e_3$ be the edges of $\ell$ and  $\widetilde \alpha_i$ be the geodesic of $p^{-1}(\alpha)$ that is dual to the edge $e_i$. If each vertex of $\ell$ is of parallel type, then Lemma \ref{L:Parallel_type_vertices} says there exist geodesics  $\delta_G \subseteq \partial \hull_G$ and $\delta_u \subseteq \partial \hull_u$ that intersect each of  $\widetilde \alpha_1$, $\widetilde \alpha_2$, and $\widetilde \alpha_3$, but do not intersect each other between $\widetilde \alpha_1$ and $\widetilde \alpha_3$. Now,  $\widetilde \alpha_1$ and $\widetilde \alpha_3$ determine a unique convex ideal rectangle $R$; see Figure \ref{F:rectangle}. Let $\beta_1$ and $\beta_2$  be the other two sides of $R$.
 	
 	\begin{figure}[htb]
 		\begin{tikzpicture}
 			\node[anchor=south west,inner sep=0] (image) at (0,0) {\includegraphics[width=4cm]{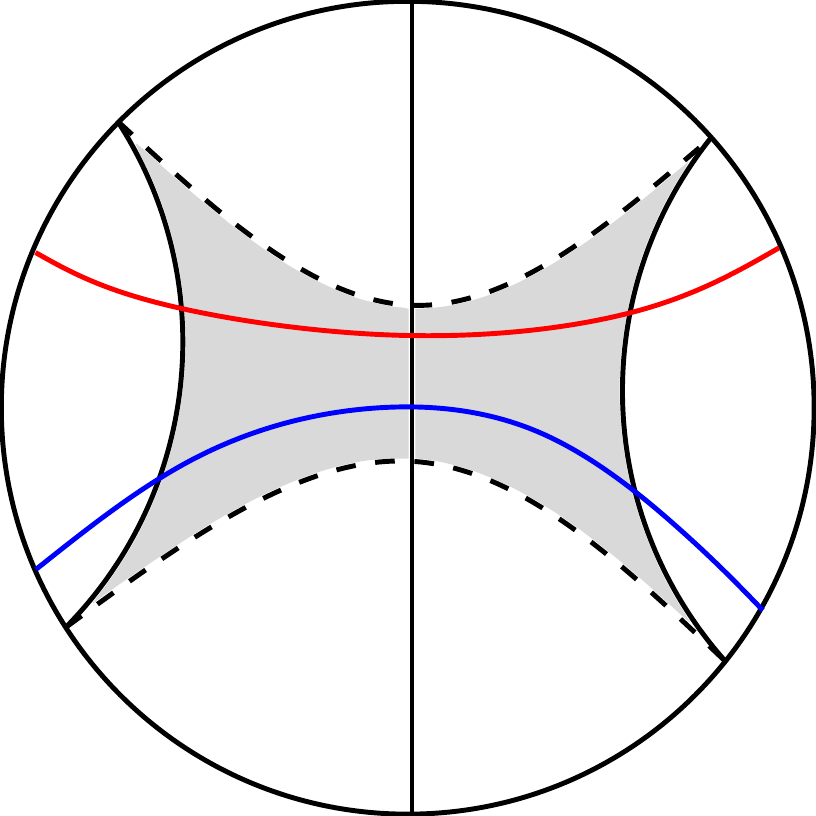}};
 			\begin{scope}[x={(image.south east)},y={(image.north west)}]
 				\node at (.17,.53) {$\widetilde \alpha_1$};
 				\node at (.5,1.05) {$\widetilde \alpha_2$};
 				\node at (.83,.52) {$\widetilde \alpha_3$};
 				\node at (1.02,.72) {\color{red} $\delta_G$};
 				\node at (1,.22) {\color{blue} $\delta_u$};
 				\node at (.4,.75) {$\beta_1$};
 				\node at (.4,.35) {$\beta_2$};
 			\end{scope}
 		\end{tikzpicture}
 		\caption{The shaded rectangle $R$ is the unique ideal rectangle  with sides $\widetilde \alpha_1$ and $\widetilde \alpha_3$.} \label{F:rectangle}
 	\end{figure}

 	We claim that for each non-identity element $g \in K_{e_2}$, we have $g  (R) \cap R = \emptyset$. If this is true, then the rectangle $R$ will embed onto the annulus $A_{e_2} = \mathbb{H}^2 /K_{e_2}$.  In particular, the images of $\delta_u$ and $\delta_G$ would be disjoint from the image of $\beta_1$ in $A_{e_2}$. Since  $\delta_G \subseteq \partial \hull_G$ and $\delta_u \subseteq \partial \hull_u$, these project to arcs in $\Delta_{e_2} \subset \A(A_{e_2})$ and $\pi_{e_2}(v) \subset \A(A_{e_2})$, respectively,  which are distance at most $2$ apart in $\A(A_{e_2})$.  Therefore,
	\[ d(\Delta_{e_2},\pi_{e_2}(v)) \leq 2 + B_0 + 1= B_0 + 3\]
by Corollary~\ref{C:bounded projection}, and thus, setting $B_1 = B_0+3$ proves part \eqref{I:edge} of the lemma.
 	
 	It therefore remains to prove that $g  (R) \cap R = \emptyset$ for each non-trivial $g \in  K_{e_2}$. First recall that  $K_{e_2}=\Stab_{\pi_1 S}(\widetilde \alpha_2)$ is a cyclic group generated by a hyperbolic isometry of $\mathbb H^2$ with axis $\widetilde \alpha_2$. Since $\widetilde \alpha_1$ and $\widetilde \alpha_3$ lie on different sides of $\widetilde \alpha_2$, the only way for $g  (R) \cap R\neq \emptyset$ is for  $g \widetilde (\alpha_i) \cap \widetilde \alpha_i \neq \emptyset$ for either $i=1$ or $i=3$ (or both). However, no two geodesics in $p^{-1}(\alpha)$ intersect because $\alpha$ is a collection of disjoint simple closed curves, and every non-identity element of $K_{e_2}$ takes every element of $p^{-1}(\alpha) - \{\widetilde \alpha_2\}$ to a different element of  $p^{-1}(\alpha) - \{\widetilde \alpha_2\}$. Together these imply that $g  \widetilde (\alpha_i) \cap \widetilde \alpha_i = \emptyset$ for $i \in \{1,3\}$ and $g \in K_{e_2}$ not equal to the identity.  Hence $g (R) \cap R = \emptyset$ for each non-trivial $g \in K_{e_2}$.  This proves the claim above, and hence the lemma.
 \end{proof}
 
 Combining Lemma \ref{L:small_proj_parallel_type} with Corollary \ref{C:finitely_many_small_labels} will produce the desired bound on the images of the parallel subtrees.

 \begin{lemma}\label{L:bound_on_parallel_type_diam}
 	There exists $D_1 \geq 0$ so that  for any multicurve $u \subset \C(S^z)$, if $P$ is a parallel subtree of $T_G \cap T_u$, then $\diam(p_0 (P)) \leq D_1$.
 \end{lemma}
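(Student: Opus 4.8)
The plan is to bound the length of an edge path $\ell$ with all vertices of parallel type inside $T_G \cap T_u$, and then project to $\sigma_0$. By Lemma~\ref{L:translating suffices} and equivariance of $p_0|_{T_G}$, it suffices to bound $\diam(p_0(P))$ for a parallel subtree $P$, and we may freely translate $u$ by an element of $G$ (hence $\Phi(u) = v$ by a power of $f$) to arrange things conveniently; in particular, by Lemma~\ref{L:options_for_T_u} we may assume $T_u$ has infinite diameter and $u \subset \C^s(S^z)$. Fix a geodesic edge path $t_0, t_1, \ldots, t_N$ in $P$ projecting to a geodesic in $\sigma_0$ of length $\diam(p_0(P))$; we want to bound $N$. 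Set $v = \Phi(u)$. The key dichotomy is Lemma~\ref{L:ubiquitous}: every length-2 subpath of $T_G$ contains either a twist edge or a pseudo-Anosov vertex. So as we march along $\ell$, at a definite linear rate we encounter either a twist edge or a pA vertex. For each such twist edge $e$ that is \emph{interior} to $\ell$ (i.e.\ sits inside a length-3 subpath all of whose vertices are parallel type), Lemma~\ref{L:small_proj_parallel_type}\eqref{I:edge} gives $d(\Delta_e, \pi_e(v)) \leq B_1$, so its image $\ee = p_0(e)$ lies in $\mathcal E(v, B_1)$; likewise, every interior pA vertex $t$ (middle of a length-2 all-parallel subpath) has $d(\Delta_t, \pi_t(v)) \leq B_1$ by part~\eqref{I:vertex}, so $\tau = p_0(t) \in \mathcal V(v, B_1)$.

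Now I would invoke Corollary~\ref{C:finitely_many_small_labels} with $B = B_1$: there is $N_0$, independent of $v$, with $|\mathcal E(v, B_1)| \leq N_0$ and $|\mathcal V(v, B_1)| \leq N_0$. So the number of \emph{distinct} edges of $\sigma_0$ of the form $p_0(e)$ with $e$ an interior twist edge of $\ell$, plus the number of distinct vertices of $\sigma_0$ of the form $p_0(t)$ with $t$ an interior pA vertex of $\ell$, is at most $2N_0$. Combined with the linear rate from Lemma~\ref{L:ubiquitous}, this shows: if $N$ is large, the projected path $p_0(\ell)$ must traverse some edge of $\sigma_0$ (or pass through some vertex of $\sigma_0$) at least twice, i.e.\ $p_0(\ell)$ backtracks or repeats. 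But $p_0(\ell)$ was chosen to be a \emph{geodesic} in $\sigma_0$ realizing $\diam(p_0(P))$, so it cannot repeat an edge or vertex. Hence $N$ — and therefore $\diam(p_0(P))$ — is bounded by an explicit function of $N_0$, the valence of $\sigma_0$, and the (finitely many) $\sigma_0$-diameters of fundamental domains for $\langle \frak f \rangle$; call this bound $D_1$. Since $N_0$ and these quantities depend only on $G$ and the fixed metric, $D_1$ is independent of $u$, as required.

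The main obstacle I anticipate is the bookkeeping at the transition between ``interior'' and ``boundary'' behavior along $\ell$: Lemma~\ref{L:small_proj_parallel_type} only controls twist edges/pA vertices that are sufficiently deep in the parallel path (length-3 resp.\ length-2 all-parallel neighborhoods), so the endpoints of $\ell$ contribute a bounded-but-nonzero error that must be absorbed into the constant. One must also be careful that an edge or vertex of $\sigma_0$ could be the image of several edges/vertices of $\ell$ lying in distinct $\langle \frak f\rangle$-translates — but this is precisely why we phrase the repetition argument in $\sigma_0$ rather than in $T_G$, and why the geodesic choice of $p_0(\ell)$ is what forces the contradiction. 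A second, more routine point is to translate ``bounded number of edges in $\mathcal E(v,B_1)$ encountered'' together with ``Lemma~\ref{L:ubiquitous} forces one every two steps'' into an honest linear bound on $N$; this is elementary once the finiteness input from Corollary~\ref{C:finitely_many_small_labels} is in hand.
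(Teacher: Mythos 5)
Your strategy follows the paper's: you invoke the same three ingredients (Lemma~\ref{L:small_proj_parallel_type}, Corollary~\ref{C:finitely_many_small_labels}, and Lemma~\ref{L:ubiquitous}) and the same bookkeeping of interior twist edges and pseudo-Anosov vertices. However, the final assembly has a genuine gap. You ``fix a geodesic edge path $t_0,\ldots,t_N$ in $P$ projecting to a geodesic in $\sigma_0$ of length $\diam(p_0(P))$,'' and then run a pigeonhole-plus-geodesicity contradiction to bound $N$. But $p_0|_{T_G}\colon T_G\to\sigma_0$ is the quotient by the free $G_0$--action, hence a covering map of graphs; a geodesic in the tree $T_G$ (or in the subtree $P$) can wind around cycles of $\sigma_0$ and therefore need not project to a geodesic, nor even to an injective path, in $\sigma_0$. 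There is no reason such a distinguished geodesic $\ell$ should exist, and without that, the claim ``$p_0(\ell)$ cannot repeat an edge or vertex'' that you use to derive a contradiction has no support. Relatedly, you are trying to bound $N$ (the $T_G$--length of $\ell$), but that is not the quantity you need: $\diam(p_0(\ell))$ can be much smaller than $N$, precisely because $p_0(\ell)$ may backtrack, and it is $\diam(p_0(\ell))$ that must be controlled.

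The paper sidesteps all of this by bounding $\diam(p_0(\ell))$ directly via a covering argument, with no geodesicity claim in $\sigma_0$ at all: after Lemma~\ref{L:small_proj_parallel_type} and Corollary~\ref{C:finitely_many_small_labels} show that $\gamma=p_0(\ell)$ contains at most $N+2$ distinct twist edges and at most $N+2$ distinct pA vertices, Lemma~\ref{L:ubiquitous} gives that every edge of $\gamma$ is a twist edge or has a pA vertex as an endpoint, so $\gamma$ is contained in the union of the $1$--neighborhoods of its pA vertices and its twist edges. This is a connected cover of $\gamma$ by at most $2(N+2)$ sets of diameter at most $2$, giving $\diam(\gamma)\le 3N+6$. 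If you replace your repetition/geodesicity step with this finite-cover count (which is essentially what your ``distinct edges/vertices'' observation is already pointing toward), your argument goes through; as written, the unjustified geodesic choice makes the contradiction vacuous.
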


 \begin{proof} Let $N$ be the constant from Corollary~\ref{C:finitely_many_small_labels} for $B = B_1$.  We claim that taking $D_1 = 3N+6$ will suffice to prove the lemma.
 	
	Let $\tau = p_0(t)$ and $\tau' =p_0(t')$ be vertices of $p_0(P)$. Let  $\ell$ be a geodesic path in $P$ from $t$ to $t'$and let $\gamma$ be the path in $\sigma_0$ that is the image of $\ell$ under $p_0$.  Let $t=t_0,\dots, t_n = t'$ be the vertices of $\ell$ and let $e_i$ be the edge of $\ell$ between $t_{i-1}$ and $t_{i}$. Let $v = \Phi(u)$. By Lemma \ref{L:small_proj_parallel_type}, we have \[ d(\Delta_{e_i},\pi_{e_i} (v))\leq B_1 \text{ and }d(\Delta_{t_j},\pi_{t_j}(v)) \leq B_1\] for each $i \in \{2,\dots, n-2\}$ and $j \in \{1,\dots,n-1\}$. Since $$d(\Delta_{e_i},\pi_{e_i} (v)) = d(\Delta_{p_0(e_i)},\pi_{p_0(e_i)}(v)) \text{ and } d(\Delta_{t_j},\pi_{t_j}(v)) = d(\Delta_{p_0(t_j)},\pi_{p_0(t_i)}(v)),$$   Corollary \ref{C:finitely_many_small_labels} implies that the path $\gamma \subset \sigma_0$ contains at most $N+2$ distinct twist edges and at most $N+2$ distinct pseudo-Anosov vertices.  Lemma \ref{L:ubiquitous} implies that every edge  of  $\gamma$ is either a twist edge or has  a pseudo-Anosov vertex as an endpoint. Hence, $\gamma$ is contained in \[ \bigcup \{N_1(\tau) \mid {\tau \text{ a pA vertex of } \gamma}\} \cup \bigcup\{ \ee \mid {\ee \text{ a twist edge of }\gamma}\}. \] Since the diameter of  $N_1(\tau)$  is at most 2, and there are at most $N+2$ pseudo-Anosov vertices and twist edges, we have $$\diam(\gamma) \leq 2(N+2) + (N+2) = 3N+6.$$ Since $\gamma = p_0(\ell)$ and $\ell$ is an arbitrary path in $P$, this implies $\diam(p_0(P)) \leq 3N+6 = D_1$ as desired. 
 \end{proof} 

\subsection{Hull type subtree proof}

Recall that $T^\hull_{u,G} \subset T_u \cap T_G$ is the hull subtree, as defined in \S\ref{S:subtrees}.  In this subsection we prove the following.

\begin{lemma} \label{L:hull subtree bound} There exists $D_2 > 0$ so that if $u \subset \C^s(S^z)$ is a simplex, then $\diam(p_0(T^\hull_{u,G})) \leq D_2$.
\end{lemma}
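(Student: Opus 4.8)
The plan is to run the same machinery used in Lemma \ref{L:bound_on_parallel_type_diam}, but now for hull type vertices, where we must control the vertex and edge projections $d(\Delta_t,\pi_t(v))$ and $d(\Delta_e,\pi_e(v))$ along a geodesic in $T^\hull_{u,G}$. Fix a simplex $u \subset \C^s(S^z)$, set $v = \Phi(u)$, and (by Lemma~\ref{L:options_for_T_u}) assume $T_u$ has infinite diameter, so that every vertex of $T_{u,G}^{\hull}$ satisfies $\hull_G \cap \hull_u \cap \widetilde Y_t^\circ \neq \emptyset$. Let $t_0,\dots,t_n$ be a geodesic edge path in $T^\hull_{u,G}$, with dual edges $e_1,\dots,e_n$ and dual geodesics $\widetilde\alpha_i \subset p^{-1}(\alpha)$. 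The key claim will be: there is a constant $B_2 \geq 0$, independent of $u$, so that for each interior vertex $t_j$ we have $d(\Delta_{t_j},\pi_{t_j}(v)) \leq B_2$, and for each interior edge $e_i$ we have $d(\Delta_{e_i},\pi_{e_i}(v)) \leq B_2$. Granting this claim, Corollary~\ref{C:finitely_many_small_labels} bounds the number of twist edges and pseudo-Anosov vertices of $p_0(\ell)$ in $\sigma_0$ by a constant depending only on $B_2$ and $G$; Lemma~\ref{L:ubiquitous} says every edge of $p_0(\ell)$ is a twist edge or is adjacent to a pseudo-Anosov vertex; so, exactly as in the proof of Lemma~\ref{L:bound_on_parallel_type_diam}, $\diam(p_0(\ell))$ is uniformly bounded, giving the desired $D_2$.

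To prove the claim, fix an interior vertex $t = t_j$. Since $t$ is hull type there is a point $x \in \hull_G \cap \hull_u \cap \widetilde Y_t^\circ$; since $t_{j-1}$ and $t_{j+1}$ also lie in $T^\hull_{u,G}$, convexity lets us choose points $x_- , x_+ \in \hull_G \cap \hull_u$ with $x_{\pm}$ inside $\widetilde Y_{t_{j\mp 1}}^\circ$ (or on the appropriate $\widetilde\alpha$), so the geodesic $[x_-,x_+] \subset \hull_G \cap \hull_u$ crosses $\widetilde Y_t$ transversely, entering through $\widetilde\alpha_j$ and leaving through $\widetilde\alpha_{j+1}$. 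This geodesic segment lies in $\hull_G$, hence is homotopic rel $p^{-1}(\alpha)$ into a geodesic arc in $\widetilde Z_t$ whose projection $\gamma$ to $Y_t$ is one of the finitely many arcs defining $\Delta_t$; thus $p([x_-,x_+]\cap\widetilde Y_t)$ represents an element of $\AC(Y_t)$ that is distance $\leq 1$ from $\Delta_t$ (by the surgery construction giving the $\beta_j$ in \S\ref{S:projections}). On the other hand this same segment lies in $\hull_u$, so $p(\partial\hull_u \cap \widetilde Y_t) \subseteq \pi_t(v)$ as in \S\ref{S:fibers and trees}, and again by convexity the arc $p([x_-,x_+]\cap \widetilde Y_t)$ can be taken to lie along $\partial\hull_u$, hence within distance $1$ of $\pi_t(v)$; combining with Corollary~\ref{C:bounded projection} gives $d(\Delta_t,\pi_t(v)) \leq B_0 + 2 =: B_2$. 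The edge estimate for $e = e_i$ is the same argument applied to the annulus $A_e = \mathbb H^2/K_e$: the geodesic $[x_-,x_+]$ crosses $\widetilde\alpha_i$ and, running between $\widetilde\alpha_{i-1}$ and $\widetilde\alpha_{i+1}$ on opposite sides of $\widetilde\alpha_i$, descends to an arc in $A_e$ that lies within bounded distance of both the image of $\partial\hull_G$ (i.e.\ $\Delta_e$) and the image of $v$ (i.e.\ $\pi_e(v)$), as in the proof of Lemma~\ref{L:small_proj_parallel_type}\eqref{I:edge}; enlarging $B_2$ if necessary absorbs the additive constants.

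The main obstacle I anticipate is the bookkeeping at the two endpoints of the segment $[x_-,x_+]$: one must be careful that, for an \emph{interior} vertex/edge, the geodesic in $\hull_G \cap \hull_u$ genuinely passes all the way through the dual region $\widetilde Y_t$ (resp.\ crosses $\widetilde\alpha_e$ with the segment extending to $\widetilde\alpha_{e-1}$ and $\widetilde\alpha_{e+1}$), so that its projection is an \emph{essential} arc realizing both a point of (a bounded neighborhood of) $\Delta_t$ and a point of $\pi_t(v)$ — this uses that the equivariant map $\mathbb H^2 \to T$ can be chosen to send this geodesic to the edge path $t_0,\dots,t_n$, exactly the adjustment made in Lemma~\ref{L:hull type subtree}. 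Once the essentiality and the ``both in $\hull_G$ and in $\hull_u$'' features are pinned down, the rest is a routine repackaging of Corollaries~\ref{C:bounded projection} and \ref{C:finitely_many_small_labels} together with Lemmas~\ref{L:ubiquitous} and \ref{L:bound_on_parallel_type_diam}'s counting argument.
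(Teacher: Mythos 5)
Your plan is to re-run the counting argument from Lemma~\ref{L:bound_on_parallel_type_diam}: show interior vertices/edges of $T^{\hull}_{u,G}$ have uniformly bounded projection distance, then invoke Corollary~\ref{C:finitely_many_small_labels} to bound the number of twist edges and pseudo-Anosov vertices, and then use Lemma~\ref{L:ubiquitous}. This has a genuine gap, and it is precisely the gap the paper's Lemma~\ref{L:lengths outside} is designed to fill. Corollary~\ref{C:finitely_many_small_labels} bounds the size of $\mathcal E(v,B)$ and $\mathcal V(v,B)$, and membership in these sets \emph{requires} $\pi_\ee(v)\neq\emptyset$ (resp.\ $\pi_\tau(v)\neq\emptyset$). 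In the parallel-type case this condition is free: Lemma~\ref{L:Parallel_type_vertices} produces a boundary geodesic $\delta_u\subset\partial\hull_u\subset p^{-1}(v)$ crossing each $\widetilde\alpha_{e_i}$, forcing $\pi_{e_i}(v)\neq\emptyset$ and $\pi_{t_i}(v)\neq\emptyset$. In the hull-type case there is no such $\delta_u$, and indeed interior edges and vertices of $T^{\hull}_{u,G}$ can have $\pi_e(v)=\emptyset$ (for instance, $\hull_u$ can entirely contain $\widetilde\alpha_e$, so that $v$ is disjoint from $\alpha_e$). Such edges/vertices are invisible to $\mathcal E(v,B_2)$ and $\mathcal V(v,B_2)$, so Corollary~\ref{C:finitely_many_small_labels} gives you no bound on how many of them can appear consecutively. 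The paper bounds exactly these stretches via a separate, unrelated argument (Lemma~\ref{L:lengths outside}): if a long subpath of $T^{\hull}_{u,G}$ has all projections empty, pigeonhole produces two $G$-translate edges $e_0=g(e_1)$, and one builds a $g$-invariant bi-infinite path disjoint from $p^{-1}(w)$ for some simple closed curve $w$, forcing $g$ to stabilize a curve in $\C(S^z)$ and contradicting pure pseudo-Anosov. Nothing in your proposal touches this.

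There is also a local problem with your "small projection" claim itself. The vertex argument is salvageable (the segment $[x_-,x_+]\cap\widetilde Y_t$ is one of the arcs $\gamma_j$ defining $\Delta_t$ and is disjoint from $v\cap Y_t$, which gives $\pi_t(v)\subset\Delta_t$ directly — the phrase ``take it to lie along $\partial\hull_u$'' is not the right reason, since $[x_-,x_+]$ sits in the interior of $\hull_u$). But the edge argument breaks: $[x_-,x_+]$ is a compact segment with no well-defined class in $\A(A_e)$, and it is \emph{not} close to the boundary geodesics of $\hull_G$ that define $\Delta_e$; the rectangle argument in Lemma~\ref{L:small_proj_parallel_type}\eqref{I:edge} only works because $\delta_G\subset\partial\hull_G$, which you don't have here. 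The paper instead proves the contrapositive (Lemma~\ref{L:dead ends}\eqref{I:edge bound}: large $d(\Delta_e,\pi_e(v))$ forces $T^{\hull}_{u,G}=e$) via a finite-sheeted cover of $A_e$ in which sufficiently-twisting strands of $p^{-1}(v)$ pin $\hull_u\cap\hull_G$ to a small region. In short: your approach collapses to the paper's argument only after you (a) replace your local projection claims with the paper's dead-end dichotomy of Lemma~\ref{L:dead ends}, and (b) add an entirely new ingredient — Lemma~\ref{L:lengths outside} — to handle the long stretches with empty projections that your count cannot see.
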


The first ingredient in the proof of this lemma is the following.

\begin{lemma} \label{L:dead ends} There exists a constant $B_2 \geq B_0$ with the following property.  For any multicurve $u \subset \C^s(S^z)$, edge $e \subset T^\hull_{u,G}$, and vertex $t \in T^\hull_{u,G}$, we have the following.
	\begin{enumerate}
		\item \label{I:edge bound} If $d(\Delta_e,\pi_e (\Phi(u))) > B_2$, then $T^\hull_{u,G} = e$.
		\item \label{I:vertex bound} If $d(\Delta_t,\pi_t (\Phi(u))) > B_2$, then $t$ is a valence $1$ vertex of $T^\hull_{u,G}$.
	\end{enumerate}
\end{lemma}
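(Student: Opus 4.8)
The plan is to exploit the same mechanism used to bound parallel subtrees, namely that large subsurface/annular projection of $v = \Phi(u)$ forces the hull $\hull_u$ to have a specific, restricted shape near the relevant region, and that this shape is incompatible with $\hull_G$ and $\hull_u$ both having deep intersection inside neighboring complementary regions. First I would fix $e \subset T^\hull_{u,G}$ with endpoints $t, t'$. Since $e$ is in the hull subtree, both $\hull_G$ and $\hull_u$ cross $\widetilde\alpha_e$, and by Lemma~\ref{L:options_for_T_u}\eqref{I:T_hull=H-hull} both also meet $\widetilde Y_t^\circ$ and $\widetilde Y_{t'}^\circ$ (we may assume $T_u$ has infinite diameter, else $T^\hull_{u,G}$ has diameter $\le 1$ and there is nothing to prove). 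The point is that $\pi_e(v)$ is the image in $\A(A_e)$ of $p^{-1}(v) \cap \widetilde\alpha_e$-adjacent arcs, while $\Delta_e$ is the image of the two boundary geodesics of $\partial\hull_G$ crossing $\widetilde\alpha_e$. If these are very far apart in $\A(A_e)$, then in the annular cover the arcs of $\pi_e(v)$ wind many times relative to those of $\Delta_e$; lifting back to $\mathbb H^2$, this says that the two boundary geodesics of $\partial\hull_u$ meeting $\widetilde\alpha_e$ must ``escape'' past the $\hull_G$-boundary geodesics on both sides of $\widetilde\alpha_e$, and hence $\hull_u$ cannot reach into $\widetilde Y_t^\circ$ or $\widetilde Y_{t'}^\circ$ far enough to meet $\hull_G$ there. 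More precisely, I would show that if $d(\Delta_e, \pi_e(v)) > B_2$ then there is no point of $\hull_G \cap \hull_u$ in $\widetilde Y_t^\circ$ or $\widetilde Y_{t'}^\circ$, which by the hull-type criterion forces both $t$ and $t'$ to lie outside $T^\hull_{u,G}$; combined with the fact that $T^\hull_{u,G}$ is a subtree (Lemma~\ref{L:hull type subtree}) containing the edge $e$, this forces $T^\hull_{u,G} = e$.

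For part~\eqref{I:vertex bound}, the argument is analogous using $Y_t$ in place of $A_e$. Here I would compare $\pi_t(v) \subset \AC(Y_t)$ with $\Delta_t$, which records (with bounded slack, via self-intersection numbers) the arcs $p(\widetilde\gamma)$ for geodesic arcs $\widetilde\gamma \subset \widetilde Z_t$ joining sides of $\partial_\alpha \widetilde Z_t$. If $d(\Delta_t, \pi_t(v)) > B_2$ then $v$ intersects $Y_t$ in arcs/curves that are very far in $\AC(Y_t)$ from every arc cut out of $\hull_G$ by $\widetilde Y_t$; this should force $\hull_u \cap \widetilde Y_t^\circ$ to be ``thin'' in all but at most one of the directions out of $\widetilde Y_t$ (i.e.\ $\hull_u$ can only reach $\widetilde Y_t$ through essentially one adjacent edge), so that for every edge $e$ adjacent to $t$ except possibly one, $\hull_G$ and $\hull_u$ cannot both cross $\widetilde\alpha_e$ and meet in the next region. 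Translating back: all but at most one of the edges of $T_G$ at $t$ fail to lie in $T^\hull_{u,G}$, so $t$ has valence $\le 1$ in $T^\hull_{u,G}$. The clean way to run this is probably to suppose $t$ has valence $\ge 2$ in $T^\hull_{u,G}$, take two hull-type edges $e, e'$ at $t$, find points $x \in \hull_G \cap \hull_u \cap \widetilde\alpha_e$ and $x' \in \hull_G \cap \hull_u \cap \widetilde\alpha_{e'}$ (using convexity and the hull-type condition at the far endpoints), and observe that the geodesic $[x,x']$ lies in both hulls and projects to an arc of $Y_t$ lying simultaneously within bounded distance of $\pi_t(v)$ and of $\Delta_t$, contradicting $d(\Delta_t,\pi_t(v)) > B_2$ for $B_2$ larger than $B_0$ plus the intersection-number constant.

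The uniformity of $B_2$ comes from Corollary~\ref{C:bounded projection}: $\diam(\Delta_e), \diam(\Delta_t) \le B_0$, and distance in $\A(A_e)$ or $\AC(Y_t)$ is controlled by intersection number (as in \cite{MM1}), so the slack introduced by the surgery construction of $\Delta_t$ and by the ``distance $\le 2$'' estimate for disjoint arcs is a universal constant; one then takes $B_2 = B_0 + c$ for the appropriate absolute constant $c$. The main obstacle I anticipate is making rigorous the geometric claim that \emph{large projection of $v$ obstructs $\hull_u$ from meeting $\hull_G$ in the neighboring regions} --- i.e.\ converting ``the arcs of $\pi_e(v)$ and $\Delta_e$ are far apart in the arc graph of the annulus'' into ``the boundary geodesics of $\hull_u$ and $\hull_G$ are forced apart in $\mathbb H^2$.'' The right tool is likely the elementary hyperbolic geometry of how a geodesic crossing $\widetilde\alpha_e$ and then entering $\widetilde Y_t^\circ$ projects to $A_e$: if $\hull_u$ met $\hull_G$ inside $\widetilde Y_t^\circ$, one could build a path from an arc of $\pi_e(v)$ to an arc of $\Delta_e$ staying in a bounded region of $A_e$, bounding their distance — the contrapositive of which is exactly what we want. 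I expect this to be where the real work lies, with the rest being bookkeeping with the subtree structure and Corollary~\ref{C:bounded projection}.
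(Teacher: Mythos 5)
Your plan for part~\eqref{I:vertex bound} matches the paper's argument in substance: if $t$ has valence $\ge 2$ in $T^\hull_{u,G}$, you take a geodesic arc $\widetilde\gamma$ in $\hull_G\cap\hull_u$ crossing $\widetilde Y_t$ from one adjacent $\widetilde\alpha$ to another; its image $\gamma$ in $Y_t$ is one of the arcs $\gamma_j$ used to define $\Delta_t$, and since $\gamma$ is disjoint from $v\cap Y_t$, every element of $\pi_t(v)$ is disjoint from $\gamma$ and hence lies in $\Delta_t$ outright. This gives $d(\Delta_t,\pi_t(v))\le \diam(\Delta_t)\le B_0$ directly --- a slightly cleaner bookkeeping than your ``within bounded distance of both'' phrasing, but the same mechanism. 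That part is fine with $B_2\ge B_0$.

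There is, however, a genuine logical error in your plan for part~\eqref{I:edge bound}. You propose to show that $d(\Delta_e,\pi_e(v))>B_2$ forces ``no point of $\hull_G\cap\hull_u$ in $\widetilde Y_t^\circ$ or $\widetilde Y_{t'}^\circ$,'' hence that $t,t'\notin T^\hull_{u,G}$, and conclude $T^\hull_{u,G}=e$. But the hypothesis is that $e$ is already an edge of $T^\hull_{u,G}$, which is the span of the hull-type vertices; in particular both endpoints $t,t'$ are hull type, i.e.\ $\hull_G\cap\hull_u$ \emph{does} meet $\widetilde Y_t^\circ$ and $\widetilde Y_{t'}^\circ$. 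Your claimed conclusion is therefore inconsistent with the hypothesis, and if your geometric estimate actually proved it, the only logical consequence would be that the hypothesis $d(\Delta_e,\pi_e(v))>B_2$ can never hold for $e\subset T^\hull_{u,G}$, making the lemma vacuous in a way that would not serve its later application in Lemma~\ref{L:lengths outside} (where the dichotomy --- either small projection distance or $T^\hull_{u,G}$ is a single edge --- is genuinely needed). What large $d(\Delta_e,\pi_e(v))$ actually forces is that $\hull_G\cap\hull_u$, which by hull-typeness of $t,t'$ and convexity crosses $\widetilde\alpha_e$, is \emph{trapped} in a bounded region around $\widetilde\alpha_e$: many preimage arcs of $v$ near $\widetilde\alpha_e$ pen it in, so it cannot cross any other $\widetilde\alpha_{e'}$. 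Consequently no vertex other than $t,t'$ can be hull type, giving $T^\hull_{u,G}=e$ with both endpoints still hull type.

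You also elide a technical device the paper needs to make the intersection-number count work: the two boundary geodesics $\delta_1,\delta_2$ of $\hull_G$ crossing $\widetilde\alpha_e$ may not embed in $A_e$, so one passes to a finite cover $\widetilde A_e\to A_e$ of degree $n$ (uniform over all $e$, since there are finitely many $G$--orbits of edges) on which the relevant region of $\mathbb H^2$ embeds, then converts the large annular distance downstairs into a definite number of crossings of $\delta_1,\delta_2$ by each lift of $v$ upstairs, with crossings on both sides of the core of $\widetilde A_e$. That count is what produces the trapping region, and it is where the uniformity of $B_2$ genuinely comes from; it is not simply $B_0$ plus an absolute constant as your last paragraph suggests.
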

\begin{remark} Note that since $B_2 \geq B_0$, it follows that $d(\Delta_e,\pi_e (v)) > B_2$ implies $\pi_e (v) \neq \emptyset$ by Corollary~\ref{C:bounded projection}.  Similarly, if $d(\Delta_t,\pi_t (v)) > B_2$ then $\pi_t (v) \neq \emptyset$.
\end{remark}

\begin{proof} We start with the proof of part \eqref{I:vertex bound}, which is more direct.  For this, it suffices to take any $B_2 \geq B_0$.  To see this, suppose $u \subset \C^s(S^z)$ is any multicurve, $v = \Phi(u)$, and $t \in T^\hull_{u,G}$ is a vertex with valence at least $2$.  This means that $\hull_u \cap \hull_G$ must intersect two distinct components $\widetilde \alpha_e,\widetilde \alpha_{e'} \subset p^{-1}(\alpha)$, where $e,e'$ are adjacent to $t$.  Let $\widetilde \gamma$ be a geodesic arc from $\widetilde \alpha_e$ to $\widetilde \alpha_{e'}$ contained in $\hull_u \cap \hull_G$.  Observe that $\widetilde \gamma$ is therefore disjoint from $p^{-1}(v)$, and hence the image, $\gamma$, in $Y_t$ is disjoint from the intersection of $v$ with $Y_t$.  But then, every arc and curve in $\pi_t(v)$ is disjoint from $\gamma$ which implies $\pi_t(v) \subset \Delta_t$.  
	This means that $d(\Delta_t,\pi_t (v)) \leq \diam(\Delta_t) \leq B_0 \leq B_2$.  Thus, any vertex of $T^\hull_{u,G}$ with $d(\Delta_t,\pi_t (v)) > B_2$ must have valence $1$, proving \eqref{I:vertex bound}.
	
	We now explain part \eqref{I:edge bound}.   Fix an edge $e \subset T_G$ and let $\delta_1,\delta_2$ be the two geodesics in $\partial \hull_G$ meeting the geodesic $\widetilde{\alpha}_e$ dual to $e$.  The basic idea is that if $d(\Delta_e, \pi_e(v))$ is large, then there must be many geodesics of $p^{-1}(v)$ that cross both $\delta_1$ and $\delta_2$ and $\widetilde \alpha_e$, so that for any $u$ with $\Phi(u) = v$, a component of $\hull_u \cap \hull_G$ that meets $\widetilde \alpha_e$ is trapped in a bounded region.  We now proceed to the proof, and refer the reader to Figure~\ref{F:twisting figure} to aid in the argument.
	
	First observe that there are at most four edges $e'$ adjacent to $e$ such that at least one of $\delta_1$ or $\delta_2$ crosses the geodesic $\widetilde \alpha_{e'}$.  Write $\widetilde \alpha_i^j \subset p^{-1}(\alpha)$ for the geodesics corresponding to these edges so that $\delta_i$ non-trivially intersects $\widetilde \alpha_i^j$ for $i,j = 1,2$.

	Next, choose an $n$--sheeted covering $p_e \colon \widetilde A_e \to A_e$, for some $n > 0$, so that the projection $\mathbb H^2 \to \widetilde A_e$ is injective on the region bounded by $\delta_1$ and $\delta_2$ (which contains $\hull_G$)  as well as on each $\widetilde{\alpha}_i^j$ for $i,j =1,2$.  We denote the images of the $\delta_i$ and $\widetilde \alpha_i^j$ in $\widetilde A_e$ by the same name. We also use $p_e$ to denote the induced map between arc graphs $p_e \colon \A(\widetilde{A}_e) \to \A(A_e)$.  The core curve of $\widetilde A_e$ is an $n$--fold cover of the core curve $\alpha_e$ of $A_e$, and we denote it by $\alpha_e^n$.  See Figure~\ref{F:twisting figure}.  Observe that the degree $n$ necessary to arrange that all of these things happen can be chosen to depend only on the $G$--orbit of $e$, and since there are only finitely many $G$--orbits of edges in $T_G$, we can in fact assume that $n$ is independent of $e$.
	
	\begin{center}
		\begin{figure}[htb]
			\begin{tikzpicture}
				\node at (0,0) {\includegraphics[width=10cm]{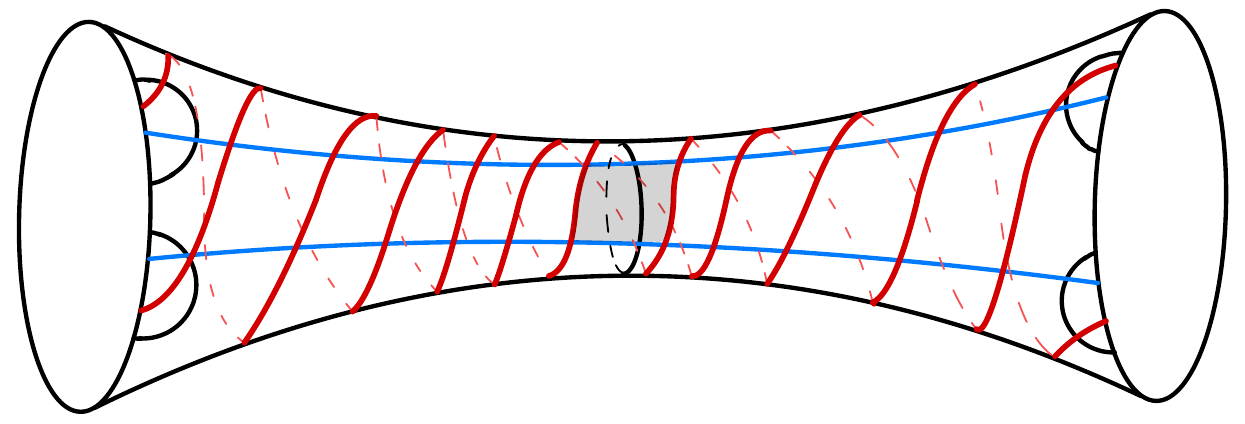}};
				\draw[-\tipss] (-2,1) -- (-2.6,.5);
				\node at (-1.8,1.1) {$\delta_1$};
				\draw[-\tipss] (-2.4,-1) -- (-2.5,-.26);
				\node at (-2.3,-1.2) {$\delta_2$};
				\draw[-\tipss] (4.2,.7) -- (3.61,.7);
				\node at (4.4,.6) {$\widetilde \alpha_1^2$};
				\draw[-\tipss] (4.2,-.7) -- (3.55,-.85);
				\node at (4.45,-.6) {$\widetilde \alpha_2^2$};
				\draw[-\tipss] (-4.2,.7) -- (-3.5,.9);
				\node at (-4.4,.6) {$\widetilde \alpha_1^1$};
				\draw[-\tipss] (-4.2,-.7) -- (-3.48,-.7);
				\node at (-4.4,-.6) {$\widetilde \alpha_2^1$};
				\node at (-2.7,1.3) {$\widetilde A_e$};
				\node at (0,-.8) {$\alpha_e^n$};
				\draw[-\tipss] (1.2,1.2) .. controls (.9,1.2) and (.4,.7)  .. (.25,.05);
				\node at (1.9,1.2) {$\hull_u \cap \hull_G$};
				\draw[-\tipss] (1,-1) -- (1.16,-.6);
				\node at (1,-1.2) {$\widetilde v_1$};
				\draw[-\tipss] (1.8,-1.2) -- (2.05,-.73);
				\node at (1.8,-1.4) {$\widetilde v_2$};
			\end{tikzpicture}
			\caption{A schematic of $\widetilde A_e$.} \label{F:twisting figure}
		\end{figure}
	\end{center}
	
	Next, observe that for any arc $\gamma \in \A(\widetilde A_e)$, we have
	\[ i(p_e(\gamma),p_e(\delta_i)) \leq n \, (i(\gamma,\delta_i)+1)\]
	for $i=1,2$.  Since distances in arc graphs of annuli are given by intersection number plus $1$, it follows that
	\[ d(p_e(\gamma),p_e(\delta_i)) \leq n \, d(\gamma,\delta_i) + 1,\]
	for $i=1,2$.  In particular, note that 
	\[ \begin{array}{rcl} d(\pi_e (v),\Delta_e) & = & d(\pi_e (v),p_e(\delta_1) \cup p_e(\delta_2)) \\
		& \leq & n \, d(p_e^{-1}(\pi_e (v)),\delta_1 \cup \delta_2)+1\\
		& \leq & n \, (d(p_e^{-1}(\pi_e (v)),\delta_i) + 1)+1, \end{array}\]
	for each $i=1,2$, since $d(\delta_1,\delta_2) = 1$ in $\A(\widetilde A_e)$.
	
	We set $B_2 \geq \max\{B_0,10n+1\}$.  Suppose $u \subset \C^s(S^z)$ is any multicurve for which $T^\hull_{u,G}$ contains $e$, $v = \Phi(u)$, and $d(\pi_e (v),\Delta_e) > B_2$.  Since the diameter of $p_e^{-1}(\pi_e (v))$ is $1$, it follows that for any $\widetilde v \in p_e^{-1}(\pi_e (v))$ and $i=1,2$, we have
	\[ d(\widetilde v,\delta_i)  \geq  d(p_e^{-1}(\pi_e (v)),\delta_i) - 1.\] 
 Combining this with the inequalities above gives
	\[ d(\widetilde v,\delta_i) \geq  \frac1n (d(\pi_e (v),\Delta_e) -1) -2 > \frac1n(B_2-1)-2 \geq \frac1n 10n-2 \geq 8. \]
	Thus, any $\widetilde v \in p_e^{-1}(\pi_e (v))$ intersects each of $\delta_1$ and $\delta_2$ in at least $7$  points in $\widetilde A_e$. Since  $\widetilde v$ and $\delta_i$ are geodesics in $\widetilde A_e$, the difference in the number of intersection points on the two sides of the core geodesic $\alpha_e^n$ is at most $1$.
	It follows that there are at least $3$ points of intersection of $\widetilde v$ with each of $\delta_1$ and $\delta_2$ on either side of $\alpha_e^n$.
	
	We now see that for any $\widetilde v \in p_e^{-1}(\pi_e (v))$ there are arcs of intersection of $\widetilde v$ in $\widetilde A_e$ with the region bounded by $\delta_1$ and $\delta_2$ that contains $\hull_G$, on both sides of $\alpha_e^n$.  Moreover, there are such segments that meet $\delta_1$ and $\delta_2$ between the geodesics $\{\widetilde{\alpha}_i^j\}_{i,j}$ (since once $\widetilde v$ meets $\widetilde{\alpha}_i^j$ it can intersect $\delta_i$ in at most one more point), and therefore, each segment is contained in the image of the corresponding $Z_t$, for $t$ an endpoint of $e$.
	
	Since $T^\hull_{u,G}$ contains $e$, the projection of $\hull_u \cap \hull_G$ to $\widetilde A_e$ necessarily intersects $\alpha_e^n$.  It is therefore contained in the region between two of the segments of any $\widetilde v$ described above.  In fact, it follows that there are $\widetilde v_1,\widetilde v_2 \in p_e^{-1}(\pi_e (v))$ such that $\hull_u \cap \hull_G$ is contained in the region bounded by these two geodesics together with $\delta_1$ and $\delta_2$; see Figure~\ref{F:twisting figure}.  This implies that $T^\hull_{u,G} = e$, as required.  This completes the proof of \eqref{I:edge bound}, and hence the lemma.
\end{proof}

The second ingredient is the following bound on paths that are not contained in the subsets  $\mathcal E(v,B)$ and $\mathcal V(v,B)$ from  \S\ref{S:projections}.   Recall that $E$ denotes the number of $G/G_0$--orbits of edges in $\sigma_0$.

\begin{lemma} \label{L:lengths outside} Let $B_2$ be as in Lemma~\ref{L:dead ends} and suppose $u \subset \C^s(S^z)$ is any multicurve, $v = \Phi(u)$, and $\gamma \subset T^\hull_{u,G}$ is an embedded edge path such that $p_0(\gamma) \subset \sigma_0$ is disjoint from $\mathcal E(v,B_2) \cup \mathcal V(v,B_2)$.  Then the length of $\gamma$ is at most $2E+2$.
\end{lemma}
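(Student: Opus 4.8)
The plan is to use Lemma~\ref{L:dead ends} to eliminate the possibility of large projections along $\gamma$, combine this with Lemma~\ref{L:ubiquitous} and the hypothesis to force the ``interior'' edges of $\gamma$ to carry \emph{empty} projections, translate this into the geometric statement that $\hull_u$ contains the corresponding components of $p^{-1}(\alpha)$, and finish with a pigeonhole count over the $G$--orbits of edges of $T_G$.

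Write $\gamma = t_0 e_1 t_1 \cdots e_N t_N$ and $v = \Phi(u)$; we may assume $N \geq 3$, since otherwise $N \leq 2 \leq 2E+2$. First I would clear away the ``dead ends.'' If some edge $e$ of $\gamma$ had $d(\Delta_e, \pi_e(v)) > B_2$, then Lemma~\ref{L:dead ends}\eqref{I:edge bound} would force $T^\hull_{u,G} = e$, contradicting $N \geq 3$; hence $d(\Delta_e, \pi_e(v)) \leq B_2$ for every edge of $\gamma$. Each interior vertex $t_i$ with $1 \leq i \leq N-1$ has valence at least $2$ in $T^\hull_{u,G}$, so Lemma~\ref{L:dead ends}\eqref{I:vertex bound} gives $d(\Delta_{t_i}, \pi_{t_i}(v)) \leq B_2$. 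Since $d(\Delta_{p_0(e)}, \pi_{p_0(e)}(v)) = d(\Delta_e, \pi_e(v))$ and likewise for vertices, and since $p_0(\gamma)$ is disjoint from $\mathcal E(v,B_2) \cup \mathcal V(v,B_2)$, it follows that every edge $e$ of $\gamma$ is a non-twist edge or satisfies $\pi_e(v) = \emptyset$, and every interior vertex $t_i$ is an identity vertex or satisfies $\pi_{t_i}(v) = \emptyset$.

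Next I would show $i(v, \alpha_{e_i}) = 0$ for $2 \leq i \leq N-1$. If $\pi_{t_i}(v) = \emptyset$, then $v$ does not essentially meet $Y_{t_i}$, so it is disjoint from its boundary curves and $\pi_{e_i}(v) = \pi_{e_{i+1}}(v) = \emptyset$. So fix $i$ with $2 \leq i \leq N-1$: if $e_i$ is a twist edge then $\pi_{e_i}(v) = \emptyset$ directly; if $e_i$ is not a twist edge, Lemma~\ref{L:ubiquitous} furnishes a pseudo-Anosov endpoint of $e_i$, necessarily one of the interior vertices $t_{i-1}, t_i$, which therefore has empty projection, whence $\pi_{e_i}(v) = \emptyset$ by the previous sentence. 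In either case $i(v,\alpha_{e_i}) = 0$. Now because $t_{i-1}, t_i \in T^\hull_{u,G}$, the set $\hull_G \cap \hull_u$ meets both $\widetilde Y^\circ_{t_{i-1}}$ and $\widetilde Y^\circ_{t_i}$, hence crosses $\widetilde\alpha_{e_i}$ by convexity; were this crossing transverse, $\alpha_{e_i} = p(\widetilde\alpha_{e_i})$ would cross $p(\partial\hull_u) \subseteq \Phi(u) = v$, contradicting $i(v,\alpha_{e_i})=0$. Therefore $\widetilde\alpha_{e_i} \subseteq \hull_u$; equivalently, the endpoints of $\widetilde\alpha_{e_i}$ lie in the limit set of $K_u$, so the primitive element of $\pi_1 S$ with axis $\widetilde\alpha_{e_i}$ lies in $K_u$ up to conjugacy and $\alpha_{e_i}$ is freely homotopic into $\Phi(U)$.

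To finish, suppose for contradiction that $N > 2E+2$. Then among $e_2, \dots, e_{N-1}$ there are more than $2E$ edges of $T_G$, and since $T_G$ has only $E$ orbits of edges under $G$, some orbit contains three of them, say $e_a, e_b, e_c$ with $a < b < c$ and $e_b = g(e_a)$ for some $g \in G$. Since $g(\hull_u) = \hull_{g(u)}$ and $g$ carries the geodesic $\widetilde\alpha_{e_a}$ to $\widetilde\alpha_{e_b}$, the conclusion of the previous paragraph would put $\widetilde\alpha_{e_a}$ inside both $\hull_u$ and $g^{-1}(\hull_u) = \hull_{g^{-1}(u)}$; propagating this along the axis of $g$ on $T_G$, using the free cocompact $G$--action of Lemma~\ref{L:free action of G} and the equivariance $\Delta_{g(e)} = f^{\phi(g)}(\Delta_e)$ of Lemma~\ref{L:equivariant decoration}, yields a configuration of nested convex subsets of $\mathbb H^2$ that cannot occur, and hence $N \leq 2E+2$. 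I expect this last step to be the main obstacle: pinning down the exact constant $2E+2$ amounts to ruling out three ``deep'' edges of a single $G$--orbit along an embedded path in $T^\hull_{u,G}$, and I anticipate the key point to be that translating $\hull_u$ by the relevant element of $G$ must move $\widetilde\alpha_{e_a}$ off $\hull_u$, in direct conflict with the containment $\widetilde\alpha_{e_a} \subseteq \hull_u$ established above.
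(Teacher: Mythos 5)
Your opening observations track the paper closely: eliminate dead ends with Lemma~\ref{L:dead ends} to conclude that every interior edge/vertex of $\gamma$ has projection distance at most $B_2$, combine this with the disjointness hypothesis and Lemma~\ref{L:ubiquitous} to force $\pi_{e_i}(v) = \emptyset$ on the interior edges. Your geometric observation that $\widetilde\alpha_{e_i} \subseteq \hull_u$ for interior edges is also sound: once $\pi_{e_i}(v) = \emptyset$ the geodesic $\widetilde\alpha_{e_i}$ is disjoint from $p^{-1}(v) \supseteq \partial\hull_u$, and since $\hull_G \cap \hull_u$ crosses it, it must lie in $\hull_u$. Up to here the argument is correct.

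The gap is exactly where you flag it. Having $\widetilde\alpha_{e_a} \subseteq \hull_u$ and $\widetilde\alpha_{e_a} \subseteq g^{-1}(\hull_u) = \hull_{g^{-1}(u)}$ is \emph{not} a contradiction: both are convex sets, and a geodesic of $p^{-1}(\alpha)$ can perfectly well lie in both. Finding three orbit-equivalent edges and hoping for a ``nesting'' obstruction does not close the argument, because nothing forces $g$ to move $\hull_u$ in a way incompatible with the containment. Crucially, your argument never invokes the purely pseudo-Anosov hypothesis on $G$, and some such input is unavoidable: the statement would be false if $g$ could stabilize a curve.

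The paper's route is genuinely different from what you sketch. After the dead-end clean-up and the conclusion that $\pi_e(v) = \emptyset$ on the interior edges, it pigeonholes to find just \emph{two} oriented edges $e_0, e_1$ of the interior subpath in the same $G$--orbit, say $e_1 = g(e_0)$. It then uses the embedding $T_G \hookrightarrow \hull_G$ to build a path $\nu_1$ from $\widetilde\alpha_{e_0}$ to $\widetilde\alpha_{e_1}$, shows that $\nu_1$ can be homotoped off $p^{-1}(v)$, and (in two cases depending on whether some interior vertex has nonempty projection) produces a specific simple closed curve $w$---either a component of $v$, a surgery on an arc of $\pi_t(v)$ for an identity vertex $t$, or a component $\alpha_i$ of $\alpha$ disjoint from $p(\nu_1)$---such that $p^{-1}(w)$ is $g$--invariant and disjoint from $\nu_1$. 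Concatenating translates $g^n(\nu_1'')$ gives a bi-infinite $g$--invariant path trapped in a single component of $\mathbb{H}^2 \setminus p^{-1}(w)$, forcing $g$ to stabilize a vertex of $\C(S^z)$ over $w$, contradicting that $G$ is purely pseudo-Anosov. This $g$--invariant-path-plus-pure-pA argument is the missing ingredient in your sketch, and I don't see how to substitute a purely convexity-based contradiction for it.

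Two small additional remarks. First, your auxiliary claim that the primitive $\pi_1 S$--element with axis $\widetilde\alpha_{e_i}$ lies in $K_u$ (up to conjugacy) would need some justification---it follows because $\alpha_{e_i}$ is then a curve inside the component $p(\hull_u^\circ)$ of $S\setminus v$ and $K_u \cong \pi_1(\Phi(U))$, but it is not an immediate consequence of containment in a convex hull. Second, your pigeonhole uses $2E$ to find three edges in one orbit; the paper only needs two edges in one orbit of \emph{oriented} edges, which is why the relevant count is $2E$ there. The bound $2E+2$ in the statement matches the paper's argument (interior subpath of length $> 2E$ after removing the two outer edges), not the three-edge pigeonhole.
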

\begin{proof} Suppose that the length of $\gamma$ is greater than $2E+2$ and let $\gamma_0 \subset \gamma$ be the subpath obtained by deleting the first and last edge.  Note this path contains no valence $1$ vertices of $T^\hull_{u,G}$, and consequently by Lemma~\ref{L:dead ends}, every vertex $t$ of $\gamma_0$ is either an identity (non-pseudo-Anosov) vertex or has $\pi_t (v) = \emptyset$. Since $T^\hull_{u,G}$ cannot be a single edge, another application of Lemma~\ref{L:dead ends} implies that for every edge $e$ of $\gamma_0$, either $e$ is a non-twist edge or $\pi_e (v) = \emptyset$.  In fact, we claim that something stronger holds for edges.
	
	\begin{claim} For every edge $e$ of $\gamma_0$, $\pi_e (v) = \emptyset$.
	\end{claim}
	\begin{proof} If $e$ is a twist edge, then we have already noted that $\pi_e (v) = \emptyset$, so it suffices to assume $e$ is a non-twist edge.  In this case, at least one of its endpoints, call it $t$, is a pseudo-Anosov vertex and hence $\pi_t (v) = \emptyset$.  Since $v$ cannot intersect the core curve $\alpha_e \subset A_e$ without intersecting $Y_t$, it follows that $\pi_e (v) = \emptyset$.
	\end{proof}

	Since the length of $\gamma_0$ is greater than $2E$, there must be a pair of edges $e_0,e_1$ of $\gamma_0$ so that $e_0$ and $e_1$---viewed as oriented edges, oriented by an orientation on $\gamma_0$---differ by an element $g \in G$.  Without loss of generality, suppose $e_0$ is the first of these edges encountered along $\gamma_0$.  Recall (see \S\ref{S:G0 quotient}) that we have embedded $T_G$ into $\hull_G \subset \mathbb H^2$, $G$--equivariantly on the vertices.  Using this, we let $\gamma_1$ be the subsegment of $\gamma_0$ that begins with $e_0$ and ends with $e_1$.  Let $\nu_1 \subset \gamma_1$ be the subpath starting from $e_0 \cap \widetilde \alpha_{e_0}$ and ending at $e_1 \cap \widetilde \alpha_{e_1}$.
	
	\begin{claim} The path $\nu_1$ is homotopic, rel endpoints, in $\mathbb H^2$ to a path $\nu_1'$ which is disjoint from $p^{-1}(v)$.
	\end{claim}
	\begin{proof} Since $\pi_e (v) = \emptyset$ for every edge $e$ of $\gamma_0$, it follows that $p^{-1}(v)$ is disjoint from $\widetilde \alpha_e$ for every such edge $e$. Let $t \in \nu_1$ be any vertex and let $e,e' \subset \gamma_1$ be the edges for which $t = e \cap e'$ with $e$ appearing before $e'$.  Let $\nu_t \subset \nu_1$ be the subpath from $e \cap \widetilde \alpha_e$ to $e' \cap \widetilde \alpha_{e'}$.  Either we can homotope $\nu_t$, rel endpoints, to a path disjoint from $p^{-1}(v)$, or else some component of $\widetilde v \subset p^{-1}(v)$ separates $\widetilde \alpha_e$ from $\widetilde \alpha_{e'}$.  The latter situation cannot happen, however, because then $\hull_u \cap \hull_G$ would have to lie on one side or the other of $\widetilde v$, contradicting the fact that $e$ and $e'$ are in $T^\hull_{u,G}$.  We can then combine the homotopies for each subpath $\nu_t$ associated to each vertex $t$ of $\nu_1$, producing the required homotopy for $\nu_1$.
	\end{proof}
	
	The rest of the proof splits into two cases.
	
	\medskip
	
	\noindent {\bf Case 1}. There is some vertex $t$ in $\nu_1$ for which $\pi_t (v) \neq \emptyset$.
	
	\medskip
	
	Note that $\pi_t (v) \neq \emptyset$ implies $t$ is an identity vertex.  If $\pi_t (v)$ contains a simple closed curve, then denote it by $w$ and note that it is a component of $v$.  Otherwise, $\pi_t (v)$ is a collection of arcs with endpoints on the boundary of $Y_t$. In this case, let $a$ be an arc of $\pi_t(v)$ and $c \subset \partial Y_t \subset \alpha$ be the component(s) of $\partial Y_t$ containing the endpoints of $a$. The boundary of a small neighborhood of $a \cup c$ contains either one or two essential curves on $Y_t$; see \cite[{\S 2}]{MM2}. Define $w$ to be one of these curves. 
	
	In either case,  by further homotopy if necessary, we may assume that $\nu_1'$ is disjoint from $p^{-1}(w)$.  This is obvious if $w \subset v$, while for the other case, we argue as follows. 
	Since $\pi_e (v) = \emptyset$ for every edge $e$ of $\gamma_0$, no component of $c$ is in $p(\widetilde \alpha_e)$ for any edge $e$ of $\gamma_0$. Hence, $p(\nu_1')$ is disjoint from both $c$ and the arc $a$, ensuring $\nu_1'$ is disjoint from $p^{-1}(w)$.
	
	Now, the element $g \in G$ maps $\widetilde \alpha_{e_0}$ to $\widetilde \alpha_{e_1}$.  Let $\nu_1''$ be the path obtained by concatenating $\nu_1'$ with an arc of $\widetilde \alpha_{e_1}$ from the terminal endpoint of $\nu_1'$ to the $g$--image of the initial endpoint.  Since $\widetilde \alpha_{e_1}$ is disjoint from $p^{-1}(w)$, it follows that $\nu_1''$ is disjoint from $p^{-1}(w)$.
	Now set
	\[ \widetilde \nu = \bigcup_{n \in \mathbb Z} g^n(\nu_1''), \] 
	which is a bi-infinite, $g$--invariant path.  Furthermore, since $w$ is a simple closed curve contained in an identity complementary region, $p^{-1}(w)$ is invariant by $g$ as well.  In particular,  $\widetilde \nu$ is also disjoint from $p^{-1}(w)$.  That is, $\widetilde \nu$ is contained in a single component of $\mathbb H^2 \ssm p^{-1}(w)$, and therefore, $g$ is contained in the stabilizer of this set.  The closure of this component is $\hull_{u_0}$ for some curve $u_0$ with $\Phi(u_0) = w$, and therefore $g$ fixes $u_0$, contradicting the fact that $G$ is purely pseudo-Anosov.  This contradiction shows that Case 1 cannot happen.
	
	\medskip
	
	\noindent {\bf Case 2}.  For every vertex $t$ of $\nu_1$ we have $\pi_t (v) = \emptyset$.
	
	\medskip
	
	Under these assumptions, we note that $v$ is disjoint from every complementary subsurface $Y_j$ that $p(\nu_1)$ intersects.  In particular, there must be some curve $\alpha_i \in \alpha$ that is disjoint from $p(\nu_1)$.  We can then build $\nu_1''$ and $\widetilde \nu$ as we did above, but in this case, the bi-infinite path $\widetilde \nu$ is disjoint from $p^{-1}(\alpha_i)$ instead of $p^{-1}(w)$.  Since $p^{-1}(\alpha_i)$ is invariant by $g$, we again find that  $\widetilde \nu$ is contained in a set $\hull_{u_0}$ where $u_0$ is a curve with $\Phi(u_0) = \alpha_i$.  As before, this implies  $g$ fixes $u_0$, which is another contradiction.  Therefore, Case 2 cannot happen either.  Since these two cases account for all possibilities, we see that the assumption that $\gamma$ had length greater than $2E+2$ was impossible. 
\end{proof}

The lemma above uniformly bounds the length of any subsegment of $p_0(T_{u,G}^\hull)$ that  is outside of the set $\mathcal E(v,B_2) \cup \mathcal V(v,B_2)$. Combining this with the fact that  $\mathcal E(v,B_2)$ and $\mathcal V(v,B_2)$ are finite collections of uniformly bounded diameter sets (Lemma \ref{L:bounded clusters}), we can produce a uniform bound of $\diam(p_0(T_{u,G}^\hull))$.

\begin{proof}[Proof of Lemma~\ref{L:hull subtree bound}]  Recall that $E$ and $V$ respectively denote the number of $G/G_0$--orbits of edges and vertices in $\sigma_0$.
	Let $M > 0$ be the constant from Lemma~\ref{L:bounded clusters} for $B = B_2$ and set $D_2 = (E+V)(2M+2E+3)$. 
	
	Recall that $\mathcal E(v,B_2) \cup \mathcal V(v,B_2) \subset \sigma_0$ is a union of at most $E+V$ sets of diameter at most $M$ by Lemma~\ref{L:bounded clusters}.  If $T_{u,G}^\hull$ does not intersect any of these sets, then Lemma~\ref{L:lengths outside} says $\diam(p_0(T_{u,G}^\hull)) \leq 2E+2\leq D_2$. Otherwise,  let $L \leq E+V$ be the number of these sets that non-trivially intersect $p_0(T_{u,G}^\hull)$, and let $X_1,\ldots, X_L$ be these sets (whose diameters are thus at most $M$).  According to Lemma~\ref{L:lengths outside}, the maximal length of an edge path in $p_0(T_{u,G}^\hull)$ outside the union of these sets is at most $2E+2$. Therefore the maximum distance from any point of $p_0(T_{u,G}^\hull)$ to $X_1 \cup \ldots \cup X_L$ is at most $2E+3$.  The collection $\{N_{M+2E+3}(X_j)\}_{j=1}^L$ is then a connected cover of $p_0(T_{u,G}^\hull)$, where each  set has diameter at most $2M+2E+3$. Thus, the diameter of $p_0(T_{u,G}^\hull)$ is at most $L(2M+2E+3) \leq (E+V)(2M+2E+3) =  D_2$, as required.
	\end{proof}

\subsection{Combining bounds}

The proof of Proposition~\ref{P:weak bounded diameter} is now straightforward.\\

\noindent
{\em {\bf Proposition~\ref{P:weak bounded diameter}.} \Pweakbound}

\bigskip

\begin{proof} Observe that $T_u \cap T_G$ is a union of $T^\hull_{u,G}$ and some set of parallel type subtrees, each of which is connected by an edge to $T^\hull_{u,G}$.  Therefore, by Lemmas~\ref{L:bound_on_parallel_type_diam} and \ref{L:hull subtree bound}, the diameter of $p_0(T_u \cap T_G)$ is at most $2D_1+D_2+2$.
\end{proof}

\bibliographystyle{alpha}
\bibliography{main}

\end{document}